\newtheorem{Theorem}{Theorem}[section]
\newtheorem{Proposition}[Theorem]{Proposition}
\newtheorem{Lemma}[Theorem]{Lemma}
\newtheorem{Corollary}[Theorem]{Corollary}
\theoremstyle{definition}
\newtheorem{Definition}[Theorem]{Definition}
\newtheorem{Remark}[Theorem]{Remark}
\newcommand{\bTheorem}[1]{
\begin{Theorem} \label{T#1} }
\newcommand{\eT}{\end{Theorem}}
\newcommand{\bProposition}[1]{
\begin{Proposition} \label{P#1}}
\newcommand{\eP}{\end{Proposition}}
\newcommand{\bLemma}[1]{
\begin{Lemma} \label{L#1} }
\newcommand{\eL}{\end{Lemma}}
\newcommand{\bCorollary}[1]{
\begin{Corollary} \label{C#1} }
\newcommand{\eC}{\end{Corollary}}
\newcommand{\bRemark}[1]{
\begin{Remark} \label{R#1} }
\newcommand{\eR}{\end{Remark}}
\newcommand{\bDefinition}[1]{
\begin{Definition} \label{D#1} }
\newcommand{\eD}{\end{Definition}}
\newcommand{\K}{\mathcal{K}}
\newcommand{\bU}{\bm{U}}
\newcommand{\tvE}{\tilde{E}}
\newcommand{\vrh}{\vr_h}
\newcommand{\vmh}{\bm{m}_h}
\newcommand{\tvm}{\tilde{\bm{m}}}
\newcommand{\bfphi}{\boldsymbol{\varphi}}
\newcommand{\bfPhi}{\boldsymbol{\Phi}}
\newcommand{\Eh}{E_h}
\newcommand{\ds}{\,\mathrm{d}S(x)}
\newcommand{\bFormula}[1]{
\begin{equation} \label{#1}}
\newcommand{\eF}{\end{equation}}
\newcommand{\grid}{\mathcal{T}}
\newcommand{\vuh}{\vu_h}
\newcommand{\mh}{\vc{m}_h}
\newcommand{\intSh}[1] {\int_{\sigma} #1 \ds }
\newcommand{\Ov}[1]{\overline{#1}}
\newcommand{\av}[1]{ \overline{#1}}
\newcommand{\aleq}{\stackrel{<}{\sim}}
\newcommand{\vr}{\varrho}
\newcommand{\tvr}{\widetilde \vr}
\newcommand{\vt}{\vartheta}
\newcommand{\vu}{\bm{u}}
\newcommand{\vm}{\bm{m}}
\newcommand{\vc}[1]{{\bm #1}}
\newcommand{\Div}{{\rm div}_x}
\newcommand{\Grad}{\nabla_x}
\newcommand{\dx}{\,{\rm d} {x}}
\newcommand{\dt}{\,{\rm d} t }
\newcommand{\dxdt}{\dx \ \dt}
\newcommand{\intO}[1]{\int_{\Omega} #1 \ \dx}
\newcommand{\intOh}[1]{\int_{\Omega_h} #1 \ \dx}
\renewcommand{\vm}{\bm{m}}
\renewcommand{\vu}{\bm{u}}
\def\softd{{\leavevmode\setbox1=\hbox{d}%
          \hbox to 1.05\wd1{d\kern-0.4ex{\char039}\hss}}}
\definecolor{Cgrey}{rgb}{0.85,0.85,0.85}
\definecolor{Cblue}{rgb}{0.50,0.85,0.85}
\definecolor{Cred}{rgb}{1,0,0}
\definecolor{fancy}{rgb}{0.10,0.85,0.10}
\newcommand\Cbox[2]{%
    \newbox\contentbox%
    \newbox\bkgdbox%
    \setbox\contentbox\hbox to \hsize{%
        \vtop{
            \kern\columnsep
            \hbox to \hsize{%
                \kern\columnsep%
                \advance\hsize by -2\columnsep%
                \setlength{\textwidth}{\hsize}%
                \vbox{
                    \parskip=\baselineskip
                    \parindent=0bp
                    #2
                }%
                \kern\columnsep%
            }%
            \kern\columnsep%
        }%
    }%
    \setbox\bkgdbox\vbox{
        \color{#1}
        \hrule width  \wd\contentbox %
               height \ht\contentbox %
               depth  \dp\contentbox
        \color{black}
    }%
    \wd\bkgdbox=0bp%
    \vbox{\hbox to \hsize{\box\bkgdbox\box\contentbox}}%
    \vskip\baselineskip%
}
\begin{document}


\title{
Computing oscillatory solutions of the Euler system via $\K$-convergence}

\author{Eduard Feireisl\thanks{The research of E.F. and B.S. leading to these results has received funding from the
Czech Sciences Foundation (GA\v CR), Grant Agreement
18--05974S. The Institute of Mathematics of the Czech Academy of Sciences is supported by RVO:67985840.\newline
\hspace*{1em} $^\spadesuit$M.L. has been funded by the Deutsche Forschungsgemeinschaft (DFG, German Research Foundation) - Project number 233630050 - TRR 146 as well as by  TRR 165 Waves to Weather. \newline
\hspace*{1em}$^\diamondsuit$Y.W.~acknowledges the support provided by President Foundation of
CAEP(YZJJLX2016009). \newline
This research has been also partially supported by the Sino-German Center funded by the
Deutsche Forschungsgemeinschaft and National Natural Science Foundation of China. M.L. and B.S. would like to thank
Jiequan Li and Beijing
Institute of Applied
Physics and Computational Mathematics for their hospitality. }
$^{, \clubsuit}$\and M\' aria Luk\' a\v cov\' a -- Medvi\softd ov\' a$^{\spadesuit}$ \and
Bangwei She$^{*}$ \and Yue Wang$^{\diamondsuit}$
}

\date{}

\maketitle

\bigskip

\centerline{$^*$ Institute of Mathematics of the Czech Academy of Sciences}
\centerline{\v Zitn\' a 25, CZ-115 67 Praha 1, Czech Republic}
\centerline{feireisl@math.cas.cz, she@math.cas.cz}

\bigskip

\centerline{$^\clubsuit$ Institute of Mathematics, TU Berlin}
\centerline{Strasse des 17. Juni, Berlin, Germany}

\bigskip
\centerline{$^\spadesuit$ Institute of Mathematics, Johannes Gutenberg-University Mainz}
\centerline{Staudingerweg 9, 55 128 Mainz, Germany}
\centerline{lukacova@uni-mainz.de}

\bigskip
\centerline{$^\diamondsuit$ Institute of Applied Physics and Computational Mathematics}
\centerline{Fenghao East road 2, Haidian, Beijing, China}
\centerline{wang\_yue@iapcm.ac.cn}

\begin{abstract}

We develop a method to compute effectively the Young measures associated to sequences of
numerical solutions of the compressible Euler system. Our approach is based on the concept of $\mathcal{K}-$convergence
adapted to sequences of parametrized measures. The convergence is strong  in space and time (a.e.~pointwise or in certain $L^q$ spaces) whereas the
measures converge narrowly or in the Wasserstein distance to the corresponding limit.

\end{abstract}

{\bf Keywords:} Young measure,  dissipative solutions, $\K$-convergence, compressible Euler system, Wasserstein distance, finite volume methods, consistent approximate solutions

\tableofcontents

\section{Introduction}
\label{I}

The initial--boundary value problems to certain systems of nonlinear conservation laws are ill--posed in the class of weak solutions.
An iconic example is the \emph{Euler system} describing the time evolution of the density $\vr = \vr(t,x)$, the momentum
$\vm = \vm(t,x)$, and the energy $E = E(t,x)$ of a compressible inviscid fluid:
\begin{equation} \label{i1}
\begin{split}
\partial_t \vr + \Div \vm &= 0,\\
\partial_t \vm + \Div \left( \frac{\vm \otimes \vm}{\vr} \right) + \Grad p &= 0,\\
\partial_t E + \Div \left[ \left( E + p \right) \frac{\vm}{\vr} \right] &= 0.
\end{split}
\end{equation}
Here, $p$ is the pressure related to $\vr, \vm, E$ through a suitable equation of state. The fluid is confined to a spatial
domain $\Omega \subset R^d$, $d = 1,2,3$, on the boundary of which we impose the impermeability condition
\begin{equation} \label{i2}
\vm \cdot \vc{n}|_{\partial \Omega} = 0.
\end{equation}
The initial state of the system is given through initial conditions
\begin{equation} \label{i3}
\vr(0, \cdot) = \vr_0, \ \vm(0, \cdot) = \vm_0, \ E(0, \cdot) = {E}_0.
\end{equation}

The first numerical evidence that  indicated ill--posedness of the Euler system was presented  by Elling \cite{Elling}.  As proved later rigorously \cite{Chiod, ChiDelKre, ChiKre, ChKrMaSwI, dlsz2, FKKM17}, the initial--boundary value problem \eqref{i1}--\eqref{i3} admits infinitely many weak solutions
on a given time interval $(0,T)$
for a rather vast class of initial data. Moreover, these solutions obey the standard entropy admissibility condition
\begin{equation} \label{i4}
\partial_t S + \Div \left( S \frac{\vm}{\vr} \right) \geq 0,
\end{equation}
where $S$ is the total entropy of the system.

The ill--posedness of the Euler system is intimately related to the lack of compactness of the set of functions satisfying
\eqref{i1} and \eqref{i4}.
Indeed bounded sequences of solutions may develop uncontrollable oscillations and/or concentrations. This
phenomenon is then naturally inherited by their numerical approximations, see e.g. Fjordholm, Mishra, and Tadmor \cite{FjKaMiTa, FjMiTa1}. This fact
motivated the renewed interest in the measure--valued (MV) solutions introduced in the context of the incompressible Euler system
by DiPerna and Majda \cite{Diperna_Majda}. The exact values of physical densities and fluxes, originally numerical functions of the
physical variables $(t,x)$, are replaced by a family of \emph{probability measures} (Young measure)
\[
(t,x) \mapsto \mathcal{V}_{t,x} \in \mathcal{P} \left( [\vr, \vm, S,E] \in R^{d + 3} \right).
\]
The values of the observable fields $[\vr, \vm, S, E]$ are interpreted as the expected values:
\[
\vr(t,x) = \left< \mathcal{V}_{t,x}; \tvr \right>,\ \vm(t,x) = \left< \mathcal{V}_{t,x}; \tvm \right>,\
S(t,x) = \left< \mathcal{V}_{t,x}; \widetilde{S} \right>,\
E(t,x) = \left< \mathcal{V}_{t,x}; \tvE \right>.
\]

In view of the observed fact that certain numerical schemes fail to provide convergent sequences
of approximate solutions, Fjordholm et al. \cite{FjKaMiTa, FjMiTa1} proposed a way how to compute the associated Young measure.
Note that, in the context of numerical simulations, the convergence towards the limit (exact) solution is required to be pointwise (a.e.). On the other hand, however, the theoretical studies available so far in the literature, see e.g.  \cite{FjKaMiTa, FjMiTa1}, provide only weak-$(^*)$ convergence in $(t,x)$.
This is of little practical interest as wildly oscillating output data are difficult to
analyze in such a way.

In the present paper, we propose a new method to compute the Young measures associated to sequences of numerical solutions
based on the concept of $\mathcal{K}$-convergence. We start by showing a remarkable property of the Euler system \eqref{i1} that
can be roughly stated as follows: either a consistent numerical approximation converges strongly (pointwise a.e.) or
its (weak) limit \emph{is not} a weak solution of the Euler system, see Section \ref{w}. In the context of numerical analysis, this property might be seen as a sharp version
of the well-known \emph{Lax-Wendroff theorem}. In view of these facts, the concept of measure--valued
or other kind of  generalized solution is necessary whenever the approximate sequence exhibits oscillations.

Following Balder \cite{Bald}, we associate to an approximate sequence $\{ \vrh, \vmh, S_h, \Eh \}_{h \searrow 0}$ the Young measure
\[
\mathcal{V}^h_{t,x} = \delta_{ [\vrh(t,x), \vmh(t,x), S_h, \Eh(t,x)] },
\]
where $\delta_Y$ is the Dirac mass concentrated at $Y$. As already pointed out, \emph{observable} limits of $\mathcal{V}^h_{t,x}$ for
$h \to 0$ must be given in terms of pointwise convergence. Unfortunately, the standard basic result of the theory of Young
measures, see, e.g., Ball \cite{Ball2} or Pedregal \cite{PED1}, provides only the weak-(*) convergence (up to a suitable subsequence):
\begin{equation} \label{i5}
\mathcal{V}^h \to \mathcal{V} \ \mbox{weakly-(*) in}\ L^\infty(Q; \mathcal{P}(R^{d + 3})), \ Q \equiv (0,T) \times \Omega.
\end{equation}
Thus, similarly to the approximate solutions $[\vr_h, \vm_h, S_h, \Eh]$, the family $\{ \mathcal{V}^h \}_{h \searrow 0}$ may exhibit oscillations
with respect to the physical space $Q$. From the practical point of view, therefore, the piece of information provided by the convergence \eqref{i5} is negligible.

The concept of $\mathcal{K}$-convergence, developed in the framework of Young measures by Balder \cite{Bald, Bald1}, converts the weak-(*) convergence to the desired pointwise one by replacing sequences by (suitable) Ces\`aro averages in the spirit of the original work by
Koml\'os \cite{Kom}:

{\it A sequence $\{ f_{h} \}_{h  \searrow 0}$ of functions uniformly bounded in $L^1$ contains a subsequence $\{f_{h'}\}_{h'  \searrow 0}$ such that}
\[
\frac{1}{N} \sum_{n = 1}^N f_{h'_n} \to f \ \mbox{a.e. as}\ N \to \infty.
\]
{\it Any further subsequence of $ \{f_{h'}\} $ enjoys the same property.}

\noindent Rephrased in terms of sequences of probability measures, Balder \cite{Bald1} obtained a remarkable extension of the Prokhorov theorem:

{\it If a parametrized family of probability measures $\{ \mathcal{V}^h_{t,x} \}_{h > 0}$ is tight, meaning,}
\[
\frac{1}{|Q|} \int_Q \mathcal{V}^h_{t,x} \ \dxdt \ \mbox{is tight},
\]
{\it then, for a suitable subsequence,}
\begin{equation} \label{nar}
\frac{1}{N} \sum_{n=1}^N \mathcal{V}^{h_n}_{t,x} \to \mathcal{V}_{t,x} \ \mbox{narrowly in}\ \mathcal{P}(R^{d + 3})
\ \mbox{as}\ N \to \infty \ \mbox{for a.e.}\ (t,x) \in Q.
\end{equation}

\noindent We recall that  a sequence of probability measures $\{ \mathcal{V}_n \}_{n=1}^\infty$ \emph{converges narrowly} to a probability measure $\mathcal{V}$ if
\[
\left \langle \mathcal{V}_n; g \right \rangle \to \langle \mathcal{V}; g \rangle \ \ \ \mbox{as} \  n\to \infty\  \mbox{for any }\ g \in BC(R^{d+3}).
\]
Here the symbol $BC(R^{d+3})$ denotes the set of bounded continuous functions.

The aim of the present paper is to apply these ideas to sequences of numerical solutions of the Euler system
\eqref{i1}--\eqref{i3}.  In addition to \eqref{nar}, we show that the available stability estimates yield convergence in the standard Wasserstein metric
\begin{equation} \label{i6}
W_1 \left( \frac{1}{N} \sum_{n=1}^N \mathcal{V}^{h_n}_{t,x}; \mathcal{V}_{t,x} \right)  \to 0
\ \mbox{as}\ N \to \infty \
\mbox{ for a.a. } (t,x) \in Q.
\end{equation}

\noindent  Recall that the  Wasserstein distance of $q$-th order of probability measures
$\mathcal{N}, \mathcal{V}$ is defined as
\begin{equation}
\label{Vas_def}
W_q(\mathcal{N}, \mathcal{V}):= \left\{ \inf_{\pi \in \Pi(\mathcal{N}, \mathcal{V})}  \int_{R^{d+3} \times R^{d+3}} | \zeta -\xi |^q d\pi(\zeta,\xi) \right\}^{1/q} \quad q \in [1, \infty),
\end{equation}
where $\Pi(\mathcal{N}, \mathcal{V})$ is the set of probability measures on $R^{d+3} \times R^{d+3}$ with
marginals $\mathcal{N}$ and $\mathcal{V}$.
Indeed, this is a natural metric on the set of the Young measures generated by approximate sequences of numerical
solutions in view of the available stability estimates. In comparison with the standard L\' evy--Prokhorov metric
induced by the weak-(*) topology (on the space of measures), the Wasserstein distance includes a piece of information on large distances
in the associated phase space $R^{d + 3}$.

For scalar conservation laws, the convergence of finite volume methods with respect to the Wasserstein distance has been investigated by Fjordholm and Solem \cite{SF}. They proved that monotone finite volume schemes being formally first order schemes show in special situations second order convergence rate in  $W_1$. On the other hand, the first order rate in $W_1$ has been proven optimal in general, see \cite{S19}. Such a result, however, seems to be out of reach for the Euler system as it would imply
strong (a.e. pointwise) convergence of the expected values -- the numerical solutions.

The paper is organized as follows. In Section~\ref{A} we introduce the concept of consistent approximate solutions.
Section~\ref{w} summarizes available results on the  strong (pointwise) convergence of the approximate solutions and the weak convergence in terms of Young measures. Section~\ref{K} is the heart of the paper. We use the theory of $\K$-convergence
adapted to the Young measures to show that the limiting Young measure can be effectively described by means of the Ces\`aro averages of the approximate solutions. As an added benefit with respect to available results, we therefore obtain strong convergence of the
Ces\`aro averages in $L^q((0,T) \times \Omega)$ and the Wasserstein distance $W_q$  for some $q \geq 1$. Finally, in Section~\ref{Brenner}, we present numerical simulations obtained by two finite volume methods: a recently proposed
finite volume method, see \cite{FLM_18}, that is (entropy)-stable and consistent and thus yields
the consistent approximate solutions required by the abstract theory and a more standard
finite volume method based on the generalized  Riemann problem, see, e.g.~\cite{ben1984, ben2003, ben2006, ben2007, Cheng2019} and the references therein.
The predicted strong convergence of the Ces\` aro averages when approximate solutions experience
oscillations is confirmed by both numerical methods. Our numerical results clearly demonstrate robustness
of the proposed $\mathcal{K}$-convergence technique.

\section{Approximate solutions}
\label{A}

For the sake of simplicity, we focus on the perfect gas with the standard equation of state
\[
p = (\gamma - 1) \vr e,\ \ e = \frac{1}{\gamma - 1} \vt,
\]
where $e$ is the specific internal energy and $\vt$ the absolute temperature. Accordingly,
\[
E = \frac{1}{2} \frac{|\vm|^2}{\vr} + \vr e,\
p = (\gamma - 1) \left( E - \frac{1}{2} \frac{|\vm|^2}{\vr} \right).
\]
Finally, we introduce the specific entropy
\[
s = \log\left(\vt^{\frac{1}{\gamma - 1}}  \right) - \log(\vr),\ \mbox{and the total entropy}\ S = \vr s.
\]

\begin{Definition}[Consistent approximations] \label{AD1}

We say that $[\vrh, \vmh, \Eh]$ is a family of approximate solutions \emph{consistent} with the Euler system \eqref{i1} in $Q = (0,T) \times \Omega$ if:
\begin{itemize}
\item $\vrh$, $\vmh$, $\Eh$ are measurable functions on $Q$,
\[
\vrh > 0, \ p_h \equiv (\gamma - 1) \left( E_h - \frac{1}{2} \frac{|\vm_h|^2}{\vr_h} \right) > 0 \ \mbox{a.e. in}\ Q;
\]

\item

\begin{equation} \label{D1}
\left[ \intO{ \vr_h \varphi } \right]_{t=0}^{t = \tau} =
\int_0^\tau \intO{ \left[ \vr_h \partial_t \varphi + \vc{m}_h \cdot \Grad \varphi \right]} \dt  + \int_0^\tau
e_{1,h} (t, \varphi) \dt
\end{equation}
for any $0 \leq \tau \leq T$, and any $\varphi \in C^1([0,T] \times \Ov{\Omega})$, where
\[
e_{1,h} (\cdot , \varphi) \to 0 \ \mbox{in}\ L^1(0,T) \ \mbox{as}\ h \searrow 0 \ \mbox{for any fixed}\ \varphi;
\]
\item
\begin{equation} \label{D2}
\left[ \intO{ \vc{m}_h \cdot \bfphi } \right]_{t=0}^{t = \tau} =
\int_0^\tau \intO{ \left[ \vc{m}_h \cdot \partial_t \bfphi + \frac{\vc{m}_h \otimes \vc{m}_h} {\vr_h} : \Grad \bfphi
+ p_h \Div \bfphi \right]} \dt  + \int_0^\tau
e_{2,h} (t, \bfphi) \dt
\end{equation}
for any $0 \leq \tau \leq T$, and any $\bfphi \in C^1([0,T] \times \Ov{\Omega}; R^d)$, $\bfphi \cdot \vc{n}|_{\Omega} = 0$,
where
\[
e_{2,h} (\cdot , \bfphi) \to 0 \ \mbox{in}\ L^1(0,T) \ \mbox{as}\ h \searrow 0 \ \mbox{for any fixed}\ \bfphi;
\]
\item
\begin{equation} \label{D3}
\intO{ E_h(\tau) } = \intO{ E_{0,h} }\ \mbox{for any}\ 0 \leq \tau \leq T.
\end{equation}
\end{itemize}

\noindent In addition, we say that $[\vrh, \vmh, \Eh]$ is \emph{admissible} if for the entropy $s_h$,
\[
s_h = \log\left(\vt^{\frac{1}{\gamma - 1}}_h  \right) - \log(\vrh),\ \vt_h = (\gamma - 1) \frac{1}{\vrh} \left( E_h - \frac{1}{2} \frac{|\vm_h|^2}{\vr_h} \right),
\]
the entropy inequality holds
\begin{equation} \label{D4}
\left[ \intO{ \vr_h \chi(s_h) \varphi } \right]_{t=0}^{t = \tau} \geq
\int_0^\tau \intO{ \left[ \vr_h \chi (s_h) \partial_t \varphi + \chi(s_h) \vc{m}_h \cdot \Grad \varphi \right]} \dt  + \int_0^\tau
e_{3,h} (t, \varphi) \dt
\end{equation}
for any $0 \leq \tau \leq T$, any $\varphi \in C^1([0,T] \times \Ov{\Omega})$, $\varphi \geq 0$, and any $\chi$,
\[
\chi : R \to R \ \mbox{a non--decreasing concave function}, \ \chi(s) \leq \Ov{\chi} \ \mbox{for all}\ s \in R,
\]
where
\[
e_{3,h} (\cdot , \varphi) \to 0 \ \mbox{in}\ L^1(0,T) \ \mbox{as}\ h \searrow 0 \ \mbox{for any fixed}\ \varphi.
\]

\end{Definition}

A family of approximate solutions provides seemingly less information than the corresponding \emph{weak formulation} of
the Euler system \eqref{i1}--\eqref{i3}, and \eqref{i4}. Nonetheless, as we shall see below, the approximate solutions in the sense
of Definition \ref{AD1} generate a measure--valued solution introduced in \cite{BF}.
In this paper, we focus on the consistent approximate solutions generated by suitable numerical schemes. A specific example of such a scheme -- the finite volume method \eqref{n2}-\eqref{n4} -- is given in Section~\ref{Brenner}. However, the concept presented here is quite general allowing to include the vanishing viscosity as well as other types of singular limit perturbations. In particular, $[\vrh,\vmh, \Eh]$ may be weak solutions of the Euler system itself (the error terms $e_{i,h}, i=1,2,3$ being zero).

Note that, in contrast with the Euler system \eqref{i1}, the approximate solutions satisfy merely the total energy balance \eqref{D3},
meaning the last equation in \eqref{i1} is integrated over the physical domain.
It can be shown, however, that \eqref{D3}, together with the entropy inequality \eqref{D4}, give rise to the energy equation as soon as
the all quantities are smooth and all error terms set to be zero. The proof is the same as for the Navier--Stokes--Fourier
system and we refer the reader to \cite{FLMS_19} for details.

\section{Strong vs. weak convergence}
\label{w}

We discuss sequential stability of the set of approximate solutions introduced in Definition \ref{AD1}. To this end, we suppose that
the corresponding initial data satisfy
\begin{equation} \label{w1}
\begin{split}
\vrh(0, \cdot) &= \vr_{0,h},\
\vmh(0, \cdot) = \vm_{0,h}, \ \Eh (0, h) = E_{0,h},\  \vr_{0,h} > 0, \ E_{0,h} - \frac{1}{2} \frac{|\vm_{0,h}|^2}{\vr_{0,h}} > 0,\\
\intO{ E_{0,h} } &\leq \mathcal{E}_0 \ \mbox{uniformly for}\ h \searrow 0;
\end{split}
\end{equation}
and
\begin{equation} \label{w2}
\begin{split}
s_h(0,\cdot) \equiv \log(\vt_{0,h}^{\frac{1}{\gamma - 1}}) - \log(\vr_{0,h}) \geq {\underline{s}} > - \infty
\ &\mbox{uniformly for}\ h \searrow 0,\\
&\mbox{where} \ \vt_{0,h} \equiv (\gamma - 1) \frac{1}{\vr_{0,h}} \left( E_{0,h} - \frac{1}{2} \frac{|\vm_{0,h}|^2}{\vr_{0,h}}
\right).
\end{split}
\end{equation}

The uniform bound \eqref{w1} imposed on the initial energy, together with the total energy balance \eqref{D3}, yield
\begin{equation} \label{w3}
\intO{ E_h(\tau, \cdot) } \leq \mathcal{E}_0 \ \mbox{uniformly for}\ h \searrow 0,\ 0 \leq \tau \leq T,
\end{equation}
in particular,
\begin{equation} \label{w4}
\intO{ \frac{1}{2} \frac{|\vm_h|^2}{\vr_h} (\tau, \cdot) } \leq \mathcal{E}_0.
\end{equation}

Next, we \emph{suppose}, in accordance with the fact that the entropy is transported along streamlines,
that
\begin{equation} \label{w5}
s_h (\tau, \cdot) \geq \underline{s} \ \mbox{for all}\ 0 \leq \tau \leq T.
\end{equation}

\begin{Remark} \label{wR1}

Note that \eqref{w5} actually follows from \eqref{w2} as soon as the error term in \eqref{D4} vanishes or, alternatively,
\[
e_{3,h}(t, \psi ) \equiv 0 \ \mbox{for any}\ \psi = \psi(t),
\]
see \cite[Section~4.2]{FLM_18} for details.

\end{Remark}

Anticipating \eqref{w5} we may rewrite the total energy in terms of $\vr_h, \vm_h, s_h$ as
\[
E_h = \frac{1}{2} \frac{|\vm_h|^2}{\vr_h} + \frac{1}{\gamma - 1} \vrh^\gamma \exp \Big[ (\gamma - 1) s_h \Big].
\]
In view of \eqref{w3}--\eqref{w5} we obtain
\begin{equation} \label{w6}
\intO{ \vrh^\gamma (\tau, \cdot) } \aleq \mathcal{E}_0 \ \mbox{uniformly in}\ \tau, \ h,
\end{equation}
and
\begin{equation} \label{w7}
\intO{ \vmh^{\frac{2 \gamma}{\gamma + 1}} (\tau, \cdot) } \aleq \mathcal{E}_0 \ \mbox{uniformly in}\ \tau, \ h.
\end{equation}
Finally, introducing the total entropy $S_h \equiv \vr_h (s_h - \underline{s}) \geq 0$, we deduce
\begin{equation} \label{w8}
\intO{ S_h^\gamma (\tau, \cdot) } \aleq \mathcal{E}_0 \ \mbox{uniformly in}\ \tau, \ h,
\end{equation}
and
\begin{equation} \label{w9}
\intO{ \left( \frac{S_h^2}{\vr_h} \right)^\gamma (\tau, \cdot) } \aleq \mathcal{E}_0 \ \mbox{uniformly in}\ \tau, \ h,
\end{equation}
see \cite[Section 3.2]{BFH_19} for details.

\subsection{Strong (pointwise) convergence}

Note that all the uniform estimates established so far yield only boundedness of the approximate solutions in various $L^p$-spaces.
The appropriate notion of convergence is therefore ``{weak}'', or, more precisely, ``{weak-(*)}'' convergence, meaning convergence in the sense of integral averages. As already pointed out, this is of little practical importance as the limits of oscillating quantities
are difficult to identify.

However,
the convergence is strong (pointwise a.e.) as soon as the limit Euler system \eqref{i1}--\eqref{i3} admits a (unique) strong solution.
Indeed any \emph{admissible} sequence of consistent approximate solutions $[\vrh, \vmh, \Eh]$ generates a dissipative measure--valued
(DMV) solution in the sense of \cite{BF}. The (DMV) solutions enjoy the weak--strong uniqueness property yielding the desired conclusion.
The relevant result can be stated as follows, cf.~\cite[Theorem 3.3]{BF}:

\begin{Proposition} \label{wP1}

Let $\Omega \subset R^d$, $d=1,2,3$ be a bounded Lipschitz domains. Let $\{ \vrh, \vmh, \Eh \}_{h \searrow 0}$ be a
family of admissible approximate solutions consistent with the Euler system in the sense of Definition \ref{AD1}. Let the approximate
initial data satisfy \eqref{w1}, \eqref{w2}, and suppose that \eqref{w5} holds. Let
\[
\begin{split}
\vr_{0,h} &\to \vr_0 > 0, \ \vm_{0,h} \to \vm_0, \ E_{0,h} \to E_0 > 0,\\
s_h(0, \cdot) &\equiv  \frac{1}{\gamma - 1} \log \left( \frac{\gamma - 1}{\vr_{0,h}} \left(
E_{0,h} - \frac{1}{2} \frac{|\vm_{0,h}|^2}{\vr_{0,h}} \right)     \right)- \log(\vr_{0,h})  \to s_0 \ \mbox{weakly in}\ L^1(\Omega)  \mbox{ as } h\searrow 0,
\end{split}
\]
where
\[
s_0 = \frac{1}{\gamma - 1} \log \left( \frac{\gamma - 1}{\vr_{0}} \left(
E_{0} - \frac{1}{2} \frac{|\vm_{0}|^2}{\vr_{0}} \right)     \right)- \log(\vr_{0}).
\]

Finally, suppose that the Euler system \eqref{i1}, \eqref{i2}, with the initial data \eqref{i3}, admits a strong solution
$[\vr, \vm, E]$ Lipschitz continuous in $[0,T] \times \Ov{\Omega}$.

Then
\[
\vrh \to \vr,\ \vmh \to \vm,\ \Eh \to E \ \mbox{in}\ L^1((0,T) \times \Omega) \ \mbox{as}\ h \searrow 0.
\]

\end{Proposition}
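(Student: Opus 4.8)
The plan is to pass to the limit in the family of consistent approximate solutions, identify the limit object as a dissipative measure--valued (DMV) solution in the sense of \cite{BF}, and then invoke the weak--strong uniqueness principle for such solutions to force the generated Young measure to be a Dirac mass carried by the strong solution; the claimed $L^1$ convergence then follows from equi-integrability.

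\emph{Step 1 (Compactness and the generated Young measure).} The uniform bounds \eqref{w3}--\eqref{w9} guarantee that $\{\vrh\}$, $\{\vmh\}$, $\{S_h\}$, $\{\Eh\}$ are bounded in $L^\infty(0,T; L^p(\Omega))$ for some $p>1$. By the fundamental theorem on Young measures (Ball \cite{Ball2}, Pedregal \cite{PED1}), after passing to a subsequence the family $\mathcal{V}^h_{t,x} = \delta_{[\vrh,\vmh,S_h,\Eh](t,x)}$ generates a Young measure $\{\mathcal{V}_{t,x}\}_{(t,x)\in Q}$, i.e. \eqref{i5} holds. For each nonlinearity of at most critical growth (in particular $\vmh\otimes\vmh/\vrh$, $p_h$, and the energy flux), the weak-$(^*)$ limit equals $\langle \mathcal{V}_{t,x}; \cdot \rangle$ plus a concentration defect measure; the momentum and energy defects are nonnegative (the former matrix-valued) and, by the structure of \eqref{w3}--\eqref{w9}, are dominated up to a multiplicative constant by the energy concentration defect $\mathfrak{E} \in L^\infty_{\mathrm{weak}}(0,T;\mathcal{M}^+(\Ov{\Omega}))$ coming from \eqref{D3}.

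\emph{Step 2 (The limit is a DMV solution).} Letting $h\searrow 0$ in \eqref{D1}, \eqref{D2}, \eqref{D3}, \eqref{D4}, and using that the consistency errors $e_{i,h}\to 0$ in $L^1(0,T)$ together with the convergence of the initial data, we recover the weak formulation of the continuity, momentum, and entropy relations for the pair $(\{\mathcal{V}_{t,x}\},\mathfrak{E})$, along with the total energy inequality
\[
\intO{ \left< \mathcal{V}_{\tau,x}; \tvE \right> } + \mathfrak{E}(\tau)(\Ov{\Omega}) \leq \mathcal{E}_0 \quad \text{for a.a. } \tau \in (0,T).
\]
The entropy inequality survives for every admissible truncation $\chi$ because $\chi$ is bounded, so $\vrh\chi(s_h)$ and $\chi(s_h)\vmh$ are equi-integrable. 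Hence $\{\mathcal{V}_{t,x}\}$ is a DMV solution of \eqref{i1}--\eqref{i4} with initial state $\mathcal{V}_{0,x} = \delta_{[\vr_0,\vm_0,S_0,E_0](x)}$.

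\emph{Step 3 (Weak--strong uniqueness and conclusion).} By hypothesis the Euler system admits a strong solution $[\vr,\vm,E]$ Lipschitz on $[0,T]\times\Ov{\Omega}$. The DMV weak--strong uniqueness theorem \cite[Theorem~3.3]{BF} then yields
\[
\mathcal{V}_{t,x} = \delta_{[\vr(t,x),\vm(t,x),S(t,x),E(t,x)]} \ \text{ for a.a. } (t,x)\in Q, \qquad \mathfrak{E}\equiv 0.
\]
A tight Young measure that is a Dirac mass forces convergence in measure (test with the Carath\'eodory function $g(t,x,\xi)=\min\{|\xi-[\vr,\vm,S,E](t,x)|,1\}$), so $\vrh\to\vr$, $\vmh\to\vm$, $\Eh\to E$ in measure on $Q$. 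The uniform $L^p$-bounds with $p>1$ from \eqref{w6}, \eqref{w7}, \eqref{w3} provide equi-integrability, hence by Vitali's theorem the convergence holds in $L^1((0,T)\times\Omega)$. Since the limit does not depend on the subsequence, the whole family converges. The main obstacle is Step 2: the careful accounting of the concentration defects generated by $\vmh\otimes\vmh/\vrh$ and $p_h$ and the verification that they are controlled by the energy defect, which is precisely the structural compatibility that makes the limit a \emph{dissipative} MV solution and hence brings \cite[Theorem~3.3]{BF} into play.
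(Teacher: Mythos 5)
Your proposal is correct and follows essentially the same route the paper takes: it only sketches the argument, asserting that admissible consistent approximate solutions generate a dissipative measure--valued solution in the sense of \cite{BF} and that the weak--strong uniqueness property of \cite[Theorem~3.3]{BF} then collapses the Young measure to a Dirac mass at the Lipschitz solution, which is exactly your Steps~2--3. One small caveat: for the energy component the bound \eqref{w3} is only $L^\infty(0,T;L^1(\Omega))$, so equi-integrability of $\Eh$ does \emph{not} follow from an $L^p$ bound with $p>1$; instead you should invoke the vanishing of the concentration defect $\mathfrak{E}\equiv 0$ (which you have already established) together with the a.e.\ convergence and nonnegativity of $\Eh$ to conclude $\Eh\to E$ in $L^1$ via a Scheff\'e-type argument.
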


Note that the convergence stated in Proposition \ref{wP1} is unconditional, meaning it is not necessary to consider subsequences, as the limit solution is unique.

\subsection{Weak convergence}

If the limit system fails to possess a classical solution, the convergence might not be strong. In such a case, the nonlinear composition
operators do not commute with weak limits and the limit object may not be even a weak solution of the Euler system.

To analyze the weak convergence, it is convenient to work with the variables $[\vrh, \vmh, S_h]$. In view of the uniform bounds
\eqref{w6}--\eqref{w8} we obtain that
\begin{equation} \label{w10}
\begin{split}
\vrh &\to \vr \ \mbox{weakly-(*) in}\ L^\infty(0,T; L^\gamma(\Omega)),\\
\vmh &\to \vm \ \mbox{weakly-(*) in}\ L^\infty(0,T; L^{\frac{2 \gamma}{\gamma + 1}}(\Omega; R^d)),\\
S_h &\to S \ \mbox{weakly-(*) in}\ L^\infty(0,T; L^\gamma(\Omega))
\end{split}
\end{equation}
passing to suitable subsequences as the case may be.
The key quantity is the total energy
\[
\Eh = \frac{1}{2} \frac{|\vm_h|^2}{\vr_h} + \frac{a}{\gamma - 1}  \vr_h^\gamma \exp \left( (\gamma - 1) \frac{S_h}{\vr_h} \right),
\]
where $a = \exp((\gamma-1)\underline{s})$.
Note that
\[
[\vr, \vm, S] \mapsto E(\vr, \vm, S) \equiv \left\{
\begin{array}{l} \frac{1}{2} \frac{|\vm|^2}{\vr} + \frac{a}{\gamma - 1} \vr^\gamma \exp \left( (\gamma - 1) \frac{S}{\vr} \right),\
\ \mbox{if} \ \vr > 0 \\
0,\ \ \mbox{if}\ \vr = |\vm| =  0 \mbox { and } S\leq 0 \\
\infty,\ \ \mbox{otherwise}
\end{array} \right.
\]
is a convex lower semicontinuous function on $R^{d+2}$,
smooth and strictly convex for
$\vr > 0$ and $S$, see \cite[Lemma~3.1]{BFH_19}. Considering a subsequence if necessary, we conclude that
$$
\{ \vr_h, \vm_h, S_h,  E_h\}_{h > 0} \ \mbox{generates a Young measure}\ \mathcal{V}_{t,x} \in \mathcal{P} (R^{d+3}),\ (t,x) \in Q.
$$
In addition we introduce a \emph{marginal} $\nu_{t,x} \in \mathcal{P}(R^{d+2})$,
such that
\[
\langle \nu_{t,x}, g(\widetilde \vr_h, \widetilde \vm_h, \widetilde S_h )\rangle
\equiv \langle \mathcal{V}_{t,x}, g(\widetilde \vr_h, \widetilde \vm_h, \widetilde S_h) 1_{R} (\widetilde{E}) )\rangle
\
\mbox{for all} \ g \in BC(R^{d+2}).
\]
Moreover, we have
\[
\begin{split}
& E_h \to E \ \mbox{weakly-(*) in}\ L^\infty(0,T; \mathcal{M}^+(\Ov{\Omega})),\\
&\frac{1}{2} \frac{|\vm|^2}{\vr} + \frac{ a}{\gamma - 1} \vr^\gamma \exp \left( (\gamma - 1) \frac{S}{\vr} \right)
\leq \left< \nu; \frac{1}{2} \frac{|\widetilde{\vm}|^2}{\widetilde{\vr}} + \frac{{a}}{\gamma - 1} {\widetilde{\vr}}^\gamma \exp \left( (\gamma - 1) \frac{\widetilde{S}}{\widetilde{\vr}} \right)
\right> \leq E,\\
\end{split}
\]
where the first inequality is Jensen's inequality, while the second follows from
from the arguments of \cite[Lemma~2.1]{FGSGW_16} applied to $F=0$,  $G=E$ and $Z = (\widetilde \vr, \widetilde \vm, \widetilde S).$
Using the energy conservation \eqref{D3} we obtain
\[
\begin{split}
\int_{\Ov{\Omega}}  E(\tau) &= \lim_{h \to 0} \intO{ \left[ \frac{1}{2} \frac{|\vm_{0,h}|^2}{\vr_{0,h}} + \frac{{ a}}{\gamma - 1} \vr_{0,h}^\gamma \exp \left( (\gamma - 1) \frac{S_{0,h}}{\vr_{0,h}} \right) \right]} \ \mbox{for a.a.}\ \tau \in [0,T].
\end{split}
\]

\noindent We proceed by introducing the oscillation defect
\[
\begin{split}
{\bf osc}&[\vrh, \vmh, S_h] \\ &\equiv \left< \nu; \frac{1}{2} \frac{|\widetilde{\vm}|^2}{\widetilde{\vr}} + \frac{ { a}}{\gamma - 1} {\widetilde{\vr}}^\gamma \exp \left( (\gamma - 1) \frac{\widetilde{S}}{\widetilde{\vr}} \right)
\right> - \left[ \frac{1}{2} \frac{|\vm|^2}{\vr} + \frac{{ a}}{\gamma - 1} \vr^\gamma \exp \left( (\gamma - 1) \frac{S}{\vr} \right) \right] \geq 0;
\end{split}
\]
the concentration defect
\[
{\bf conc}[\vrh, \vmh, S_h] \equiv E - \left< \nu; \frac{1}{2} \frac{|\widetilde{\vm}|^2}{\widetilde{\vr}} + \frac{{ a}}{\gamma - 1} {\widetilde{\vr}}^\gamma \exp \left( (\gamma - 1) \frac{\widetilde{S}}{\widetilde{\vr}} \right)
\right> \geq 0;
\]
and the total energy defect
\[
{\bf edef}[\vrh, \vmh, S_h] = {\bf osc}[\vrh, \vmh, S_h] + {\bf conc}[\vrh, \vmh, S_h]
=
E - \left[ \frac{1}{2} \frac{|\vm|^2}{\vr} + \frac{ { a}}{\gamma - 1} \vr^\gamma \exp \left( (\gamma - 1) \frac{S}{\vr} \right) \right].
\]
Note that ${\bf osc}[\vrh, \vmh, S_h] \in L^\infty(0,T; L^1(\Omega))$, while
\[
{\bf conc}[\vrh, \vmh, S_h],\ {\bf edef}[\vrh, \vmh, S_h] \in L^\infty(0,T; \mathcal{M}^+ (\Ov{\Omega})).
\]

\noindent The following observations are standard:
\[
\begin{split}
{\bf osc}[\vrh, \vmh, S_h]|_B = 0 \ &\Leftrightarrow \ \vrh \to \vr,\ \vmh \to \vm, \ S_h \to S
\ \mbox{a.e. in}\ B \ \mbox{(up to a subsequence)};
\\
{\bf conc}[\vrh, \vmh, S_h]|_{\Ov{B}} = 0\ &\Leftrightarrow\
E_h \to E \ \mbox{weakly in}\ L^1(B);\\
{\bf edef}[\vrh, \vmh, S_h]|_{\Ov{B}} = 0 \ &\Leftrightarrow \ \vrh \to \vr,\ \vmh \to \vm, \ S_h \to S,\
E_h \equiv E(\vrh, \vm_h, S_h) \to E
\ \mbox{in}\ L^1(B)
\end{split}
\]
for any Borel set $B \subset [0,T] \times \Ov{\Omega}$.

Finally, we report the following result, see Chaudhuri \cite{NC_19}, which can be obtained in analogously 
way as in \cite{FH} due to the convexity of pressure.

\begin{Proposition} \label{wT1}
Let $\Omega \subset R^d$, $d=1,2,3$ be a bounded Lipschitz domain. Let $\{ \vrh, \vmh, \Eh \}_{h  \searrow0}$ be a
family of admissible approximate solutions consistent with the Euler system in the sense of Definition \ref{AD1}
such that
\[
\begin{split}
\vrh &\to \vr \ \mbox{weakly-(*) in}\ L^\infty(0,T; L^\gamma(\Omega)),\\
\vmh &\to \vm \ \mbox{weakly-(*) in}\ L^\infty(0,T; L^{\frac{2 \gamma}{\gamma + 1}}(\Omega; R^d)),\\
S_h &\to S \ \mbox{weakly-(*) in}\ L^\infty(0,T; L^\gamma(\Omega)),
\end{split}
\]
and
\[
E_h \equiv \frac{1}{2} \frac{|\vm_h|^2}{\vr_h} + \frac{{ a}}{\gamma - 1} \vr_h^\gamma \exp \left( (\gamma - 1) \frac{S_h}{\vr_h} \right)
\to E \ \mbox{in}\ L^\infty(0,T; \mathcal{M}^+(\Ov{\Omega})).
\]
Suppose that there is an open neighborhood ${U}$ of $\partial \Omega$ such that
\[
{\bf edef}[\vrh, \vmh, S_h]|_{[0,T] \times \Ov{U}} = 0.
\]
Finally, suppose that the limit $[\vr, \vm, S]$ is a weak solution of the Euler system in $\mathcal{D}'((0,T) \times \Omega)$, in particular,
\[
\partial_t \vm + \Div \left( \frac{\vm \otimes \vm}{\vr} \right) + \Grad p(\vr, S) = 0
\ \mbox{in}\ \mathcal{D}'((0,T) \times \Omega).
\]
Then
\[
{\bf edef}[\vrh, \vmh, S_h] \equiv 0,
\]
and 
\[
\vr_h \to \vr, \ \vm_h \to \vm,\ S_h \to S, \ E_h \to E \ \mbox{in}\ L^1((0,T) \times \Omega).
\]
\end{Proposition}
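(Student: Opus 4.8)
The plan is to combine the energy defect machinery with the weak-momentum equation and the convexity of the pressure, following the strategy of \cite{FH} and Chaudhuri \cite{NC_19}. The central quantity is the difference between the ``Young-measure energy'' and the ``energy of the expected values'', i.e.\ the total energy defect $\mathbf{edef}[\vrh,\vmh,S_h]$, which by the discussion preceding the statement is a nonnegative (Radon-measure-valued) object. It suffices to show that this defect vanishes identically on $[0,T]\times\Ov\Omega$: once $\mathbf{edef}=0$, the standard equivalences listed just before the statement immediately give the a.e.\ convergence of $\vrh,\vmh,S_h$ and the $L^1$-convergence of $E_h$, hence the conclusion. Since by hypothesis the defect already vanishes in the collar $[0,T]\times\Ov U$ of the boundary, the whole argument can be localized to compactly supported test functions in the interior $\Omega$, which removes any boundary-term complication.

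First I would test the approximate continuity equation \eqref{D1} and momentum equation \eqref{D2} with the limiting fields (after a mollification in $(t,x)$ to make them admissible test functions), let $h\searrow 0$ using the weak-(*) convergences \eqref{w10} and the vanishing of the consistency errors $e_{1,h},e_{2,h}$, and compare with the assumed weak formulation of the Euler system satisfied by $[\vr,\vm,S]$. The nonlinear terms $\vm_h\otimes\vm_h/\vr_h$ and $p(\vr_h,S_h)$ converge weakly to Young-measure averages $\langle\nu_{t,x};\,\tv\otimes\tv/\tvr\rangle$ and $\langle\nu_{t,x};\,p(\tvr,\tvS)\rangle$, which differ from $\vm\otimes\vm/\vr$ and $p(\vr,S)$ precisely by Jensen-type defects controlled by the oscillation defect $\mathbf{osc}$. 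The key algebraic identity — this is where convexity of the pressure (equivalently of $E(\vr,\vm,S)$) enters — is a pointwise inequality bounding the matrix defect $\langle\nu;\tv\otimes\tv/\tvr\rangle - \vm\otimes\vm/\vr + \big(\langle\nu;p(\tvr,\tvS)\rangle-p(\vr,S)\big)\mathbb{I}$ from below by a constant multiple of $\mathbf{edef}\,\mathbb{I}$ in the sense of symmetric matrices; this is the analogue of the relation used in \cite{FH,NC_19} and follows from the structure of the entropy/energy function.

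Given that identity, the difference of the two momentum formulations reads, for every $\bfphi\in C^1_c((0,T)\times\Omega;R^d)$,
\[
0 = \int_0^T\!\!\int_\Omega \Big( \langle\nu;\tfrac{\tv\otimes\tv}{\tvr}\rangle - \tfrac{\vm\otimes\vm}{\vr} \Big) : \Grad\bfphi \;\dxdt
 + \int_0^T\!\!\int_\Omega \big( \langle\nu;p(\tvr,\tvS)\rangle - p(\vr,S)\big)\,\Div\bfphi \;\dxdt,
\]
together with the corresponding (trivial) identity coming from \eqref{D1}. Testing with a gradient field $\bfphi=\Grad\psi$ and exploiting the energy-conservation identity $\int_{\Ov\Omega}E(\tau)=\lim_h\int_\Omega E_{0,h}$ (which forces $\int_{\Ov\Omega}\mathbf{edef}(\tau)$ to equal the initial energy defect, and the latter is $0$ by the strong convergence of the initial data together with the $L^1$-weak convergence of $s_{0,h}$), one obtains that the scalar part of the measure $\mathbf{edef}$ is in the kernel of $\Delta$ in a distributional sense while being a nonnegative measure of prescribed (zero) mass — forcing it to vanish. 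I expect the main obstacle to be precisely this last step: turning the weak momentum relation plus the global energy balance into the conclusion $\mathbf{edef}\equiv 0$. One must carefully handle that $\mathbf{edef}$ is only a measure (so testing against $\Grad\psi$ and integrating by parts requires $\psi\in C^2$ and a density argument), that the inequality between the matrix defect and $\mathbf{edef}\,\mathbb I$ might degenerate where $\vr=0$ (handled by the lower-semicontinuous extension of $E$ and the bound $\langle\nu;\tvr=|\tv|=0,\ \tvS\le 0\rangle$ carrying the concentration part), and that the collar hypothesis is genuinely used to justify the interior localization and discard boundary integrals. Modulo these technical points, which are carried out in \cite{FH,NC_19}, the argument closes.
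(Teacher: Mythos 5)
The paper does not actually prove Proposition \ref{wT1} in-house: it is stated with a one-line attribution to Chaudhuri \cite{NC_19}, ``obtained analogously as in \cite{FH} due to the convexity of pressure.'' Your skeleton is a fair reconstruction of that strategy (pass to the limit in \eqref{D2}, subtract the weak Euler momentum equation satisfied by the barycenters, and conclude that a nonnegative defect object with trace comparable to $\mathbf{edef}$ must vanish), so the overall plan is the intended one. Two points, however, do not survive scrutiny.

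First, the matrix inequality you invoke is not the correct one. Writing the Reynolds-type defect as
\[
\mathfrak{R} \equiv \left< \nu ; \frac{\tvm \otimes \tvm}{\tvr} + p(\tvr,\widetilde{S})\,\mathbb{I} \right> - \left( \frac{\vm \otimes \vm}{\vr} + p(\vr,S)\,\mathbb{I} \right) + \mbox{(concentration part)},
\]
what convexity of $(\vr,\vm,S)\mapsto \vm\otimes\vm/\vr$ and of the pressure actually gives is that $\mathfrak{R}$ is a \emph{positive semidefinite} matrix-valued measure whose \emph{trace} is bounded below and above by constant multiples of $\mathbf{edef}$. The stronger statement $\mathfrak{R} \geq c\,\mathbf{edef}\,\mathbb{I}$ in the sense of symmetric matrices is false in general (the kinetic contribution can be rank one). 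Only the pair ``psd $+$ two-sided trace control'' is available, and it is exactly what the argument needs.

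Second, and more seriously, your closing step does not work. You propose to deduce $\mathbf{edef}\equiv 0$ from the global energy balance $\int_{\Ov{\Omega}} E(\tau) = \lim_h \intO{E_{0,h}}$ together with strong convergence of the initial data. But strong convergence of the initial data is not among the hypotheses of the proposition, and even if it were, conservation of the \emph{measure} $E$ does not force $\int_{\Ov\Omega}\mathbf{edef}(\tau)=0$: that would require the barycenter energy $\intO{E(\vr,\vm,S)(\tau)}$ to equal the initial energy, i.e.\ the limit weak solution to be energy-conservative, which is precisely what one cannot assume. Likewise ``$\mathbf{edef}$ is in the kernel of $\Delta$'' is not what testing $\Div\mathfrak{R}=0$ against gradients yields, since $\mathfrak{R}$ is not a scalar multiple of $\mathbb{I}$. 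The correct close is purely local and uses only the momentum equation: from $\Div\mathfrak{R}=0$ in $\mathcal{D}'((0,T)\times\Omega)$, test with $\bfphi(t,x)=\psi(t)\,x\,\eta(x)$, where $\eta\in C^\infty_c(\Omega)$ equals $1$ on $\Omega\setminus U$; since $\mathfrak{R}$ vanishes on $[0,T]\times\Ov{U}$ by hypothesis, only $\Grad\bfphi=\psi\,\mathbb{I}$ contributes, giving $\int_0^T\psi\int_{\Ov\Omega}\operatorname{tr}\mathfrak{R}\,\dt=0$. Nonnegativity of $\operatorname{tr}\mathfrak{R}$ then forces $\operatorname{tr}\mathfrak{R}=0$, hence $\mathfrak{R}=0$ by positive semidefiniteness and $\mathbf{edef}=0$ by the lower trace bound; the listed equivalences then give the $L^1$ convergences. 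With this replacement your outline matches the cited argument.
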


Proposition \ref{wT1} asserts that as long as the convergence of approximate solutions is strong in a neighborhood of the boundary and the limit is a weak solution of the Euler system, then the convergence must be strong everywhere. A very rough extrapolation
might be that either the convergence is strong or the limit object \emph{is not} a weak solution of the limit system.
Such a result might be seen as a sharp version of the celebrated Lax-Wendroff theorem \cite{LW_60}  which states that a bounded sequence of pointwisely convergent numerical solutions, that are
generated by a consistent, conservative and entropy stable numerical scheme, converges to a weak entropy solution.

\section{$\mathcal{K}$-convergence of Young measures}
\label{K}

Accepting the conclusion of Proposition \ref{wT1} as an evidence that oscillating (weakly converging) sequences of approximate solutions
may give rise to ``truly'' measure--valued solutions, we might want to compute the distribution of the associated Young measure.
Unfortunately, the method proposed in \cite{FjKaMiTa,FjMiTa1} asserts only the weak-(*) convergence of these objects
featuring the same difficulties to be captured numerically as oscillating solutions. Here, we propose  \emph{a new method} based on
the concept of \emph{$\mathcal{K}$-convergence} developed in the context of Young measures by Balder \cite{Bald, Bald1}.

Evoking the situation described in Section \ref{w}, we consider a Young measure $\mathcal{V}_{t,x}$ generated by a sequence of
admissible approximate solutions $[\vr_h, \vm_h, S_h, E_h]$ consistent with the Euler system.
Note that we have deliberately included
the conservative variables in order to capture correctly shock positions and to provide
a direct comparison with the results \cite{FjKaMiTa,FjMiTa1}. The Young measure adapted variant of the celebrated
Prokhorov theorem proved by Balder \cite{Bald1}, \cite[Theorem 3.15]{Bald} asserts the existence of a suitable subsequence such that
\begin{equation} \label{K1}
\begin{split}
\frac{1}{N} \sum_{n=1}^N \delta_{[\vr_{h_n}(t,x), \vm_{h_n}(t,x), S_{h_n}(t,x), E_{h_n}(t,x)]} \to
\mathcal{V}_{t,x} \  \mbox{as}\ N \to \infty\  &\mbox{narrowly in}\ \mathcal{P}( R^{d+3})\\
&\mbox{for a.a.}\ (t,x) \in Q.
\end{split}
\end{equation}
Here and hereafter, the symbol $\delta_Y$ denotes the Dirac mass supported at the point $Y$.
Moreover, in view of Castaign, de Fitte, and Valadier \cite[Lemma 6.5.17]{CFV} and the Koml\'os theorem \cite{Kom},
the above sequence can be chosen in such a way that the barycenters converge:
\begin{equation} \label{K2}
\begin{split}
\frac{1}{N} \sum_{n=1}^N \vr_{h_n}(t,x) &\to \vr(t,x) \equiv \left< \mathcal{V}_{t,x}; \widetilde{\vr} \right>, \qquad \ \
\\
\frac{1}{N} \sum_{n=1}^N \vm_{h_n}(t,x) &\to \vm(t,x) \equiv \left< \mathcal{V}_{t,x}; \widetilde{\vm} \right>, \qquad
\\
\frac{1}{N} \sum_{n=1}^N S_{h_n}(t,x) &\to S(t,x) \equiv \left< \mathcal{V}_{t,x}; \widetilde{S} \right>, \qquad
\\
\frac{1}{N} \sum_{n=1}^N E_{h_n}(t,x) &\to \Ov{E}(t,x) \equiv \left< \mathcal{V}_{t,x}; \widetilde{E} \right>
\end{split}
\end{equation}
for a.a.~$(t,x) \in Q$.
Finally, in view of \eqref{w10} and the
standard Banach--Sacks theorem, the convergence of the density, momentum and entropy can be strengthened to
\begin{equation} \label{K3}
\begin{split}
\frac{1}{N} \sum_{n=1}^N \vr_{h_n} &\to \vr \ \mbox{in}\ L^\gamma((0,T) \times \Omega)\\
\frac{1}{N} \sum_{n=1}^N \vm_{h_n} &\to \vm \ \mbox{in}\ L^{\frac{2 \gamma}{\gamma + 1}}((0,T) \times \Omega; R^d) \\
\frac{1}{N} \sum_{n=1}^N S_{h_n} &\to S \ \mbox{in}\ L^\gamma((0,T) \times \Omega).
\end{split}
\end{equation}

As observed in Section \ref{w}, we have $\Ov{E} \leq E$, where
$E$ is the weak-(*) limit of $E_{h_n}$ in $L^\infty(0,T; \mathcal{M}^+(\Ov{\Omega}))$.
Moreover, from the Fatou-Vitali theorem for the Young measures~\cite[Theorem~3.13]{Bald}
we also deduce
\begin{equation} \label{Fatou}
\int_0^T \int_\Omega  \langle  \nu_{t,x} , E(\widetilde \vr, \widetilde \vm, \widetilde S)(t,x)\rangle
\mbox{d} x \mbox{d} t
\leq \liminf_{N\to \infty}\int_0^T \int_\Omega \frac{1}{N} \sum_{n=1}^N E_{h_n}(t,x)  \mbox{d} x \mbox{d} t.
\end{equation}

\noindent As a consequence of \eqref{K2}--\eqref{Fatou}, we can extend \eqref{K1} as follows:

\begin{Lemma}~\label{Vas1}
Let
$$
\overline{\mathcal{V}}^N_{t,x} \equiv \frac{1}{N} \sum_{n=1}^N \delta_{[\vr_{h_n}, \vm_{h_n}, S_{h_n}, E_{h_n}](t,x)} \in \mathcal{P}(R^{d+3})
$$
be the family of Ces\` aro averages of the Young measures in \eqref{K1}. \\

\noindent Then
\begin{equation}
\langle  \overline{\mathcal{V}}^N_{t,x}, g  (\widetilde{\vr}, \widetilde{\vm}, \widetilde{S})\rangle \to
 \langle \mathcal{V}_{t,x}, g  (\widetilde{\vr}, \widetilde{\vm}, \widetilde{S})\rangle \ \mbox{as}\ N \to \infty
\ \mbox{for a.a.}\ (t,x) \in Q
\end{equation}
for any function $g \in C(R^{d+2}),$
$$
| g(z) | \leq 1 + |z|^q, \quad  1 \leq q  <  \frac{2 \gamma}{\gamma + 1}.
$$
\end{Lemma}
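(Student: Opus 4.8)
The plan is to establish the convergence $\langle \overline{\mathcal{V}}^N_{t,x}, g(\widetilde{\vr}, \widetilde{\vm}, \widetilde{S}) \rangle \to \langle \mathcal{V}_{t,x}, g(\widetilde{\vr}, \widetilde{\vm}, \widetilde{S}) \rangle$ by a truncation argument, upgrading the narrow convergence \eqref{K1} from bounded continuous test functions to test functions with at most $q$-th order growth, where $q < \frac{2\gamma}{\gamma+1}$ is strictly subcritical. First I would fix $(t,x)$ in the full-measure set on which \eqref{K1} and \eqref{K2} both hold, so that the Cesàro-averaged Dirac combinations converge narrowly to $\mathcal{V}_{t,x}$ and the barycenters of $\vr_{h_n}, \vm_{h_n}, S_{h_n}$ converge to those of the limit measure. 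For $R > 0$ let $\chi_R$ be a continuous cutoff with $\chi_R \equiv 1$ on $B_R \subset R^{d+2}$, $\chi_R \equiv 0$ outside $B_{2R}$, $0 \le \chi_R \le 1$; then $g\chi_R \in BC(R^{d+2})$ and \eqref{K1} gives $\langle \overline{\mathcal{V}}^N_{t,x}, g\chi_R \rangle \to \langle \mathcal{V}_{t,x}, g\chi_R \rangle$ as $N \to \infty$, for every fixed $R$.

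The core of the argument is then to control the tails $\langle \overline{\mathcal{V}}^N_{t,x}, |g|(1-\chi_R) \rangle$ and $\langle \mathcal{V}_{t,x}, |g|(1-\chi_R) \rangle$ uniformly in $N$ (respectively to show they vanish as $R \to \infty$). For this I would invoke the growth bound $|g(z)| \le 1 + |z|^q$ together with a uniform bound on the $\frac{2\gamma}{\gamma+1}$-moment of the averaged measures. Precisely, from \eqref{w6}--\eqref{w8} and the definition of $\overline{\mathcal{V}}^N_{t,x}$ as an average of Dirac masses, one has that for a.a.\ $(t,x)$ the quantity $\frac{1}{N}\sum_{n=1}^N \big(|\vr_{h_n}|^\gamma + |\vm_{h_n}|^{\frac{2\gamma}{\gamma+1}} + |S_{h_n}|^\gamma\big)(t,x)$ need not be bounded pointwise; however, after a further application of the Komlós/Banach--Saks selection used to obtain \eqref{K3} — which gives $L^\gamma$ and $L^{\frac{2\gamma}{\gamma+1}}$ strong convergence of the Cesàro averages of the components, hence (along a subsequence) a dominating function in those Lebesgue spaces — one secures, for a.a.\ $(t,x)$, that $\sup_N \langle \overline{\mathcal{V}}^N_{t,x}, |z|^{p} \rangle < \infty$ with exponent $p = \frac{2\gamma}{\gamma+1} > q$ in the momentum slot and $p = \gamma$ in the density and entropy slots. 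Since $q$ is \emph{strictly} below these exponents, a Chebyshev/de la Vallée-Poussin estimate yields $\langle \overline{\mathcal{V}}^N_{t,x}, |g|(1-\chi_R) \rangle \le C(t,x)\, R^{q-p} \to 0$ as $R \to \infty$, uniformly in $N$, and the same bound (by Fatou through \eqref{Fatou} applied to the marginal $\nu_{t,x}$, or by lower semicontinuity of moments under narrow convergence) controls the tail of $\mathcal{V}_{t,x}$.

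Putting these pieces together by the standard three-$\varepsilon$ split
\[
\big| \langle \overline{\mathcal{V}}^N_{t,x} - \mathcal{V}_{t,x}, g \rangle \big|
\le \big| \langle \overline{\mathcal{V}}^N_{t,x} - \mathcal{V}_{t,x}, g\chi_R \rangle \big|
+ \langle \overline{\mathcal{V}}^N_{t,x}, |g|(1-\chi_R) \rangle
+ \langle \mathcal{V}_{t,x}, |g|(1-\chi_R) \rangle,
\]
one first picks $R$ large to make the last two terms small uniformly in $N$, then lets $N \to \infty$ in the first term using \eqref{K1}. This closes the proof. The main obstacle is the second paragraph's uniform tail control: narrow convergence alone does not propagate moment bounds, so one genuinely needs the \emph{strict} gap between $q$ and the critical exponent $\frac{2\gamma}{\gamma+1}$ together with the uniform moment bound on the averaged measures at the critical exponent — the latter obtained either from the Banach--Saks strengthening \eqref{K3} (which supplies an $L^{\frac{2\gamma}{\gamma+1}}$-dominant for the momentum component of the average, whence uniform integrability of $|z|^q$ against $\overline{\mathcal{V}}^N_{t,x}$ for a.e.\ $(t,x)$) or from a direct Fatou argument on $\int_0^T\!\int_\Omega \langle \overline{\mathcal{V}}^N_{t,x}, |z|^{\frac{2\gamma}{\gamma+1}}\rangle\,\dx\,\dt$. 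Everything else — the cutoff, the three-$\varepsilon$ split, the passage to the limit in the bounded part — is routine.
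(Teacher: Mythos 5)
Your overall strategy --- truncate, pass to the limit in the bounded part via the narrow convergence \eqref{K1}, and control the tails by a uniform higher-moment bound exploiting the strict inequality $q<\frac{2\gamma}{\gamma+1}$ --- is exactly the paper's strategy (the paper truncates the values of $g$ via $T_K(z)=\min(z,K)$ rather than cutting off in phase space, an immaterial difference). However, the justification you give for the step you yourself single out as the main obstacle, namely
\[
\sup_N \big\langle \overline{\mathcal{V}}^N_{t,x},\ \widetilde{\vr}^{\,\gamma}+|\widetilde{\vm}|^{\frac{2\gamma}{\gamma+1}}+\widetilde{S}^{\,\gamma}\big\rangle<\infty \quad \mbox{for a.a. } (t,x)\in Q,
\]
does not work as written. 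The quantity to be controlled is the Ces\`aro average of the \emph{moments}, $\frac1N\sum_{n=1}^N\big(\vr_{h_n}^\gamma+|\vm_{h_n}|^{\frac{2\gamma}{\gamma+1}}+S_{h_n}^\gamma\big)(t,x)$, whereas the Banach--Saks statement \eqref{K3}, and any dominating function extracted from it, controls only the Ces\`aro averages of the \emph{components}, i.e.\ the barycenters $\langle\overline{\mathcal{V}}^N_{t,x},\widetilde{\vm}\rangle=\frac1N\sum_{n=1}^N\vm_{h_n}$; by Jensen's inequality the power of the average sits \emph{below} the average of the powers, so the domination goes the wrong way. Your fallback, a Fatou bound on $\int_Q\langle\overline{\mathcal{V}}^N_{t,x},|z|^{\frac{2\gamma}{\gamma+1}}\rangle\,\dxdt$, yields only integrated control uniform in $N$, not the pointwise-in-$(t,x)$ supremum over $N$ that your three-$\varepsilon$ split requires.

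The missing ingredient is already available: the last line of \eqref{K2} gives, after the Koml\'os extraction, $\frac1N\sum_{n=1}^N E_{h_n}(t,x)\to\Ov{E}(t,x)$ for a.a.\ $(t,x)$, hence $\sup_N\frac1N\sum_{n=1}^N E_{h_n}(t,x)<\infty$ a.e.; combined with the pointwise domination $\widetilde{\vr}^\gamma+\widetilde{S}^\gamma+|\widetilde{\vm}|^{\frac{2\gamma}{\gamma+1}}\leq c\, E(\widetilde{\vr},\widetilde{\vm},\widetilde{S})$ this is precisely the uniform critical-moment bound you need (and, choosing $\alpha>0$ small, even a bound on $\langle\overline{\mathcal{V}}^N_{t,x},|g|^{1+\alpha}\rangle$, which is how the paper runs the Chebyshev estimate). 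The tail of the limit measure is then handled either by lower semicontinuity of moments under narrow convergence, as you suggest, or by monotone convergence in the truncation parameter, as in the paper. With this substitution your argument closes.
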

\begin{proof}
Decomposing $g$ into positive and negative part,  $ g = g^+ + g^-$ we may assume, without loss of generality, that $g \geq 0.$
Let $T_K$ be a family of cut-off functions,
$$
T_K (z) = \min (z,K),\ K \geq 0.
$$
We write
$$
\langle \overline{\mathcal{V}}^N_{t,x} , g(\widetilde \vr, \widetilde \vm, \widetilde S) \rangle
= \left \langle  \overline{\mathcal{V}}^N_{t,x}, T_k\left(g(\widetilde \vr, \widetilde \vm, \widetilde S) \right) \right \rangle
+ \left\langle  \overline{\mathcal{V}}^N_{t,x}, (Id - T_K) \left(g(\widetilde \vr, \widetilde \vm, \widetilde S) \right) \right \rangle.
$$
In virtue of \eqref{K1} we have
$$
\left \langle  \overline{\mathcal{V}}^N_{t,x}, T_K\left(g(\widetilde \vr, \widetilde \vm, \widetilde S) \right) \right \rangle \to \left \langle  \mathcal{V}_{t,x}, T_K\left(g(\widetilde \vr, \widetilde \vm, \widetilde S) \right) \right \rangle
\ \mbox{for a.a.}\ (t,x) \in Q.
$$
On the other hand,
\begin{eqnarray*}
&&\left |\left\langle  \overline{\mathcal{V}}^N_{t,x}, (Id - T_K) \left(g(\widetilde \vr, \widetilde \vm, \widetilde S) \right) \right \rangle \right | \leq
\left \langle \overline{\mathcal{V}}^N_{t,x}, \chi_{g(\widetilde \vr, \widetilde \vm, \widetilde S)\geq K}
g(\widetilde \vr, \widetilde \vm, \widetilde S) \right \rangle   \leq\\
&& \left \langle \overline{\mathcal{V}}^N_{t,x}, \chi_{g(\widetilde \vr, \widetilde \vm, \widetilde S) \geq K}
 \frac {1}{g(\widetilde \vr, \widetilde \vm, \widetilde S)^{\alpha }} g(\widetilde \vr, \widetilde \vm, \widetilde S)^{1 + \alpha}  \right   \rangle   \leq
\frac{1}{K^\alpha} \left \langle \overline{\mathcal{V}}^N_{t,x} ,  g(\widetilde \vr, \widetilde \vm, \widetilde S)^{1+\alpha}   \right  \rangle.
\end{eqnarray*}
Seeing that
$$
\widetilde{\vr}^\gamma + \widetilde{S}^\gamma + |\widetilde{\vm}|^{\frac{2 \gamma}{\gamma + 1}}
\aleq \frac 1 2 \frac{|\widetilde{\vm}|^2 }{\widetilde{\vr}} +  \frac{1}{\gamma - 1} \widetilde \vr^\gamma \exp \left( (\gamma - 1) \frac{\widetilde S}{\widetilde \vr} \right)
$$
we may infer there exists
$\alpha > 0$ small enough so that
$$
\left \langle  \overline{\mathcal{V}}^N_{t,x},  g(\widetilde \vr, \widetilde \vm, \widetilde S)^{1+\alpha}   \right  \rangle \aleq  \left \langle  \overline{\mathcal{V}}^N_{t,x},  E (\widetilde \vr, \widetilde \vm, \widetilde S)  \right  \rangle
= \frac{1}{N} \sum_{n = 1}^N E_{h_n} (t,x) \ \mbox{for a.a.}\ (t,x) \in Q.
$$
Consequently  using the convergence stated in \eqref{K2} and the
Levy monotone convergence theorem we may pass to the limit $K \to \infty$ obtaining the
desired conclusion.
\end{proof}

\subsection{Strong convergence in the Wasserstein distance}
\label{Vas_sec}

Relation \eqref{K1} asserts narrow convergence of the Ces\`aro averages $\overline{\mathcal{V}}^N$ to the Young measure
$\mathcal{V}$ that is (a.e.) pointwise with respect to the parameter $(t,x)$ in the physical space $Q$. It is legitimate to ask
whether this can be strengthened to the convergence in the Wasserstein metric. We first observe that
\begin{equation} \label{Vas2}
W_1\left( \overline{\mathcal{V}}^N_{t,x}, \mathcal{V}_{t,x}\right) \to 0 \
\ \mbox{as}\ N \to \infty \ \mbox{ for a.a. }  (t,x) \in Q.
\end{equation}
Indeed, as the narrow convergence has been established in \eqref{K1}, relation \eqref{Vas2} follows as soon as we observe
the convergence of first moments for  a.a. $(t,x) \in Q$, see Villani \cite[Definition~6.8, Theorem~6.9]{Villani}:
\begin{equation} \label{Vil1}
\langle \overline{\mathcal{V}}^N_{t,x}, |\widetilde{\vr}| + |\widetilde{\vm}| + |\widetilde S| + |\widetilde E| \rangle \equiv
\frac{1}{N} \sum_{h_n = 1}^N\left( \vr_{h_n} + |\vm_{h_n}| + S_{h_n} + E_{h_n}  \right)
\to \langle \mathcal{V}_{t,x}, \widetilde \vr + |\widetilde \vm| + \widetilde S + \widetilde E \rangle.
\end{equation}
However, relation \eqref{Vil1} follows directly from Lemma~\ref{Vas1} and the last statement in \eqref{K2}.

Our next goal is to investigate the convergence of the Ces\` aro sums of the marginals,
\[
\overline{\nu}^N_{t,x} \equiv \frac{1}{N} \sum_{n=1}^N \nu_{h_n} \equiv
\frac{1}{N} \sum_{n=1}^N \delta_{[\vr_{h_n}, \vm_{h_n}, S_{h_n} ](t,x)} \in \mathcal{P}(R^{d +2}).
\]
Specifically, we show that
\begin{equation} \label{Vas3}
\left \| W_q\left(\overline{\nu}^N, \nu\right) \right \|_{L^q(Q)} \to 0  \ \ \mbox{ as } N\to \infty \ \mbox{ for some }\ q  \geq 1.
\end{equation}
Using the definition of the Wasserstein distance we have the following inequality
\begin{eqnarray}
&&|W_q (\overline{\nu}_{t,x}^N,\nu_{t,x} )| \leq \langle  \overline{\nu}_{t,x}^N, |\widetilde{\vr}|^q + | \widetilde{\vm} |^q
+ |\widetilde S|^q \rangle + \langle  \nu_{t,x}, |\widetilde{\vr}|^q + | \widetilde{\vm} |^q
+ |\widetilde S|^q \rangle.
\end{eqnarray}
Now, the two terms on the right--hand side are bounded in $L^q(Q)$, $1 \leq q \leq \frac{2 \gamma}{\gamma + 1} $ due to
the convergences established in \eqref{K2} and \eqref{Fatou}.
Combining this with \eqref{Vas2} we conclude that
\begin{equation} \label{Vas3}
\left \| W_q\left(\overline{\nu}^N, \nu\right) \right \|_{L^q(Q)} \to 0   \ \ \mbox{ as } N\to \infty \
\mbox{for any}  \  1 \leq q < \frac{2 \gamma}{\gamma + 1}.
\end{equation}

Summing up the previous discussion, we can state the main result of the present paper.

\begin{Theorem} \label{Tmain}

Let $\Omega \subset R^d$, $d=1,2,3$ be a bounded Lipschitz domain. Let $\{ \vrh, \vmh, S_h, \Eh \}_{h \searrow 0}$ be a
family of admissible approximate solutions consistent with the Euler system in the sense of Definition \ref{AD1}. Let the approximate
initial data satisfy \eqref{w1}, \eqref{w2}, and let \eqref{w5} hold.

Then there exists a subsequence $h_n \to 0$ such that the following hold:
\begin{itemize}
\item
the sequence $\{ \vr_{h_n} , \vm_{h_n} , S_{h_n}, E_{h_n} \}_{n=1}^\infty$ generates a Young measure
$\{ \mathcal{V}_{t,x} \}_{(t,x) \in Q} \in \mathcal{P}(R^{d+3})$, with a marginal
$\{ \nu_{t,x} \}_{(t,x) \in Q} \in \mathcal{P}(R^{d+2})$,
\[
\left< \nu_{t,x}; g(\tvr, \tvm, \widetilde{S}) \right> \equiv \left< \mathcal{V}_{t,x}; g(\tvr, \tvm, \widetilde{S})
1_R (\widetilde{E}) \right>,\ g \in BC(R^{d+2});
\]
\item
\[
W_1 \left( \overline{\mathcal{V}}^N_{t,x} , \mathcal{V}_{t,x} \right) \to 0 \ \mbox{as}\ N \to \infty
\ \mbox{for a.a.}\ (t,x) \in Q,
\]
for the Ces\` aro averages
\[
\overline{\mathcal{V}}^N_{t,x} \equiv \frac{1}{N} \sum_{n=1}^N \delta_{[\vr_{h_n}, \vm_{h_n}, S_{h_n}, E_{h_n}](t,x)}
\in \mathcal{P}(R^{d + 3});
\]

\item

\[
\| W_q  \left( \overline{\nu}^N_{t,x} , \nu_{t,x} \right)  \|_{L^q(Q)} \to 0 \ \mbox{as}\ N \to \infty
\ \mbox{ for any }\ 1 \leq q < \frac{2 \gamma}{\gamma + 1}
\]
for the Ces\` aro averages
\[
\overline{\nu}^N_{t,x} \equiv \frac{1}{N} \sum_{n=1}^N \delta_{[\vr_{h_n}, \vm_{h_n}, S_{h_n}](t,x)}
\in \mathcal{P}(R^{d + 2}).
\]

\end{itemize}

\end{Theorem}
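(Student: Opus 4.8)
\medskip

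\noindent The theorem collects the observations made throughout Sections~\ref{w} and~\ref{K}, so the plan is mainly to assemble them in the right order. First I would record the uniform bounds: assumptions \eqref{w1}, \eqref{w2}, together with \eqref{w5} and the total energy balance \eqref{D3}, give \eqref{w3} and, after rewriting $E_h = \frac12|\vm_h|^2/\vr_h + \frac1{\gamma-1}\vr_h^\gamma\exp[(\gamma-1)s_h]$, the bounds \eqref{w6}--\eqref{w9}; in particular $\{\vr_h\}$, $\{\vm_h\}$, $\{S_h\}$ are bounded in $L^\infty(0,T;L^\gamma(\Omega))$, $L^\infty(0,T;L^{2\gamma/(\gamma+1)}(\Omega;R^d))$, $L^\infty(0,T;L^\gamma(\Omega))$, and $\{E_h\}$ is bounded in $L^\infty(0,T;L^1(\Omega))$. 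Hence $[\vr_h,\vm_h,S_h,E_h]$ is bounded in $L^1(Q;R^{d+3})$ and, by coercivity of the energy, the space--time average $\frac1{|Q|}\int_Q\mathcal{V}^h_{t,x}\,\dxdt$ is tight; the fundamental theorem on Young measures (Ball~\cite{Ball2}, Pedregal~\cite{PED1}) then produces, along a subsequence, the Young measure $\mathcal{V}_{t,x}\in\mathcal{P}(R^{d+3})$, and the marginal $\nu_{t,x}$ is the one introduced in Section~\ref{w}. This settles the first item.

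\medskip

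\noindent Next I would invoke the Young-measure form of Prokhorov's theorem due to Balder~\cite{Bald1},~\cite[Theorem~3.15]{Bald} to obtain the narrow convergence \eqref{K1} of the Ces\`aro averages $\overline{\mathcal{V}}^N_{t,x}\to\mathcal{V}_{t,x}$ for a.a.~$(t,x)$; passing to a further subsequence and combining Koml\'os~\cite{Kom} with~\cite[Lemma~6.5.17]{CFV} I would also secure the pointwise convergence of the barycenters \eqref{K2}, whence the Banach--Sacks theorem upgrades the $\vr$-, $\vm$- and $S$-barycenters to the norm convergence \eqref{K3}. The technical core is Lemma~\ref{Vas1}: truncating $g$ at height $K$, using \eqref{K1} on the truncated part and the energy on the tail --- here one picks $\alpha>0$ small enough that $q(1+\alpha)$ still lies below $\tfrac{2\gamma}{\gamma+1}$ and uses the coercivity estimate $\widetilde\vr^\gamma+\widetilde S^\gamma+|\widetilde\vm|^{2\gamma/(\gamma+1)}\aleq \frac12|\widetilde\vm|^2/\widetilde\vr+\frac1{\gamma-1}\widetilde\vr^\gamma\exp((\gamma-1)\widetilde S/\widetilde\vr)$ of~\cite{BFH_19} --- one obtains, for a.a.~$(t,x)$, $\langle\overline{\mathcal{V}}^N_{t,x},g(\widetilde\vr,\widetilde\vm,\widetilde S)\rangle\to\langle\mathcal{V}_{t,x},g(\widetilde\vr,\widetilde\vm,\widetilde S)\rangle$ for every continuous $g$ with $|g(z)|\le1+|z|^q$, $1\le q<\tfrac{2\gamma}{\gamma+1}$.

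\medskip

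\noindent For the second item I would use Villani's characterization~\cite[Definition~6.8, Theorem~6.9]{Villani}: since \eqref{K1} already gives narrow convergence, $W_1(\overline{\mathcal{V}}^N_{t,x},\mathcal{V}_{t,x})\to0$ for a.a.~$(t,x)$ follows once the first moments converge, i.e.\ $\langle\overline{\mathcal{V}}^N_{t,x},|\widetilde\vr|+|\widetilde\vm|+|\widetilde S|+|\widetilde E|\rangle\to\langle\mathcal{V}_{t,x},\widetilde\vr+|\widetilde\vm|+\widetilde S+\widetilde E\rangle$; the $\vr,\vm,S$ contributions are Lemma~\ref{Vas1} with $q=1$, and the $E$ contribution is the last line of \eqref{K2} (with $E_{h_n}\ge0$, so $|\widetilde E|=\widetilde E$). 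For the third item, Lemma~\ref{Vas1} with $g(z)=|z|^q$ gives a.e.\ convergence of the $q$-th moments of $\overline\nu^N_{t,x}$ to those of $\nu_{t,x}$, hence, again by Villani's theorem, $W_q(\overline\nu^N_{t,x},\nu_{t,x})\to0$ for a.a.~$(t,x)$. To promote this to $L^q(Q)$ I would use the elementary bound $W_q(\overline\nu^N_{t,x},\nu_{t,x})^q\le\langle\overline\nu^N_{t,x},|\widetilde\vr|^q+|\widetilde\vm|^q+|\widetilde S|^q\rangle+\langle\nu_{t,x},|\widetilde\vr|^q+|\widetilde\vm|^q+|\widetilde S|^q\rangle$, note that for $q<\tfrac{2\gamma}{\gamma+1}$ the right-hand side is bounded in $L^r(Q)$ with $r=\tfrac{2\gamma}{q(\gamma+1)}>1$ (the Ces\`aro average $\frac1N\sum(\vr_{h_n}^q+|\vm_{h_n}|^q+S_{h_n}^q)$ by \eqref{w6}--\eqref{w8}, the fixed function $\langle\nu_{t,x},\cdot\rangle$ by \eqref{Fatou}), hence equi-integrable, and conclude by the Vitali convergence theorem. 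Combining the three items finishes the proof.

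\medskip

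\noindent The step I expect to be the real obstacle is Lemma~\ref{Vas1} together with the sharpness of the exponent $q<\tfrac{2\gamma}{\gamma+1}$: the momentum is only controlled in $L^{2\gamma/(\gamma+1)}$, so all moment estimates --- both the pointwise convergence of moments and the equi-integrability needed for the $L^q(Q)$ statement --- must be pushed through this single exponent, and the truncation closes only because the pressure part of the energy is strictly superlinear in the relevant variables, i.e.\ because of the coercivity inequality imported from~\cite{BFH_19}. Everything else --- existence of the generated Young measure, the $\K$-convergence inputs of Balder, Koml\'os and Banach--Sacks, and Villani's reformulation of Wasserstein convergence --- amounts to quoting the appropriate abstract statement.
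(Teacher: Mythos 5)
Your proposal is correct and follows essentially the same route as the paper: uniform bounds from \eqref{w1}--\eqref{w9} generating the Young measure and its marginal, Balder's Prokhorov theorem plus Koml\'os/Castaing--de Fitte--Valadier and Banach--Saks for \eqref{K1}--\eqref{K3}, the truncation argument of Lemma~\ref{Vas1}, Villani's moment characterization for the pointwise $W_1$ convergence, and the $q$-th moment bound with equi-integrability (Vitali) for the $L^q(Q)$ statement on $W_q(\overline{\nu}^N,\nu)$. The only difference is cosmetic: you make explicit the application of Lemma~\ref{Vas1} to $g(z)=|z|^q$ to get a.e.\ convergence of $W_q$ before applying Vitali, a step the paper leaves implicit.
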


\subsection{Convergence in first variation}

Using \eqref{K3} we may show convergence of the first variation of the measures $\overline{\nu}^N_{t,x} $. Indeed,
\[
\begin{split}
\left<  \overline{\nu}^N_{t,x}; \left| \widetilde \vr - \left< \overline{\nu}^N_{t,x} ; \widetilde{\vr} \right> \right| \right>
&= \frac{1}{N} \sum_{n=1}^N \left| \vr_{h_n}(t,x) - \left( \frac{1}{N} \sum_{n=1}^N \vr_{h_n}(t,x) \right)  \right|
\\
&=
\frac{1}{N} \sum_{n=1}^N \left| \vr_{h_n}(t,x) - \vr(t,x)
 + \vr(t,x) - \left( \frac{1}{N} \sum_{n=1}^N \vr_{h_n}(t,x) \right)  \right|,
\end{split}
\]
where, by virtue of \eqref{K2},
\[
 \vr(t,x) - \left( \frac{1}{N} \sum_{n=1}^N \vr_{h_n}(t,x) \right)  \to 0
\ \mbox{ as }\ N \to \infty.
\]
Next,
\[
\frac{1}{N} \sum_{n=1}^N \left| \vr_{h_n}(t,x) - \vr(t,x)   \right| =
\left< \overline{\nu}^N_{t,x} ; \chi_R \left(\left| \widetilde{\vr} - \vr(t,x) \right| \right) \right>
+ \left< \overline{\nu}^N_{t,x} ; (1 - \chi_R) \left(\left| \widetilde{\vr} - \vr(t,x) \right| \right) \right>,
\]
where $\chi_R$ is a bounded cut--off function. In accordance with \eqref{K1}, we have
\[
\left< \overline{\nu}^N_{t,x} ; \chi_R \left(\left| \widetilde{\vr} - \vr(t,x) \right| \right) \right>
\to \left< \nu_{t,x}; \chi_R \left(\left| \widetilde{\vr} - \vr(t,x) \right| \right) \right>
\ \mbox{ for a.a. } \ (t,x) \in Q \ \mbox{ and any fixed }\ R,
\]
while, by virtue of \eqref{w1},
\[
\int_0^T \intO{ \left| \left< \overline{\nu}^N_{t,x} ; (1 - \chi_R) \left(\left| \widetilde{\vr} - \vr(t,x) \right| \right) \right>
\right| } \dt \to 0 \ \mbox{ as }\ R \to \infty \ \mbox{ uniformly in }\ N.
\]
As, finally,
\[
\left< \nu_{t,x}; \chi_R \left(\left| \widetilde{\vr} - \vr(t,x) \right| \right) \right>
\to \left< \nu_{t,x};  \left(\left| \widetilde{\vr} - \left< \mathcal{V}_{t,x}; \widetilde{\vr} \right> \right| \right) \right>
\ \mbox{as}\ R \to \infty \ \mbox{ for a.a.}\ (t,x) \in Q,
\]
we may infer that
\begin{equation} \label{K5}
\left\| \left< \overline{\nu}^N_{t,x} ; \left| \widetilde \vr - \left< \overline{\nu}^N_{t,x} ; \widetilde{\vr} \right> \right| \right>
- \left< \nu_{t,x}; \left|\widetilde{\vr} - \left< \nu_{t,x}; \widetilde{\vr} \right> \right| \right> \right\|_{L^1(Q)} \to 0
\ \mbox{ as }\ N \to \infty.
\end{equation}
Analogously, we have
\begin{equation} \label{K6}
\left\| \left< \overline{\nu}^N_{t,x} ; \left| \widetilde \vm - \left< \overline{\nu}^N_{t,x} ; \widetilde{\vm} \right> \right| \right>
- \left< \nu_{t,x}; \left|\widetilde{\vm} - \left< \nu_{t,x}; \widetilde{\vm} \right> \right| \right> \right\|_{L^1(Q)} \to 0
\ \mbox{ as }\ N \to \infty
\end{equation}
and
\begin{equation} \label{K51}
\left\| \left< \overline{\nu}^N_{t,x} ; \left| \widetilde S - \left< \overline{\nu}^N_{t,x} ; \widetilde{S} \right> \right| \right>
- \left< \nu_{t,x}; \left|\widetilde{S} - \left< \nu_{t,x}; \widetilde{S} \right> \right| \right> \right\|_{L^1(Q)} \to 0
\ \mbox{ as }\ N \to \infty.
\end{equation}

Unlike $\vrh$, $\vmh$, $S_h$ the energy $\Eh$ is not equi--integrable therefore its momentum cannot be estimated as in
\eqref{K5}, \eqref{K6}. Although it may be seen that \eqref{K5}--\eqref{K6} follow directly from the convergence in the Wasserstein distance as shown in Section~\ref{Vas_sec}, the
convergence of the variance
\begin{eqnarray}
&& \left< \overline{\nu}^N_{t,x} ; \left|  \widetilde \vr
-  \left< \overline{\nu}^N_{t,x} ;  \widetilde \vr \right> \right| \right>
+
\left< \overline{\nu}^N_{t,x} ; \left|  \widetilde \vm
-  \left< \overline{\nu}^N_{t,x} ;  \widetilde \vm \right> \right| \right>
+
\left< \overline{\nu}^N_{t,x} ; \left|  \widetilde S
-  \left< \overline{\nu}^N_{t,x} ;  \widetilde S \right> \right| \right> =
\nonumber \\
&& \frac{1}{N} \sum_{n=1}^N \left| \vr_{h_n}  -  \frac{1}{N} \sum_{j=1}^N  \vr_{h_j}\right|
+
\frac{1}{N} \sum_{n=1}^N \left| \vm_{h_n}  -  \frac{1}{N} \sum_{j=1}^N  \vm_{h_j}\right|
+
\frac{1}{N} \sum_{n=1}^N \left| S_{h_n}  -  \frac{1}{N} \sum_{j=1}^N  S_{h_j}\right|
\end{eqnarray}
is directly computable performing simulations on $N$ different grids with the size $h_n$, $n=1,\dots, N.$

\section{Finite volume methods} \label{Brenner}

In this section we demonstrate robustness of the new concept of {$\mathcal{K}$}-convergence and apply it to two different finite volume schemes.  We consider first a finite volume method that has been derived in our recent paper \cite{FLM_18}, based
on the work of Brenner \cite{BREN1} and Guermond and Popov \cite{GuePop}. In what follows we will refer to it as the \emph{FLM finite volume method}. The latter is an example of a consistent entropy stable finite volume numerical scheme whose solutions $[\vrh, \mh, E_h]$ satisfy (\ref{D1})--(\ref{D4}) and thus serves as an example to illustrate the abstract theory and rigorous convergence proofs.

The second finite volume method is a well-known second order finite volume method based on the generalized Riemann problem, see, e.g., \cite{ben1984, ben2003, ben2007, Li2016, Li2007}. In what follows we will refer to it as the \emph{GRP finite volume method}. Its theoretical convergence analysis  may be an interesting question for future study. We will consider the
classical benchmark problems, the Kelvin-Helmholtz and Richtmyer-Meshkov tests. Due to the oscillatory solution behaviour the standard mesh convergence study will
indicate that neither of the finite volume methods converges. However, we will demonstrate the strong convergence of the Ces\`aro averages as well as the first variance for both finite volume methods. For the empirical means of the corresponding Dirac distributions the convergence in the Wasserstein distance will be shown as well.

\subsection{FLM finite volume method} \label{FLM}

We assume that the physical space is a polyhedral domain
$\Omega \subset R^d$, $d=2,3$, decomposed into compact polygonal sets (tetrahedral or  parallelepipedal elements)
\[
 \Ov{\Omega} = \bigcup_{K \in \grid_h} K.
\]
We denote by $h$ the maximal size of the mesh,
$$
h=\max_{K \in \grid_h} h_K
$$
with $h_K$ being the diameter of an element $K$.
The elements are sharing either a common face, edge, or vortex. We assume that the mesh $\grid_h$ satisfies the standard regularity assumptions, cf.~\cite{ciarlet,EyGaHe}. More precisely, let a parameter $\theta_h$ be defined as
\begin{equation}\label{reg}
	\theta_h = \inf \left\{ \frac{\xi_K}{h_K}, K \in {\mathcal{T}}_h \right\},
\end{equation}
where $\xi_K$ stands for the diameter of the largest ball included in $K$.
Then the mesh $\grid_h$ is said to be regular and quasi-uniform,  if there exists positive real numbers
$c_0$ and $\theta_0$ independent of $h$ such that
\begin{equation}\label{reg1-}
 	 \theta_h \ge \theta_0 \text{ and } c_0 h \leq h_K   ,
\end{equation}
respectively.

The set of all faces is denoted by $\Sigma_h,$ while
$\Sigma^{int}_h=\Sigma_h\backslash\partial\Omega$ stands for the set of all interior faces. Each face
is associated with a normal vector $\vc{n}$.
In what follows, we shall suppose
\[
|K|_d \approx h^d, \ |\sigma|_{d-1} \approx h^{d-1} \ \mbox{for any}\ K \in \grid_h, \ \sigma \in \Sigma.
\]

Let us denote by $\mathcal{P}^0_h$ a space of piecewise constant function on  $\grid_h$.
For a function $f_h \in \mathcal{P}^0_h$ we define
\begin{align*}
f_h^{\rm out}(x) &= \lim_{\delta \to 0+} f_h(x + \delta \vc{n}),\ &
\av{f_h} &= \frac{f_h^{\rm in}(x) + f_h^{\rm out}(x) }{2},\ &
\\
f_h^{\rm in}(x) &= \lim_{\delta \to 0+} f_h(x - \delta \vc{n}), \ & [[ f_h ]] &= f_h^{\rm out}(x) - f_h^{\rm in}(x)
\end{align*}
whenever $x \in \sigma \in \Sigma^{int}_h$.

Finally, let
$D_t$ denote the first order backward finite difference approximating the time derivative, i.e.
$$
D_t f_h (t) = \frac{f_h(t_{k+1}) - f_h(t_k)}{\Delta t}, \qquad f_h(t_k), f_k(t_{k+1}) \in \mathcal{P}^0_h,
$$
where $\Delta t$ is a mesh step parameter on a time interval $[0,T]$, $t_k = k \Delta t$  and $t \in (t_k, t_{k+1}],\ k=1,2,\dots$. \\[0.2cm]

The \emph{numerical solutions} are piecewise constant functions on $(0,T) \times \Omega$, such that \,
$\vrh(t)\equiv \vrh(t_{k+1}) \in \mathcal{P}^0_h(\Omega_h)$, $\vc{m}_h(t) \equiv \vc{m}_h(t_{k+1}) \in \mathcal{P}^0_h(\Omega_h; R^d)$, and $E_h(t) \equiv E_h(t_{k+1}) \in \mathcal{P}^0_h(\Omega_h),$ \ $t \in (t_k, t_{k+1}] $ satisfying the following discrete equations:

\begin{itemize}
\item {\bf Continuity equation}
\begin{equation} \label{n2}
\intOh{ D_t \vrh \Phi } - \sum_{ \sigma \in \Sigma^{int}_h } \intSh{  F_h(\vrh,\vuh)
[[\Phi]]  } = 0 \ \mbox{for any}\ \Phi \in \mathcal{P}^0_h (\Omega_h),
\end{equation}
where
\[
\vc{u}_h = \frac{\vc{m}_h}{\vr_h}.
\]
\item {\bf Momentum equation}
\begin{equation} \label{n3}
\begin{split}
\intOh{ D_t \mh \cdot \bfPhi } &- \sum_{ \sigma \in \Sigma^{int}_h } \intSh{ {\bm{F}}_h(\mh,\vuh)
\cdot [[\bfPhi]]  }- \sum_{ \sigma \in \Sigma^{int}_h } \intSh{ \Ov{p_h} \vc{n} \cdot [[ \bfPhi ]] } \\
&= - h^{\alpha - 1} \sum_{ \sigma \in \Sigma^{int}_h } \intSh{ [[ \vu_h ]] \cdot [[ \bfPhi ]] } \ \mbox{for all}\
\bfPhi \in \mathcal{P}^0_{h} (\Omega_h, R^d),
\end{split}
\end{equation}
where
\[
p_h = (\gamma - 1) \left( E_h - \frac{1}{2} \frac{|\vc{m}_h|^2}{\vr_h} \right).
\]

\item {\bf Energy equation}
\begin{equation} \label{n4}
\begin{split}
\intOh{ D_t E_h \Phi } &- \sum_{ \sigma \in \Sigma^{int}_h } \intSh{  F_h(E_h,\vu_h)
[[\Phi]]  }\\
&- \sum_{ \sigma \in \Sigma^{int}_h } \intSh{ \Ov{ p_h } [[ \Phi \vu_h ]] \cdot \vc{n} }
+ \sum_{ \sigma \in \Sigma^{int}_h } \intSh{ \Ov{ p_h \Phi} [[\vu_h]] \cdot \vc{n} } \\
&= - h^{\alpha - 1} \sum_{ \sigma \in \Sigma^{int}_h } \intSh{ [[ \vu_h ]] \cdot \Ov{\vu_h}   [[ \Phi ]] } \ \mbox{for all}\
\Phi \in \mathcal{P}^0_{h} (\Omega_h).
\end{split}
\end{equation}
\end{itemize}

The function $F_h$ stays for a \emph{finite volume numerical flux} approximating the physical flux. In the literature one can find already a large variety of different numerical fluxes proposed for the hyperbolic conservation laws and the Euler equations in particular, see, e.g.,
\cite{GR96, feist1, feist2, Kroner, LeVeque1, LeVeque2}
and the references therein. In \cite{FLM_18} we have used the following numerical flux function based on the upwinding strategy
\begin{align}\label{num_flux}
F_h(r_h,\vu_h)={Up}[r_h, \vuh] - \mu_h [[ r_h ]], \qquad r_h \in \mathcal{P}^0_h,
\end{align}
where $\mu_h\geq 0$ and ${Up}[r_h, \vuh]$  is a classical upwinding of $r_h$.
This means that depending on the sign of the normal component of the velocity $\av{\vuh} \cdot \vc{n}$,
$r_h$ is taken to be either the upward or downward value of the neighbouring cells. More precisely,  we have
\begin{align}\label{Up}
Up [r_h, \vu_h] = \av{r_h} \ \av{\vu_h} \cdot \vc{n} - \frac{1}{2} |\av{\vu_h} \cdot \vc{n}| [[r_h]] =
r_h^{\rm in} [\av{\vu_h} \cdot \vc{n}]^+ + r_h^{\rm out} [\av{\vu_h} \cdot \vc{n}]^-.
\end{align}
The numerical flux $\bm{F}_h(\bm{m}_h,\bm{u}_h)$ denotes the corresponding vector--valued flux function for the approximation of a physical flux in the momentum equation.  Note, that our upwinding  ${Up}[r_h, \vu_h], $  $r_h = \vr_h, \vc{m}_h, E_h$,  is
based only on the sign of the normal component of velocity, that is  typically used in incompressible flow approximations, instead of the sign of the eigenvalues as it is standard in the flux--vector splitting schemes for compressible flows.
Instead of the corresponding eigenvalues of the underlying hyperbolic system we add a numerical diffusion term $-\mu_h [[ r_h ]]  $  for the conservative variables. In numerical simulations one can take, for instance, $h^{\beta} \lesssim \mu_h \lesssim 1$ with $0 \leq \beta <1$.

\subsubsection{Stability and consistency}
The finite volume method \eqref{n2}-\eqref{n4} has the following remarkable properties that have been proven in \cite{FLM_18}.
\begin{itemize}
\item \emph{Positivity of the discrete density and temperature:} numerical density and temperature remain strictly positive on any finite time interval.
\item \emph{Entropy stability:} discrete entropy inequality holds, cf.~the concept of entropy stable numerical schemes~\cite{Tad86,Tad03}.
\item \emph{Minimum entropy principle:} the discrete entropy attains its minimum at the initial time.
\item \emph{Weak BV estimates:} discrete entropy inequality allows us to control suitable weak BV norms of the discrete density, temperature and velocity.
\end{itemize}

It is to be pointed out that the numerical diffusions $\mu_h [[\vr_h]], \mu_h [[\bm{m}_h]]$ and $h^{\alpha-1} [[\bm{u}_h]], \ h^{\alpha-1} [[\bm{u}_h]] \av{\bm{u}_h}$ arising from the numerical flux and additional $h^\alpha$-diffusive terms, respectively,  allow us to prove \emph{positivity of the discrete density} on each $\grid_h$, i.e.
\begin{itemize}
\item[]
for any $T>0,$ there exists $\underline{\vr}=\underline{\vr}(h,T)>0,$ such that $\vrh(t)\geq \underline{\vr}>0$ for all $t\in [0,T]$,  if the initial data satisfy the corresponding condition.
\end{itemize}

\noindent Further, the minimum entropy principle yields the positivity of the discrete pressure and
temperature, see \cite{FLM_18}. Taking into account a priori estimates that yield uniform bounds on $\vr_h \in L^\infty((0,T), L^\gamma(\Omega_h)),$ $ \bm{m}_h \in  L^\infty((0,T), L^{\frac{2\gamma}{\gamma +1}}(\Omega_h)),$ and  $E_h \in L^\infty((0,T), L^1(\Omega_h))$
and the weak BV estimates resulting from the discrete entropy inequality, we obtain the following consistency result of the finite volume method
\eqref{n2}--\eqref{n4}, see Section~\ref{w} and \cite{FLM_18}.

\begin{Proposition}[Consistency of the FLM finite volume method] \label{Tm2}

Assume that the initial data $\vr_{0,h}$, $\vc{m}_{0,h}$, $E_{0,h}$ satisfy
\[
\vr_{0,h} \geq \underline{\vr} > 0,\ E_{0,h} - \frac{1}{2} \frac{ |\vc{m}_{0,h}|^2 }{\vr_{0,h}} > 0.
\]
Let $[\vr_h, \vc{m}_h, E_h]$ be the unique solution of the finite volume method \eqref{n2}--\eqref{n4}
on the time interval $[0,T]$.  Then
\begin{equation} \label{cP1}
\left[ \intOh{ \vr_h \varphi } \right]_{t=0}^{t = \tau} =
\int_0^\tau \intOh{ \left[ \vr_h \partial_t \varphi + \vc{m}_h \cdot \Grad \varphi \right]} \dt  + \int_0^T
e_{1,h} (t, \varphi) \dt
\end{equation}
for any $\varphi \in C^1([0,T] \times \Ov{\Omega}_h)$;
\begin{equation} \label{cP2}
\left[ \intOh{ \vc{m}_h \bfphi } \right]_{t=0}^{t = \tau} =
\int_0^\tau \intOh{ \left[ \vc{m}_h \cdot \partial_t \bfphi + \frac{\vc{m}_h \otimes \vc{m}_h} {\vr_h} : \Grad \bfphi
+ p_h \Div \bfphi \right]} \dt  + \int_0^T
e_{2,h} (t, \bfphi) \dt
\end{equation}
for any $\bfphi \in C^1([0,T] \times \Ov{\Omega}_h; R^d)$, $\bfphi \cdot \vc{n}|_{\Omega_h} = 0$;
\begin{equation} \label{cP3}
\intOh{ E_h(t) } = \intOh{ E_{0,h} };
\end{equation}
\begin{equation} \label{cP4}
\left[ \intOh{ \vr_h \chi(s_h) \varphi } \right]_{t=0}^{t = \tau} \geq
\int_0^\tau \intOh{ \left[ \vr_h \chi (s_h) \partial_t \varphi + \chi(s_h) \vc{m}_h \cdot \Grad \varphi \right]} \dt  + \int_0^T
e_{3,h} (t, \varphi) \dt
\end{equation}
for any $\varphi \in C^1([0,T] \times \Ov{\Omega}_h)$, $\varphi \geq 0$, and any $\chi$,
\[
\chi :  R \to  R \ \mbox{a non--decreasing concave function}, \ \chi(s) \leq \av{\chi} \ \mbox{for all}\ s \in \ R.
\]

If, in addition,
\begin{equation} \label{cP5}
 h^\beta \aleq \mu_h \aleq 1, \ 0 \leq \beta < 1,\ 0< \alpha < \frac{4}{3},
\end{equation}
and
\begin{equation} \label{cP6}
0 < \underline{\vr} \leq \vr_h(t), \ \vt_h(t) \leq \Ov{\vt} \ \mbox{for all}\ t \in [0,T] \ \mbox{uniformly for}\ h \to 0,
\end{equation}
then
\[
\| e_{j,h} (\cdot, \varphi ) \|_{L^1(0,T)} \aleq h^\delta \| \varphi \|_{C^1} \ \mbox{for some}\ \delta > 0, \ j=1,2,3.
\]
\end{Proposition}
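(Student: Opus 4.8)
The plan is to derive the identities \eqref{cP1}--\eqref{cP4} from the fully discrete scheme \eqref{n2}--\eqref{n4} by inserting, for a given smooth test function $\varphi$ (resp.\ $\bfphi$), its piecewise--constant projection $\Pi_h\varphi$ (resp.\ $\Pi_h\bfphi$) as the discrete test function, and then to estimate the consistency errors $e_{j,h}$ by combining the a priori bounds $\vr_h\in L^\infty(0,T;L^\gamma)$, $\vc{m}_h\in L^\infty(0,T;L^{2\gamma/(\gamma+1)})$, $E_h\in L^\infty(0,T;L^1)$ with the weak BV estimates generated by the discrete entropy inequality.

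First I would dispose of the time discretization. Abel (discrete) summation by parts rewrites $\int_0^\tau\intOh{D_t\vr_h\,\Pi_h\varphi}\,\dt$ as the boundary term $\big[\intOh{\vr_h\,\Pi_h\varphi}\big]_{t=0}^{t=\tau}$ minus a term in which $\vr_h$ is paired against the forward difference of $\Pi_h\varphi$; since the latter differs from $\partial_t\varphi$ by $O(\Delta t+h)\|\varphi\|_{C^1}$ uniformly and $\vr_h$ is bounded in $L^1$, this contributes to $e_{1,h}$ a term of the asserted order, and likewise for \eqref{n3}. For the energy equation \eqref{n4} the admissible choice $\Phi\equiv1$ makes the flux and $h^{\alpha-1}$ terms vanish and cancels the two pressure terms, giving \eqref{cP3} exactly. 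Replacing $\varphi$ by $\varphi-\Pi_h\varphi=O(h)\|\varphi\|_{C^1}$ in the remaining convective volume integrals costs only $O(h)$.

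The main step is the interface (flux and pressure) sums. On an interior face $[[\Pi_h\varphi]]=O(h)\|\varphi\|_{C^1}$, and by \eqref{num_flux}--\eqref{Up} the numerical flux splits into a centred part $\av{r_h}\,\av{\vuh}\cdot\vc{n}$ and a dissipative part $-\big(\tfrac12|\av{\vuh}\cdot\vc{n}|+\mu_h\big)[[r_h]]$; discrete summation by parts turns the centred part into a discrete version of $\int\vr_h\vuh\cdot\nabla\varphi$ up to $O(h)$ interpolation errors, and the dissipative part is bounded via Cauchy--Schwarz by the product of the factor $\big(\sum_\sigma\int_\sigma(|\av{\vuh}\cdot\vc{n}|+\mu_h)[[r_h]]^2\big)^{1/2}$, controlled by the weak BV estimates (here the minimum entropy principle and the $h^\alpha$--diffusive terms in \eqref{n3}--\eqref{n4} are essential), and a second factor that, owing to $[[\Pi_h\varphi]]=O(h)$, the mesh relations $|\sigma|\approx h^{d-1}$, $|K|\approx h^d$, and the uniform bounds on $\vr_h,\vc{m}_h,\vt_h$ from \eqref{cP6}, carries a strictly positive power of $h$. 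The pressure terms $\Ov{p_h}\,\vc{n}\cdot[[\bfphi]]$ and the transport--type corrections in \eqref{n4} are handled identically, using that $\Ov{p_h}$ differs from the one-sided traces of $p_h$ by $O([[p_h]])$ and the uniform bound on $p_h$ following from \eqref{cP6}. Collecting all powers of $h$ and inserting $h^\beta\lesssim\mu_h\lesssim1$, $0<\alpha<4/3$ leaves a net factor $h^\delta$ with some $\delta>0$, hence $\|e_{j,h}(\cdot,\varphi)\|_{L^1(0,T)}\aleq h^\delta\|\varphi\|_{C^1}$ for $j=1,2$.

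For the entropy inequality \eqref{cP4} I would invoke the discrete renormalized continuity equation (equivalently, the discrete entropy inequality of \cite{FLM_18}) tested against a $\chi'(s_h)$--type multiplier: concavity of $\chi$ together with Jensen's inequality at the faces gives the correct sign of the dissipative interface contributions and of the discrete production term, after which the same interpolation and weak-BV bookkeeping as above yields $e_{3,h}$ with the stated rate. I expect the genuinely delicate point to be exactly this last bookkeeping --- balancing the negative powers of $h$ coming from $\mu_h$ and $h^{\alpha-1}$ against the positive powers coming from $[[\Pi_h\varphi]]=O(h)$ and from the weak BV bounds; optimising this trade-off is precisely what forces the admissible ranges $0\le\beta<1$, $0<\alpha<4/3$ in \eqref{cP5}.
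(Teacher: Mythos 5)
The paper itself does not prove Proposition \ref{Tm2}: the statement is imported verbatim from the companion work \cite{FLM_18}, to which the reader is referred. Your outline follows exactly the route taken there --- project the smooth test function onto $\mathcal{P}^0_h$, sum by parts in time and across interior faces, identify the centred part of the upwind flux \eqref{num_flux}--\eqref{Up} with the convective volume integral up to interpolation errors, take $\Phi\equiv 1$ in \eqref{n4} to obtain \eqref{cP3} exactly, and control the dissipative interface sums by Cauchy--Schwarz against weak BV quantities. Nothing in the plan is wrong, and the role you assign to \eqref{cP5}, \eqref{cP6} is the correct one.

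As a proof, however, this is a plan rather than an argument, and the two places where essentially all the work sits are precisely the ones you defer. First, the weak BV estimates --- bounds of the type $\sum_{\sigma\in\Sigma^{int}_h}\int_\sigma \mu_h\,[[\vr_h]]^2\ds$ and $h^{\alpha-1}\sum_{\sigma\in\Sigma^{int}_h}\int_\sigma |[[\vu_h]]|^2\ds$ together with their analogues for $\vt_h$ --- are not free: they must be extracted from the discrete entropy balance and the total energy conservation of the specific scheme \eqref{n2}--\eqref{n4}, and this extraction is where hypothesis \eqref{cP6} is actually consumed. You invoke them as known. Second, the exponent bookkeeping that converts those estimates into $\| e_{j,h}(\cdot,\varphi)\|_{L^1(0,T)} \aleq h^\delta \|\varphi\|_{C^1}$ is the nontrivial quantitative content of the statement; asserting that ``collecting all powers of $h$ leaves a net factor $h^\delta$'' without exhibiting the powers does not establish the admissible ranges $0\le\beta<1$, $0<\alpha<\tfrac43$ (note the Remark following the Proposition: these ranges change with the dimension and the mesh type, so they cannot be guessed, only computed). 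The same applies to \eqref{cP4}: the discrete renormalized entropy inequality for a general nondecreasing concave $\chi$ requires a face-by-face sign argument combining concavity with the upwind structure, which you gesture at but do not carry out. In short: correct strategy, matching the cited reference, but the quantitative core that the Proposition actually asserts is missing.
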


\begin{Remark}
In the case of  uniform rectangular/cubic elements the conclusion of Proposition~\ref{Tm2} remains valid  for $0\leq \mu_h\aleq 1,$ and $\bfphi \in C^1([0,T]; C^2(\Ov{\Omega}_h; R^d))$, $\bfphi \cdot \vc{n}|_{\Omega_h} = 0.$ Moreover, if $d=2$ then the condition \eqref{cP5} can be
relaxed  to $ 0 \leq \beta < 1, \ 0 <  \alpha < 2. $ Let us point out that omitting the artificial diffusion terms, i.e.~$\mu_h = 0$ and removing the terms with the factor $h^{\alpha-1},$ yields the upwind finite volume scheme.
\end{Remark}

Under the extra hypothesis \eqref{cP6}, the numerical solutions
$\{ \vr_h, \vm_h, S_h, E_h \}_{h \searrow 0}$ resulting from the scheme \eqref{n2}--\eqref{n4}  represent a
family of {\em admissible consistent approximate solutions} in the sense of Definition \ref{AD1}. In particular,
the conclusion of Theorem \ref{Tmain} as well as other
results obtained in Section \ref{K} can be applied.

\subsection{GRP finite volume method} \label{GRP}

The generalized Riemann problem finite volume method is one of successful standard  numerical methods to simulate the Euler equations.
It was developed as an analytical second order accurate extension of the classical Godunov finite volume method.
Originally the method was based on the Lagrangian formulation \cite{ben1984,ben2003}. A direct Eulerian GRP scheme was presented in \cite{ben2007,ben2006,Li2007} by employing the regularity property of the Riemann invariants. Theoretically, a close coupling between the spatial and temporal evolution is recovered through the analysis of detailed wave interactions in the GRP scheme. The GRP method has been applied successfully to develop high resolution schemes and used for many engineering problems, see, e.g., \cite{ben2003, Cheng2019, Li2016}
and the references therein.

To describe the main ingredients of the GRP finite volume method let us
consider a two-dimensional regular rectangular grid with mesh cells $K_{ij} \equiv [x_{i-1/2},x_{i+ 1/2}] \times [y_{j-1/2},y_{j+1/2}]$, $x_{i\pm 1/2} = x_i \pm \Delta x/2$, $ y_{j\pm 1/2} = y_j \pm \Delta y/2$, $\Delta x = h = \Delta y$, $i,j \in \Bbb N.$
The basic idea of the GRP scheme consists of replacing the exact solution $\bU \equiv [\vr, \vm, E] $  on a mesh cell
$K_{ij}$ by a piecewise linear function
\begin{align}
\bU(x,y,t)=\bU(x_i, y_j,t)+\bU_x(x_i,y_j,t) (x-x_i)+\bU_y(x_i,y_j,t)(y-y_j),\label{rec}
\end{align}
and analytically solve locally at each cell interface $\sigma \in \partial K_{ij}$
an one-dimensional generalized Riemann problem with the initial data
\begin{align}
\bU(\xi,t=0) = \left\{\begin{array}{ll}
\bU_L+\bU'_L \xi, & \xi <0 \\
\bU_R+\bU'_R \xi, & \xi >0.
\end{array}
\right.
\end{align}

Let us now denote by $\bm{F} = (\bm{F}_1, \bm{F}_2)$  the physical flux of the Euler equations (\ref{i1}), i.e.
$$
\bm{F} = \left[\bm{m},  \frac{\vm \otimes \vm}{\vr} + p \Bbb I, \left( E + p \right) \frac{\vm}{\vr}\right].
$$
Then the GRP numerical flux  on  the cell interface $\sigma = (x_{i+1/2},y)$, $y \in [y_{j-1/2}, y_{j+1/2}],$  is given as
\begin{align}
&  \bm{F}_h \equiv \bm{F}_{i+1/2,j}^{k+1/2}=\bm{F}\left(\bU_{i+1/2,j}^{k+1/2}\right), \qquad   \bU_{i+1/2,j}^{k+1/2}=\bU_{i+1/2,j}^{k}+\frac{\Delta t}{2}(\bU_t)_{i+1/2,j}^{k}.\notag
\end{align}
Here
$\bU_{ij}^{k}$ is the cell average of $\bU(x,y,t_k)$ over the control volume $K_{ij}$ evaluated at time $t_k$.

\noindent The resulting scheme enjoys the second order accuracy both in space and time
 \begin{eqnarray}
&& \bU_{ij}^{k+1}=\bU_{ij}^{k}-\frac{\Delta t}{\Delta x}\left[\bm{F}_{h,1}\left(\bU_{i+1/2,j}^{k+1/2}\right)-\bm{F}_{h,1}\left(\bU_{i-1/2 ,j}^{k+1/2}\right)\right]-\frac{\Delta t}{\Delta y}\left[\bm{F}_{h,2}\left(\bU_{i,j+1/2}^{k+1/2}\right)-\bm{F}_{h,2}\left(\bU_{i,j-1/2}^{k+1/2}\right)\right].\nonumber \\
&&  \label{GRP_FV}
 \end{eqnarray}
Gradients arising in (\ref{rec}) are standardly computed by the so-called minmod limiter to avoid spurious oscillations on discontinuities, see, e.g., \cite{ben1984}.

\subsection{Numerical experiments}  \label{numerics}
In this section we consider two classical benchmarks, the Kelvin-Helmholtz and the Richtmyer-Meshkov problems, and illustrate robustness of the concept of ${\mathcal K}$-convergence using, as an example, the FLM \eqref{n2}--\eqref{n4} and GRP \eqref{GRP_FV} finite volume schemes.
These benchmarks have been also used by Fjordholm et al.~\cite{FjMiTa1, FjMiTa3}, where the {\em weak$(^*)$-convergence} of the statistical solutions
has been investigated.  It is to be pointed out that our theoretical results yield the {\em strong convergence}
to a dissipative solution, which is the barycenter of the DMV solution. This fact will be confirmed by the numerical experiments in what follows.

For this purpose, we denote by $E_1$, $E_2$, $E_3$, and $E_4$ the $L^1$-error of the difference between the numerical solution and the reference solution
computed on a finest grid, the $L^1$-error of the Ces\`aro averages, the $L^1$-error of the first variance, and the $L^1$-error of the Wasserstein distance\footnote{An open source library SciPy \cite{SciPy} was used to compute $W_1$-distance.
We thank S.~Mishra and K.~Lye for pointing us this package.}
 for the Ces\`aro average of the Dirac measures concentrated on numerical solutions, respectively. More precisely, we have
\[
E_1 = \left\| U_{h_n}  -   U_{h_N}\right\|, \;
E_2 = \left\| \widetilde{U}_{h_n}  -  \widetilde{U}_{h_N} \right\|, \;
E_3 = \left\| {U}^{\dagger}_{h_n}  -  {U}^{\dagger}_{h_N} \right\|, \;
E_4= \left\| W_1( \overline{\mathcal{V}}^n_{t,x}, \overline{\mathcal{V}}^N_{t,x}) \right\|,
\]
where $h_n= \frac{1}{n}$, $n$ is the number of rectangular mesh cells in each direction, $N= 2048$, $\widetilde{U}_{h_n} =\frac1n\sum_{j=1}^n U_{h_j}$, ${U}^{\dagger}_{h_n}= \frac1n \sum_{j=1}^n \left| U_{h_{j}} - \widetilde{U}_{h_n} \right|$, and $U\in \{\vr, \vm, S, E \}$.
In all of the following tests we set  $\Omega=[0,1]^2$ and apply periodic boundary conditions.
\paragraph{Experiment 1.}
The first experiment is the Kelvin-Helmholtz  problem \cite{Helmhotz, Kelvin} with the initial data
\[
(\vr, u_1, u_2,p)(x)=\left\{
\begin{array}{ll}
(2, -0.5, 0, 2.5), & \text{if}\ I_1<x_2<I_2\\
(1, 0.5, 0, 2.5), & \text{otherwise.}
\end{array}
\right.
\]
Here the interface profiles
\[
I_j = I_j(x,\omega):=J_j+\epsilon Y_j(x,\omega), \quad j=1,2,
\]
are chosen to be small perturbations around the lower $J_1=0.25$ and the upper $J_2=0.75$ interface, respectively. Further,
\[
Y_j(x,\omega)=\sum_{n=1}^{m}a_j^n(\omega)\cos(b_j^n(\omega)+2n\pi x_1),\quad j=1,2,
\]
where $a_j^n=a_j^n(\omega)\in[0,1]$ and $b_j^n=b_j^n(\omega)\in[-\pi,\pi]$, $i=1,2$, $n=1,\ldots,m$ are (fixed) random numbers. The coefficients $a_j^n$ have been normalized such that $\sum_{n=1}^{m}a_j^n=1$ to guarantee that $|I_j(x,\omega)-J_j|\leq \epsilon$ for $j=1,2$. We have set $m=10$ and $\epsilon=0.01.$

In what follows we present the numerical simulations obtained by the FLM finite volume method with $\alpha = 1.8,$ $\beta = 0.8$, upwind finite volume method (see Remark~5.2) and the GRP finite volume method at the final
time $T=2.$ It is the time when small-scales vortex sheets have already been formed at the interfaces.

In Table~\ref{tab1} we show the results of the convergence study for the errors $E_1, \dots, E_4$ in density. We can clearly observe that none of the methods converges in the classical sense, i.e.~single numerical solutions do not converge, see the first column. This behaviour is also demonstrated in the Figures~\ref{fig1}, \ref{fig3_GRP}. Similar non-convergence results have been presented for the second order TeCNO scheme in \cite{FjKaMiTa}.

The second, third and fourth columns in Table~\ref{tab1} show the convergence results for the Ces\`aro averages of 
numerical solutions and their first variance, as well as 
$\mathcal{K}$-convergence of the Wasserstein distance of the corresponding Dirac measures. We should point out that the convergence is strong in the $L^1$-norm as proved above, cf.~\eqref{K3},  \eqref{K5}--\eqref{K51} and Theorem~\ref{Tmain}. The graphs in Figure~\ref{fig1} show the results of the convergence study for all variables
$\vr, \vm, S, E.$ As expected, these variables behave in a similar manner.

Approximate solutions for the density computed by the second order GRP scheme are presented in Figure~\ref{fig3_GRP}, which also clearly indicates that by refining the mesh we recover finer and finer vortex structures and the method does not
converge in the classical sense. However, as already pointed out we have $\mathcal{K}$-convergence of the numerical solution and its first variance, see~Figures~\ref{fig4_GRP}, \ref{fig5_GRP}. The results obtained by the FLM scheme are similar, but more diffusive due to the first order accuracy and we do not present them here.

\begin{figure}[h!]
\begin{subfigure}{\linewidth} \centering
\includegraphics[width=0.24\textwidth]{./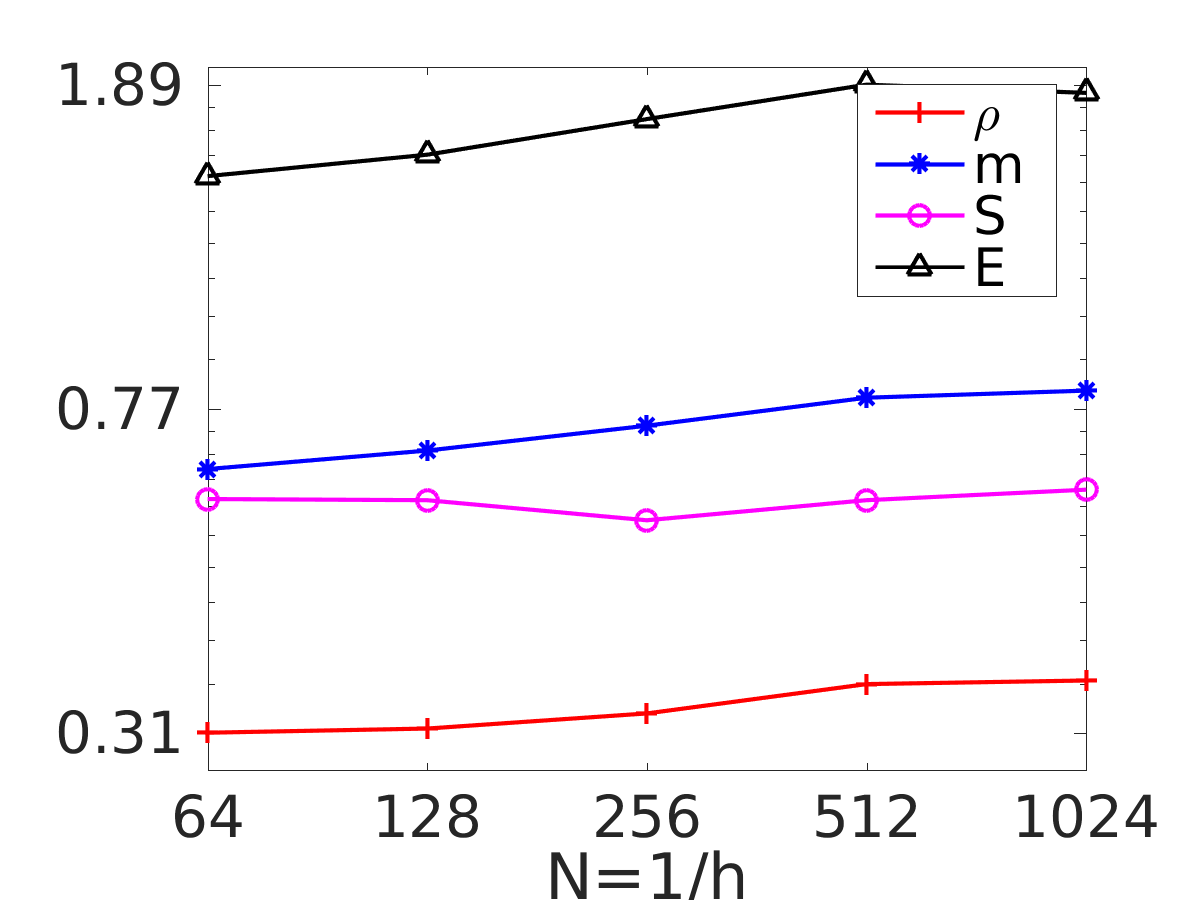}
\includegraphics[width=0.24\textwidth]{./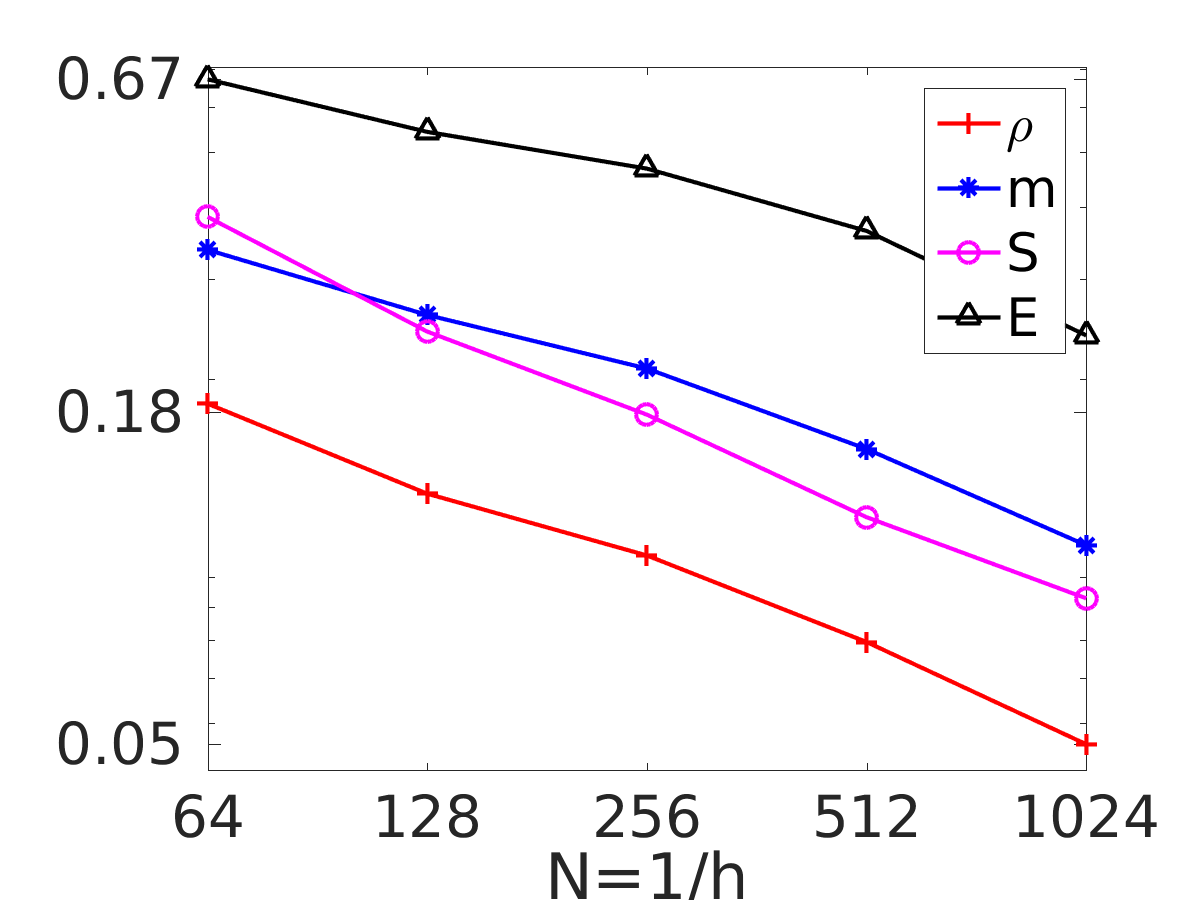}
\includegraphics[width=0.24\textwidth]{./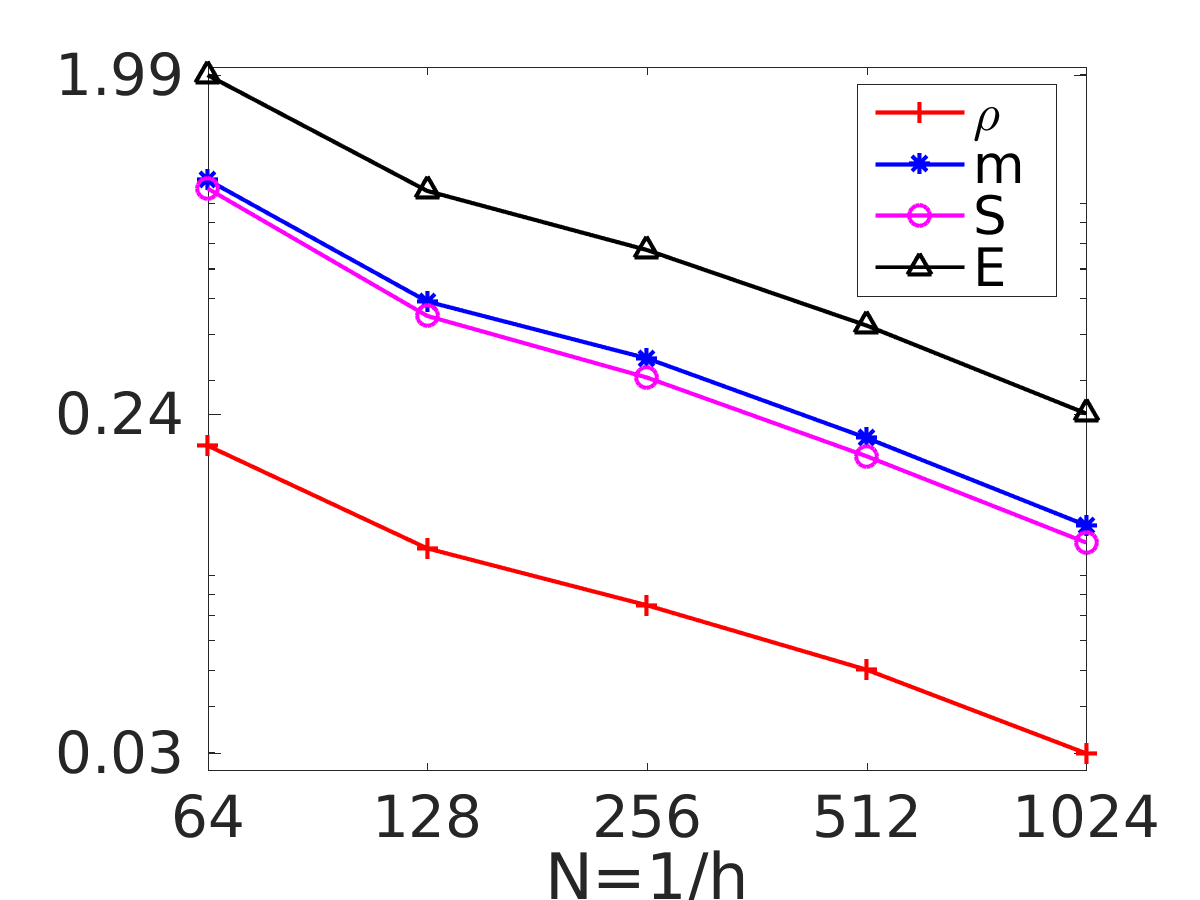}
\includegraphics[width=0.24\textwidth]{./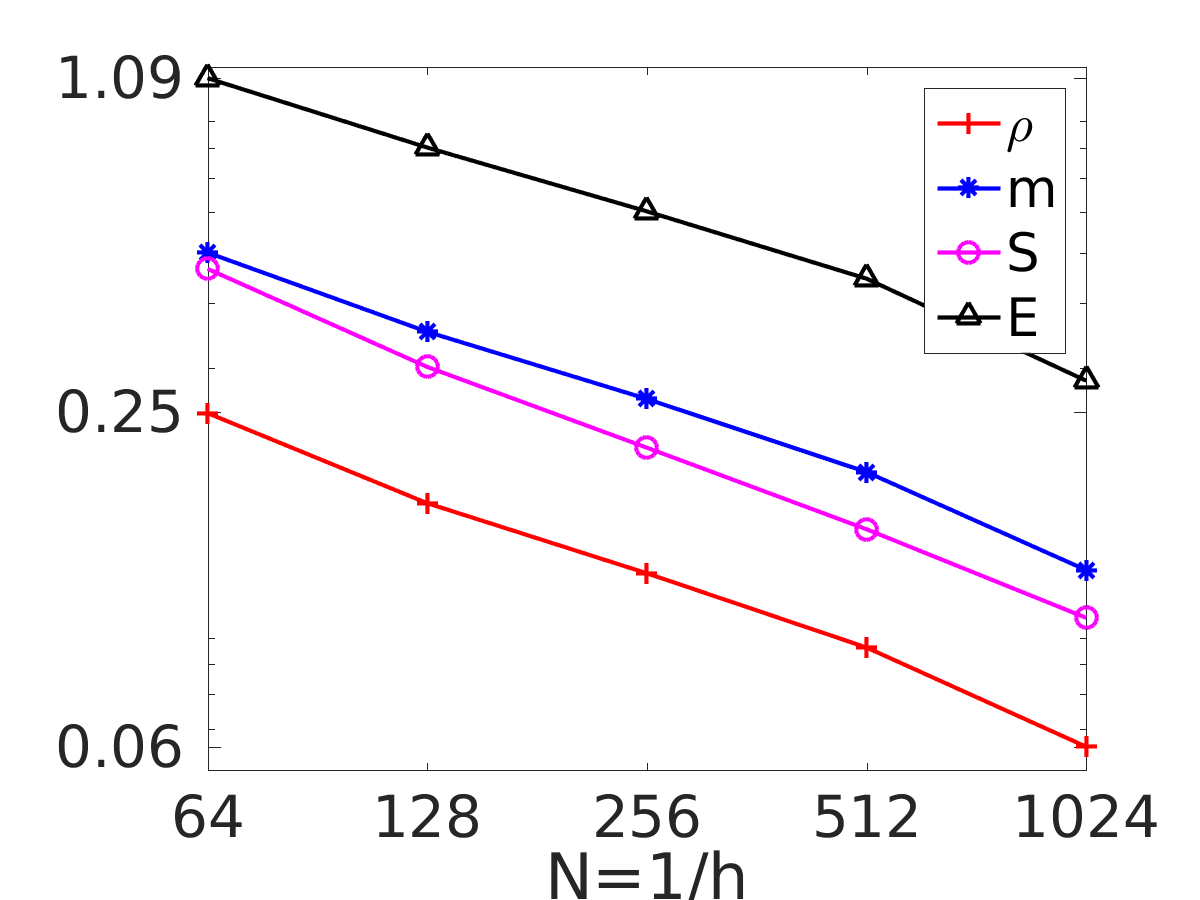}
\caption{FLM scheme}\label{fig1b}
\end{subfigure}
\begin{subfigure}{\linewidth} \centering
\includegraphics[width=0.24\textwidth]{./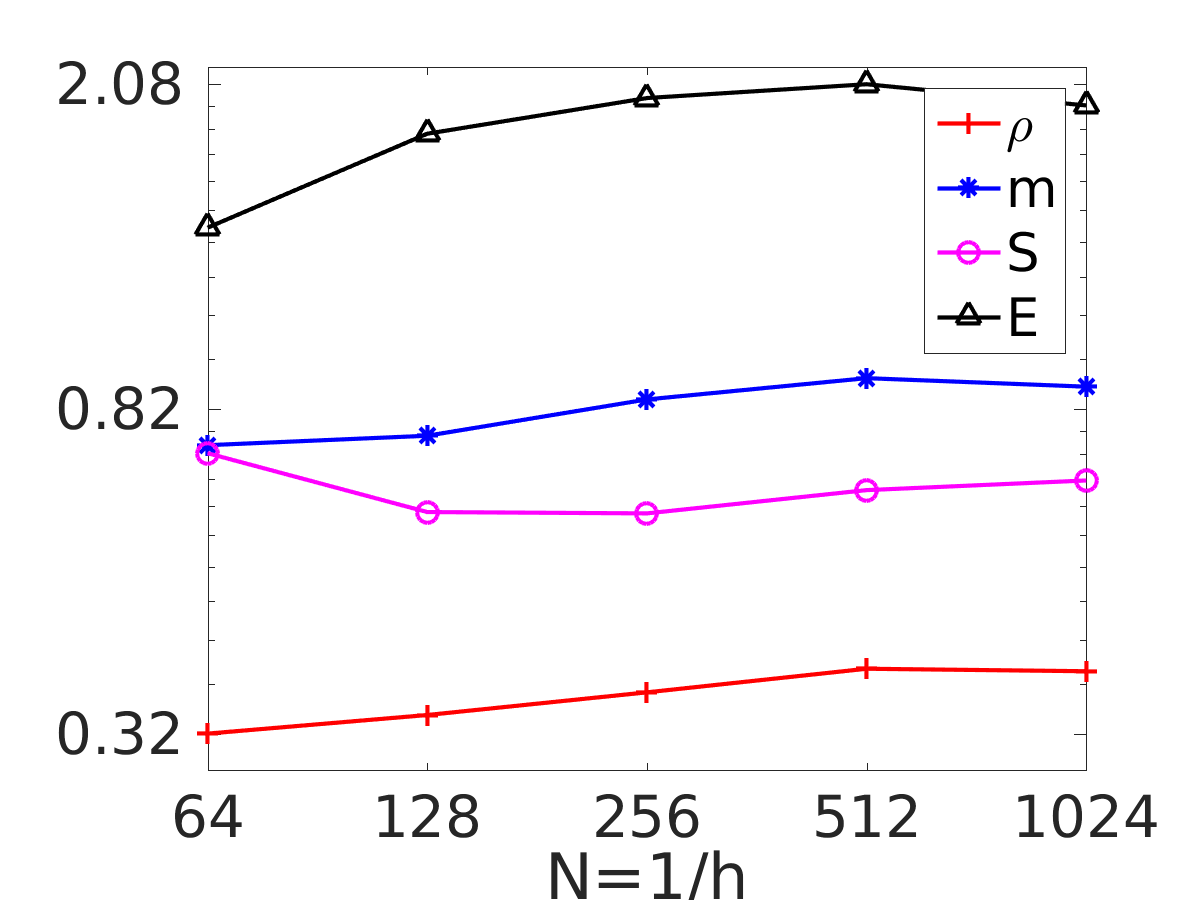}
\includegraphics[width=0.24\textwidth]{./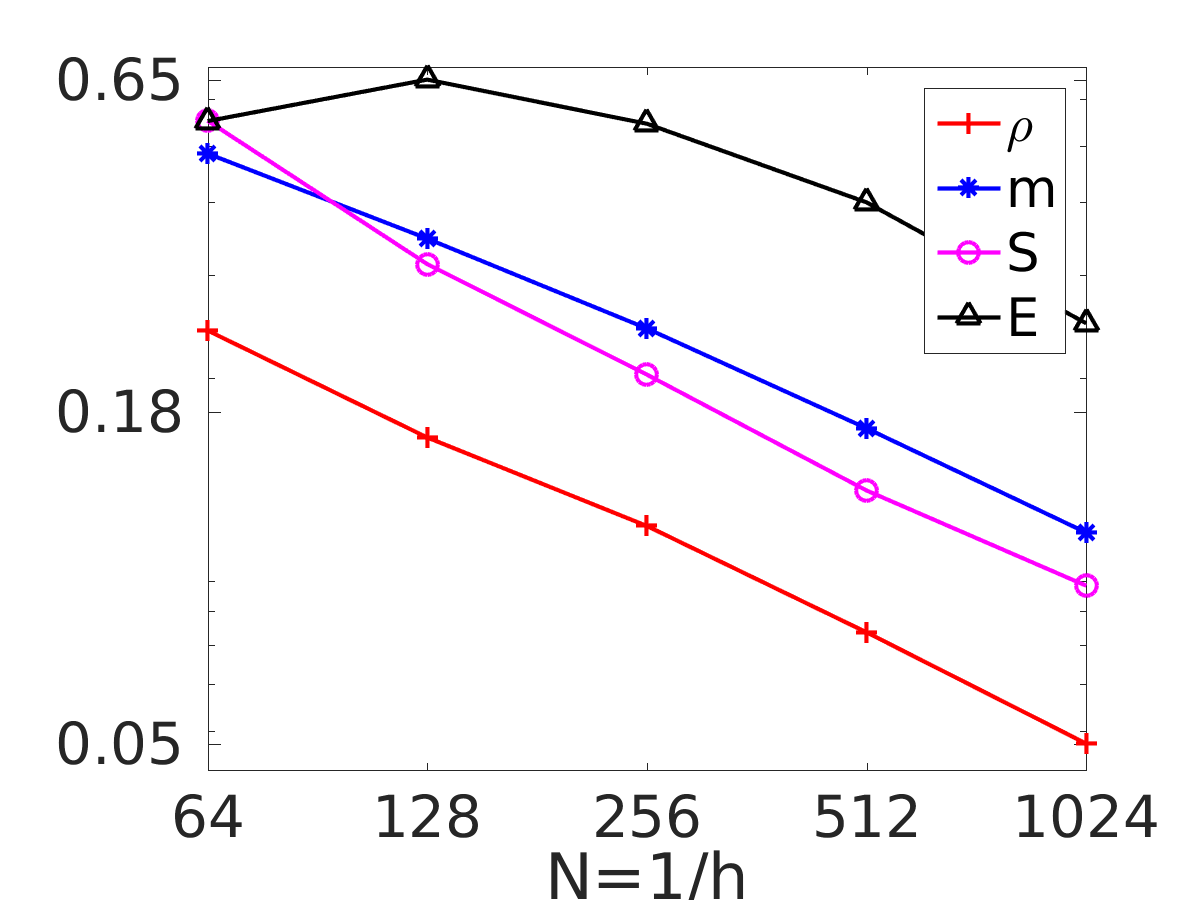}
\includegraphics[width=0.24\textwidth]{./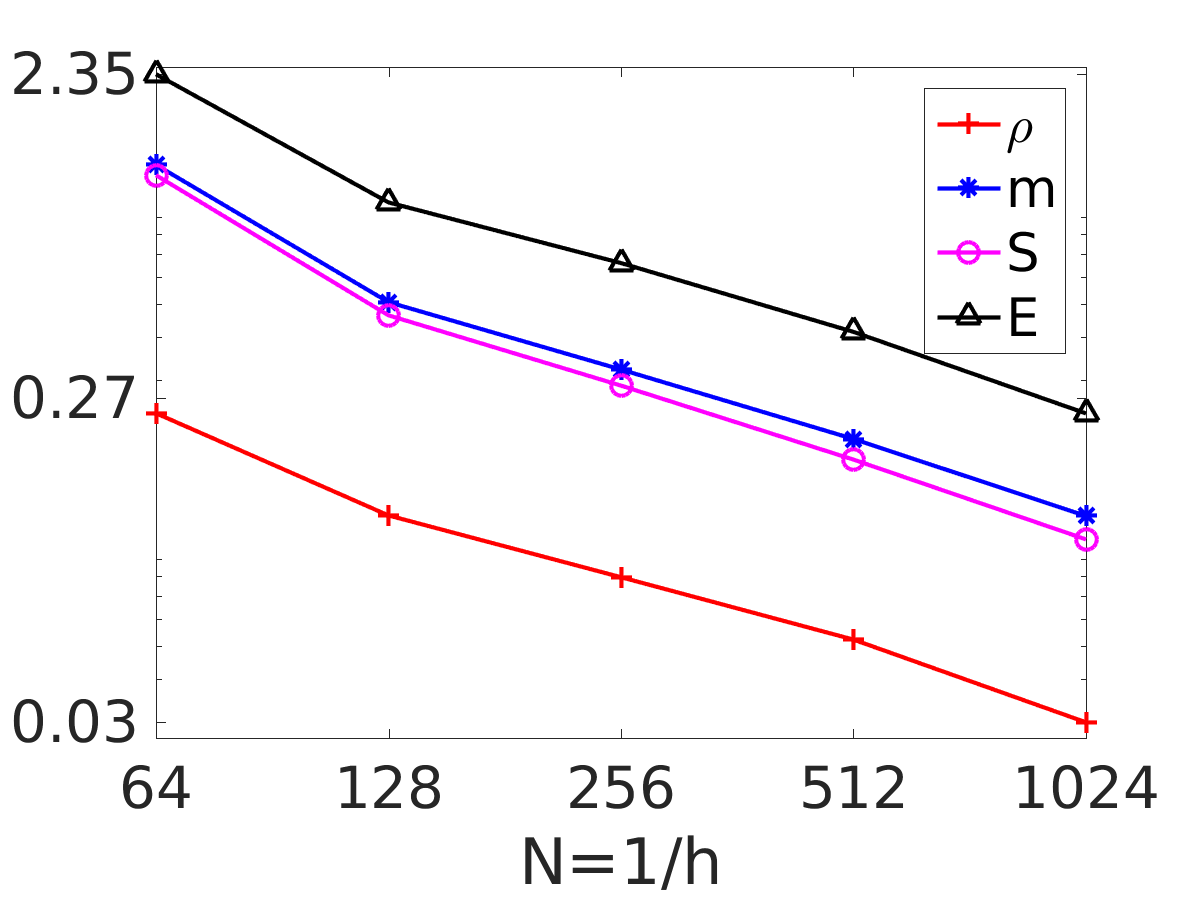}
\includegraphics[width=0.24\textwidth]{./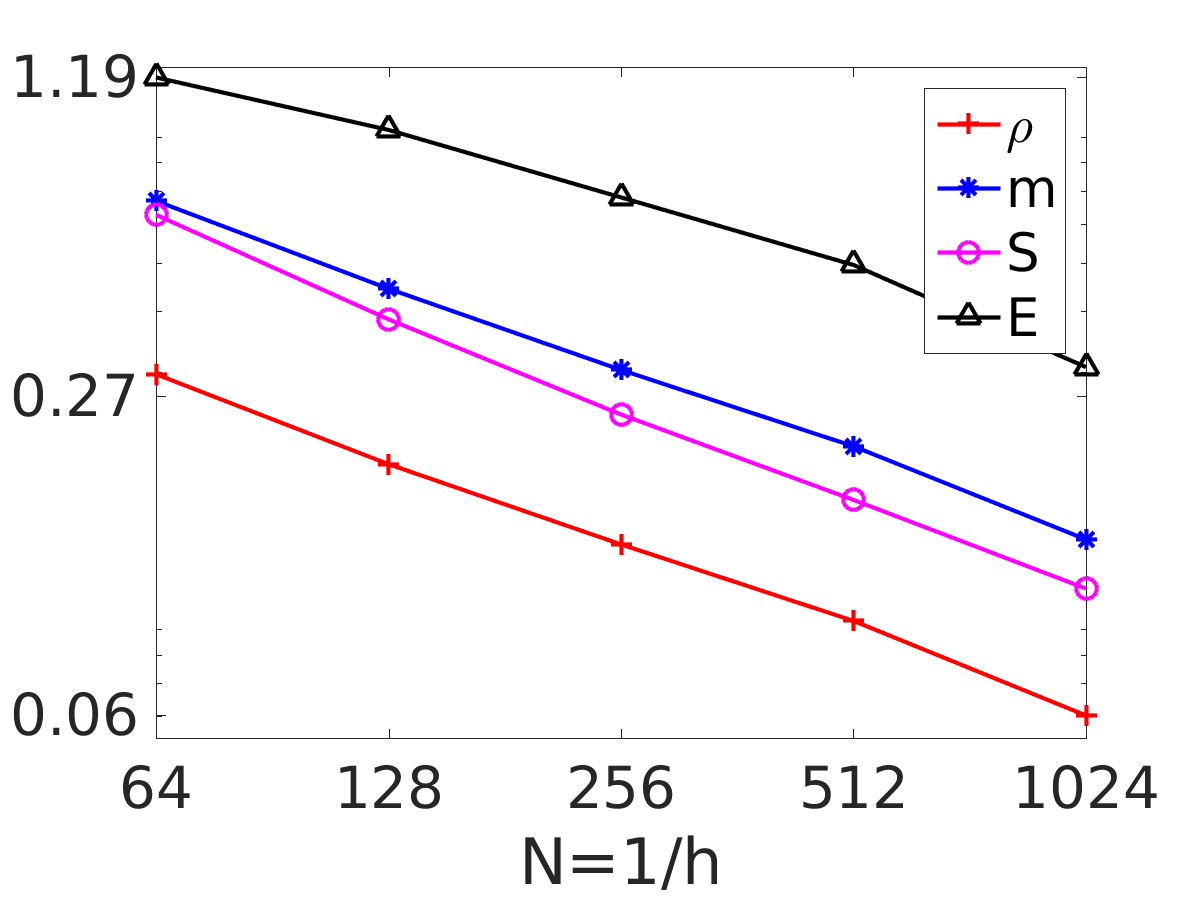}
\caption{upwind FV scheme}\label{fig1c}
\end{subfigure}
\begin{subfigure}{\linewidth} \centering
\includegraphics[width=0.24\textwidth]{./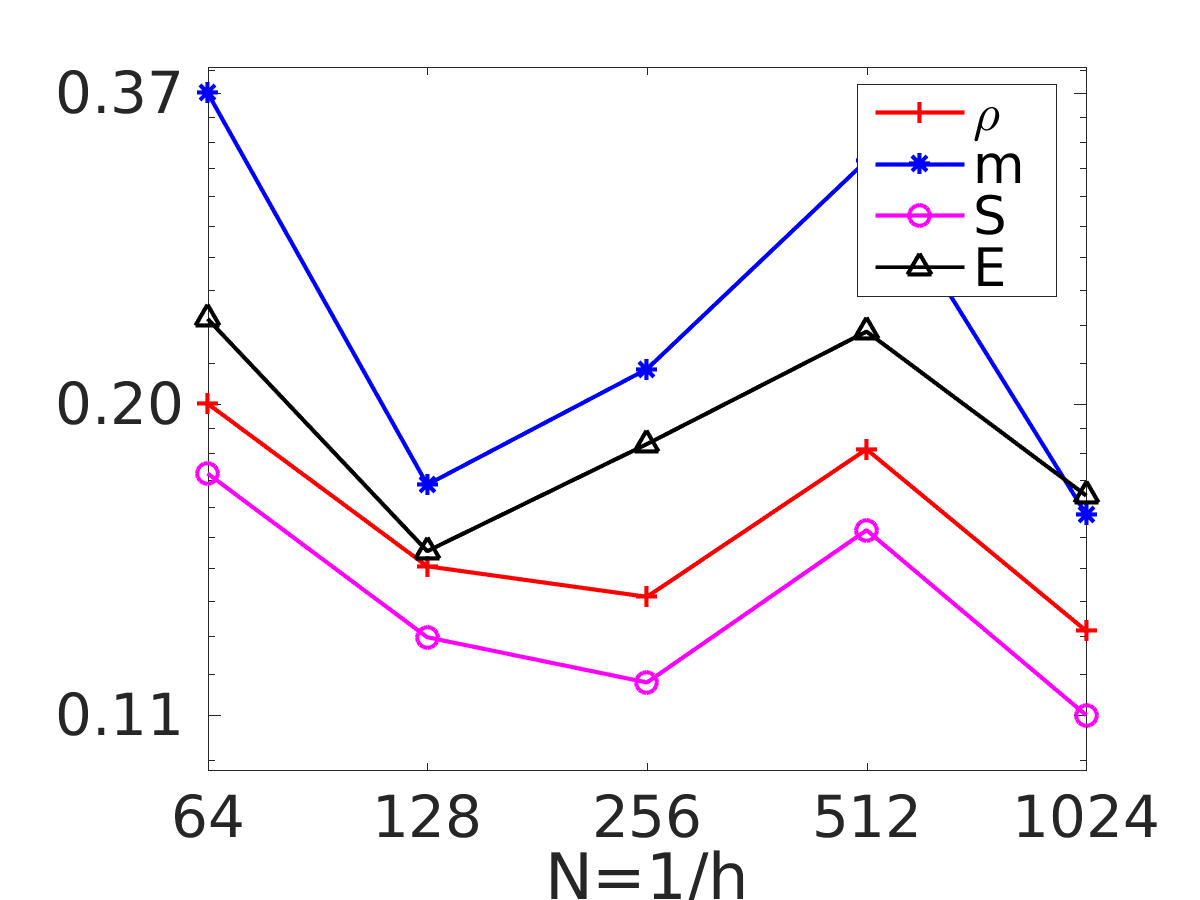}
\includegraphics[width=0.24\textwidth]{./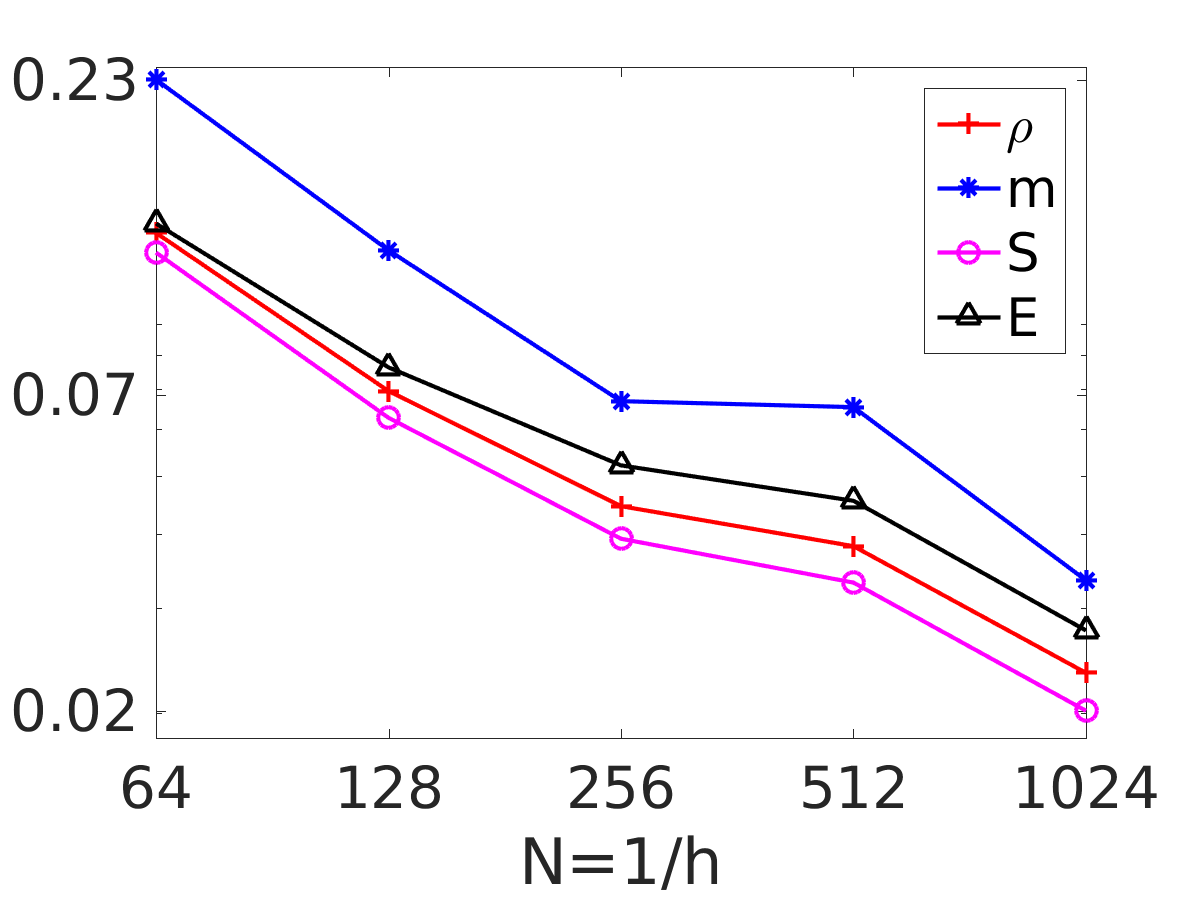}
\includegraphics[width=0.24\textwidth]{./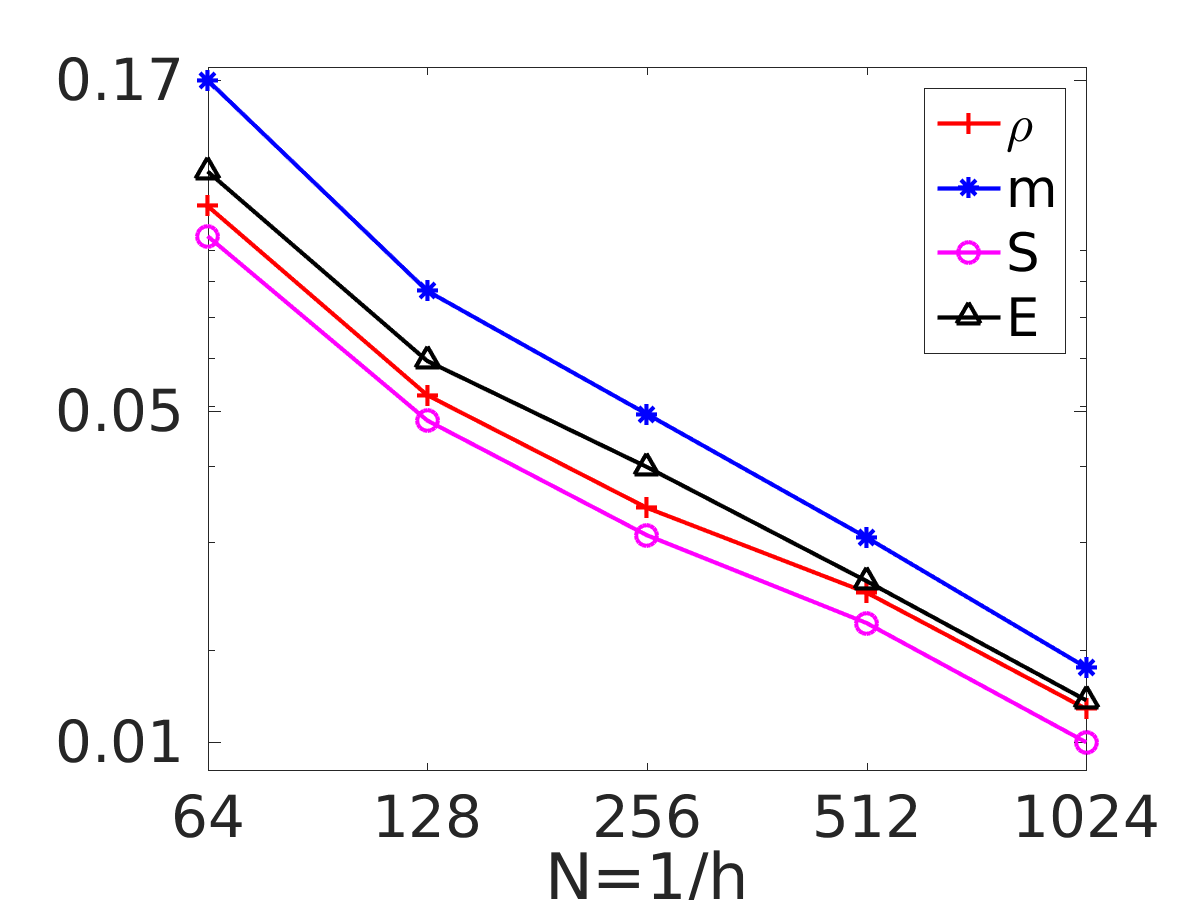}
\includegraphics[width=0.24\textwidth]{./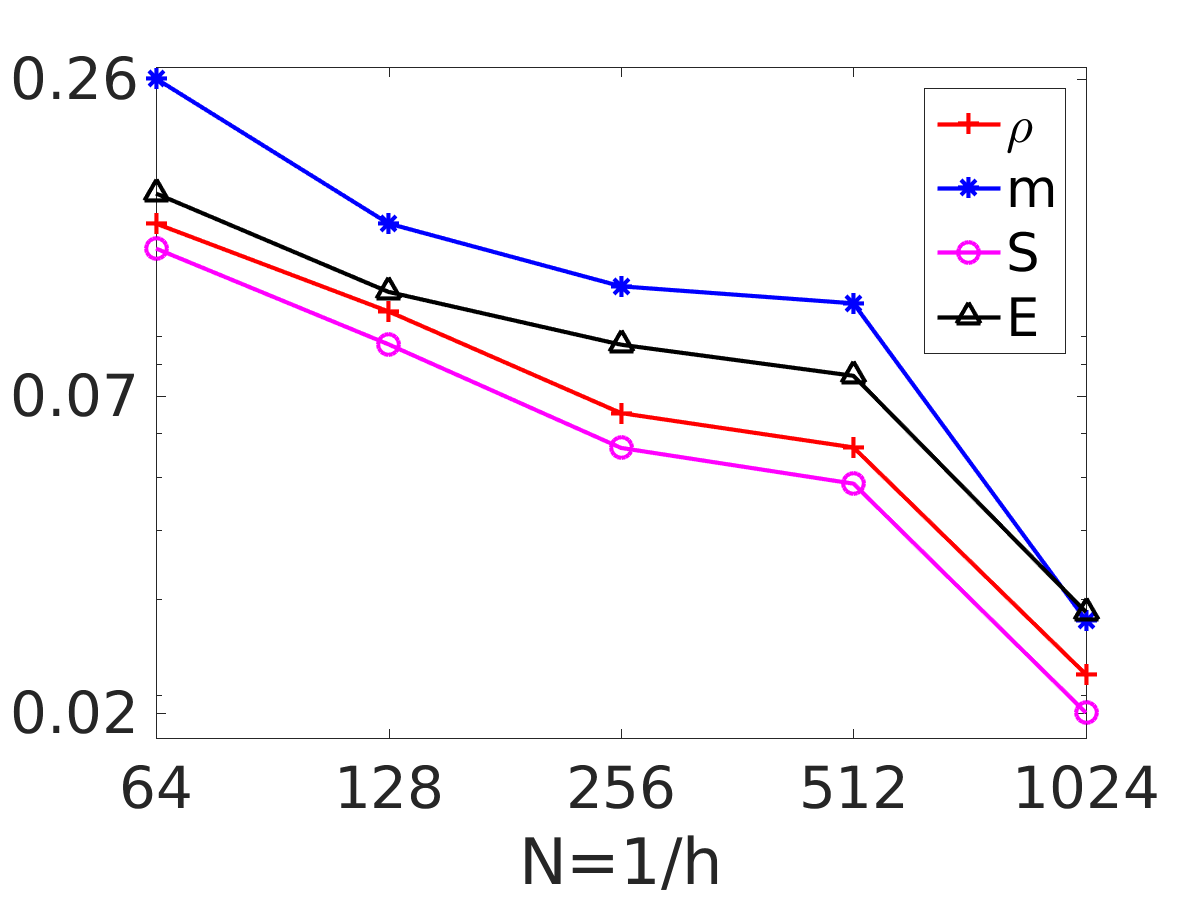}
\caption{GRP scheme}\label{fig1_GRP}
\end{subfigure}
\caption{Experiment~1, convergence study for the Kelvin-Helmholtz problem: $E_1$, $E_2$, $E_3$, and $E_4$ errors (left to right). }
\label{fig1}
\end{figure}

\begin{table}[htbp]
\caption{Experiment~1, convergence study for the Kelvin-Helmholtz problem: $E_1$, $E_2$, $E_3$, and $E_4$ errors for density (left to right).}
\label{tab1}
\begin{tabular}{|c|c|c|c|c|c|c|c|c|}
  \hline
  \multirow{2}{*}{mesh density} & \multicolumn{2}{c|}{$E_1$}  & \multicolumn{2}{c|}{$E_2$} & \multicolumn{2}{c|}{$E_3$} & \multicolumn{2}{c|}{$E_4$} \\
  \cline{2-9}
      & error   & order & error   & order & error   & order  & error   & order \\
   \hline
\multicolumn{9}{c}{(a) \quad FLM scheme} \\\hline
64	 & 3.09e-01 	 & - 	 & 1.81e-01 	 & - 	 & 2.00e-01 	 & - 	 & 2.45e-01 	 & - 	 \\
128	 & 3.13e-01 	 & -0.02 	 & 1.26e-01 	 & 0.52 	 & 1.06e-01 	 & 0.92 	 & 1.64e-01 	 & 0.58 	 \\
256	 & 3.27e-01 	 & -0.06 	 & 9.83e-02 	 & 0.36 	 & 7.44e-02 	 & 0.51 	 & 1.20e-01 	 & 0.45 	 \\
512	 & 3.54e-01 	 & -0.12 	 & 6.94e-02 	 & 0.50 	 & 4.99e-02 	 & 0.58 	 & 8.62e-02 	 & 0.48 	 \\
1024	 & 3.58e-01 	 & -0.01 	 & 4.59e-02 	 & 0.59 	 & 2.97e-02 	 & 0.75 	 & 5.54e-02 	 & 0.64 	 \\
\hline
\multicolumn{9}{c}{(b) \quad upwind FV scheme }\\\hline
64	 & 3.24e-01 	 & - 	 & 2.41e-01 	 & - 	 & 2.40e-01 	 & - 	 & 2.97e-01 	 & - 	 \\
128	 & 3.41e-01 	 & -0.08 	 & 1.59e-01 	 & 0.61 	 & 1.21e-01 	 & 0.99 	 & 1.95e-01 	 & 0.61 	 \\
256	 & 3.64e-01 	 & -0.09 	 & 1.12e-01 	 & 0.50 	 & 7.96e-02 	 & 0.60 	 & 1.34e-01 	 & 0.54 	 \\
512	 & 3.90e-01 	 & -0.10 	 & 7.36e-02 	 & 0.60 	 & 5.23e-02 	 & 0.61 	 & 9.37e-02 	 & 0.52 	 \\
1024	 & 3.87e-01 	 & 0.01 	 & 4.75e-02 	 & 0.63 	 & 3.00e-02 	 & 0.80 	 & 6.02e-02 	 & 0.64 	 \\
\hline
\multicolumn{9}{c}{(c)\quad GRP scheme} \\\hline
64	 & 2.03e-01 	 & - 	 & 1.28e-01 	 & - 	 & 1.06e-01 	 & - 	 & 1.44e-01 	 & - 	 \\
128	 & 1.49e-01 	 & 0.45 	 & 6.95e-02 	 & 0.88 	 & 5.21e-02 	 & 1.03 	 & 9.98e-02 	 & 0.53 	 \\
256	 & 1.40e-01 	 & 0.08 	 & 4.45e-02 	 & 0.64 	 & 3.41e-02 	 & 0.61 	 & 6.52e-02 	 & 0.61 	 \\
512	 & 1.86e-01 	 & -0.41 	 & 3.81e-02 	 & 0.22 	 & 2.48e-02 	 & 0.46 	 & 5.65e-02 	 & 0.21 	 \\
1024	 & 1.31e-01 	 & 0.50 	 & 2.33e-02 	 & 0.71 	 & 1.60e-02 	 & 0.63 	 & 2.18e-02 	 & 1.37 	 \\
\hline
\end{tabular}
\end{table}

\begin{figure}[h]
\begin{subfigure}{0.245\linewidth} \centering
\includegraphics[width=1\textwidth]{./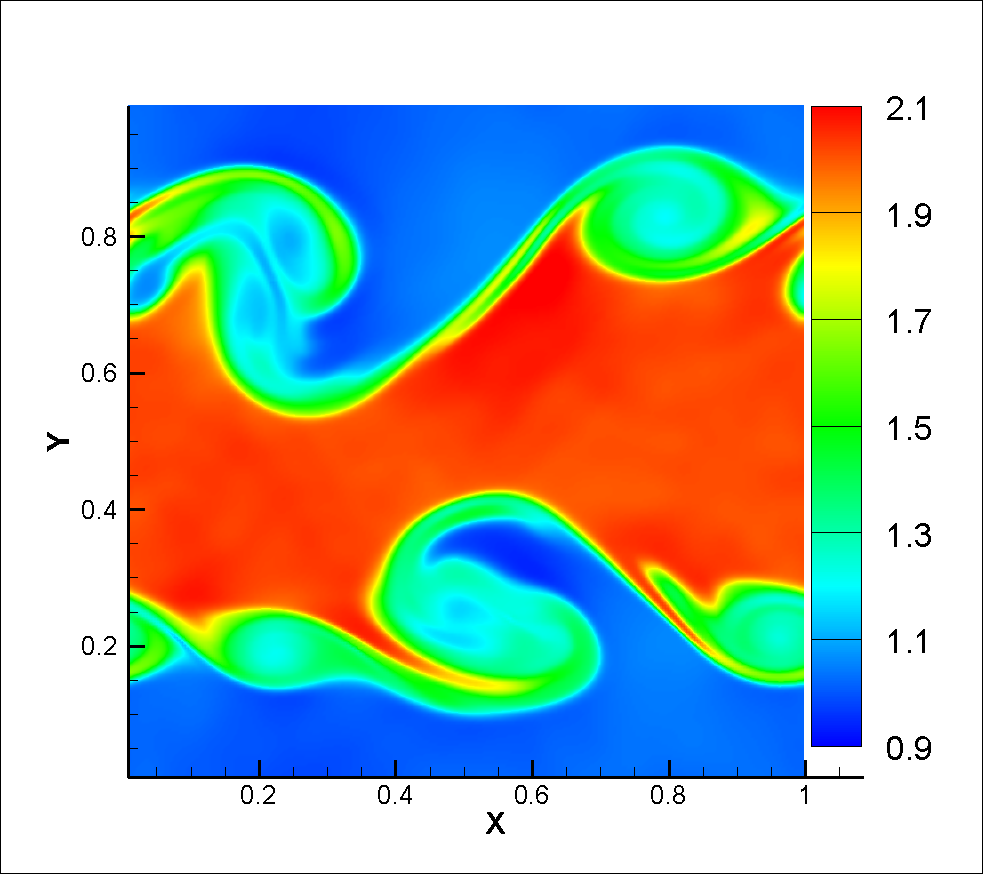}
\caption{$n=256$}
\end{subfigure}
\begin{subfigure}{0.245\linewidth} \centering
\includegraphics[width=1\textwidth]{./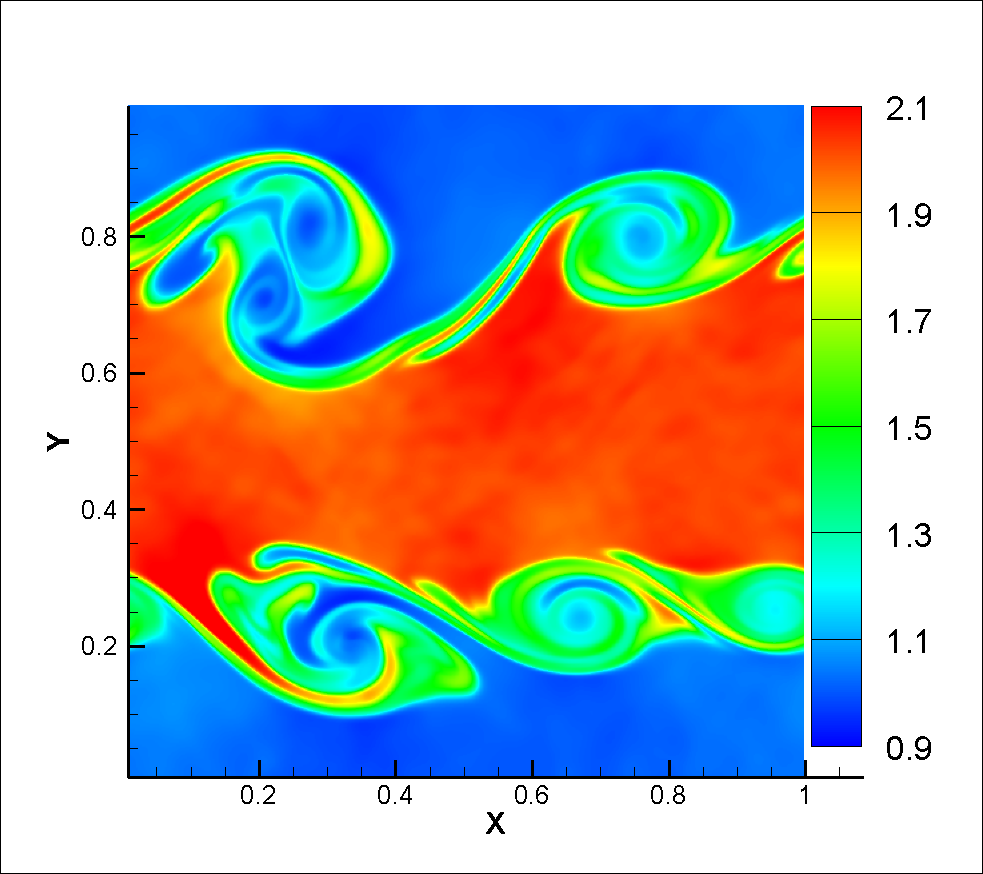}
\caption{$n=512$}
\end{subfigure}
\begin{subfigure}{0.245\linewidth} \centering
\includegraphics[width=1\textwidth]{./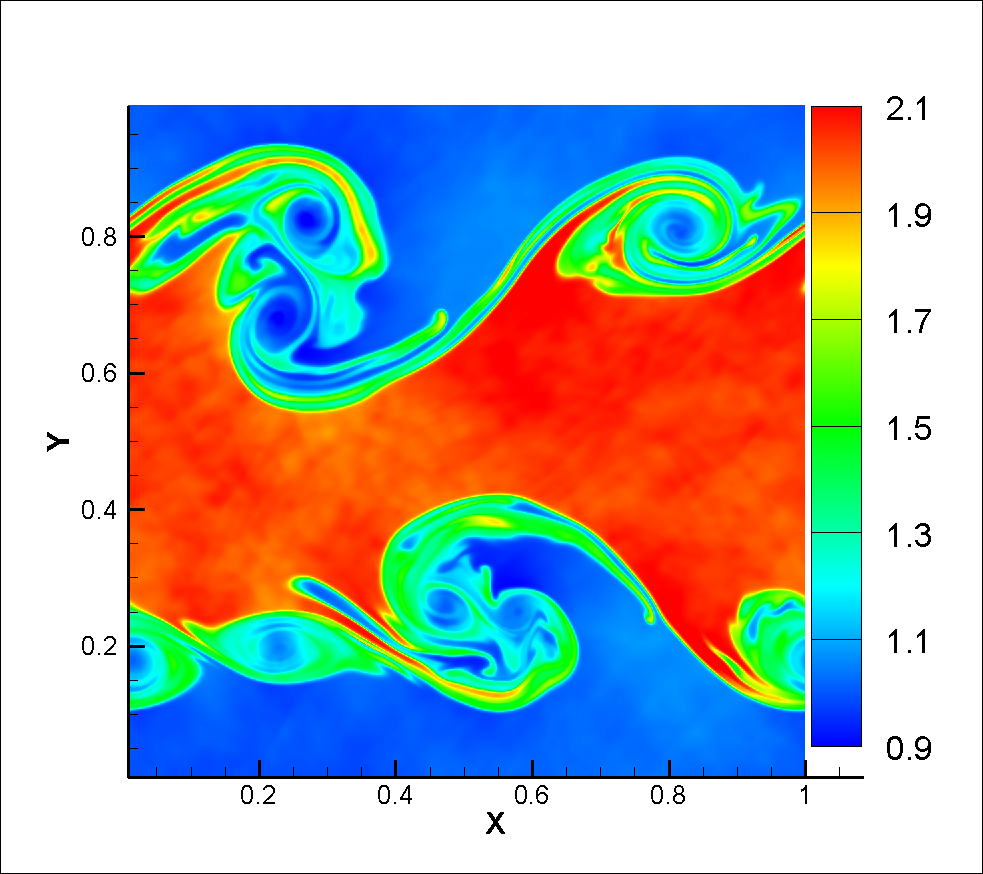}
\caption{$n=1024$}
\end{subfigure}
\begin{subfigure}{0.245\linewidth} \centering
\includegraphics[width=1\textwidth]{./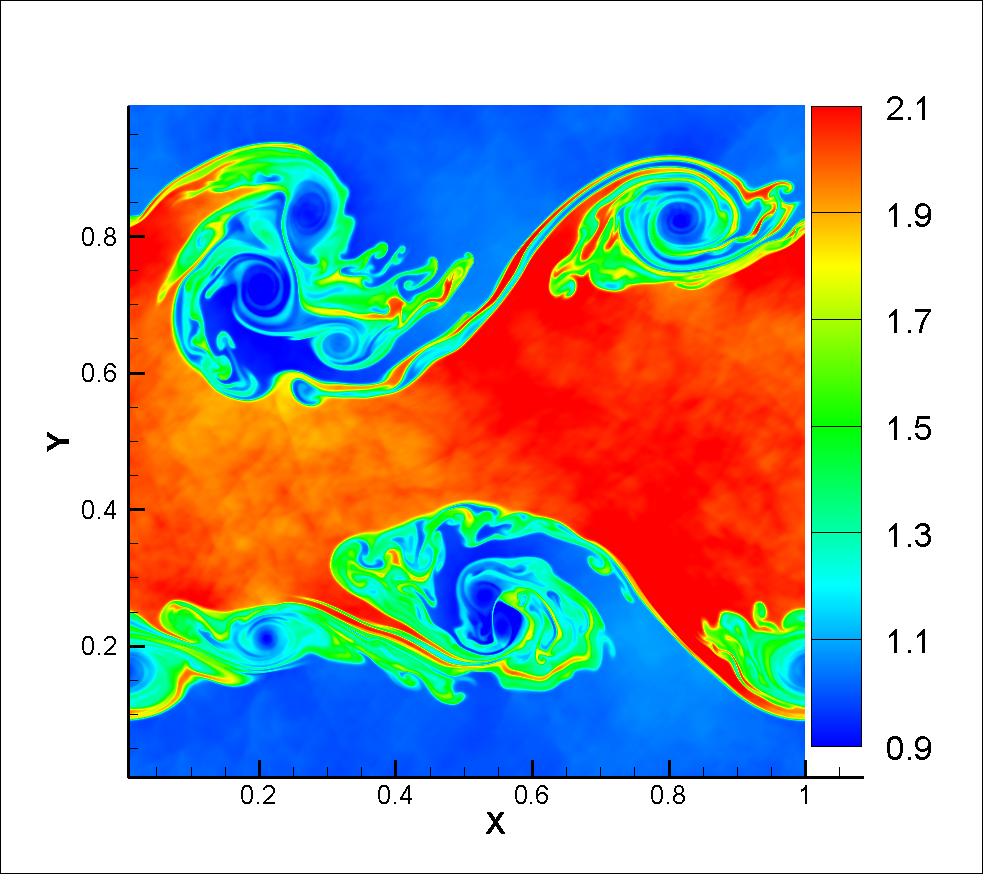}\caption{$n=2048$}
\end{subfigure}
\caption{Experiment~1,  density computed by the GRP scheme at $T=2$ for the Kelvin-Helmholtz problem on a mesh with $n \times n$ cells.}
\label{fig3_GRP}
\end{figure}

\begin{figure}[htbp]
\begin{subfigure}{0.245\linewidth} \centering
\includegraphics[width=1\textwidth]{./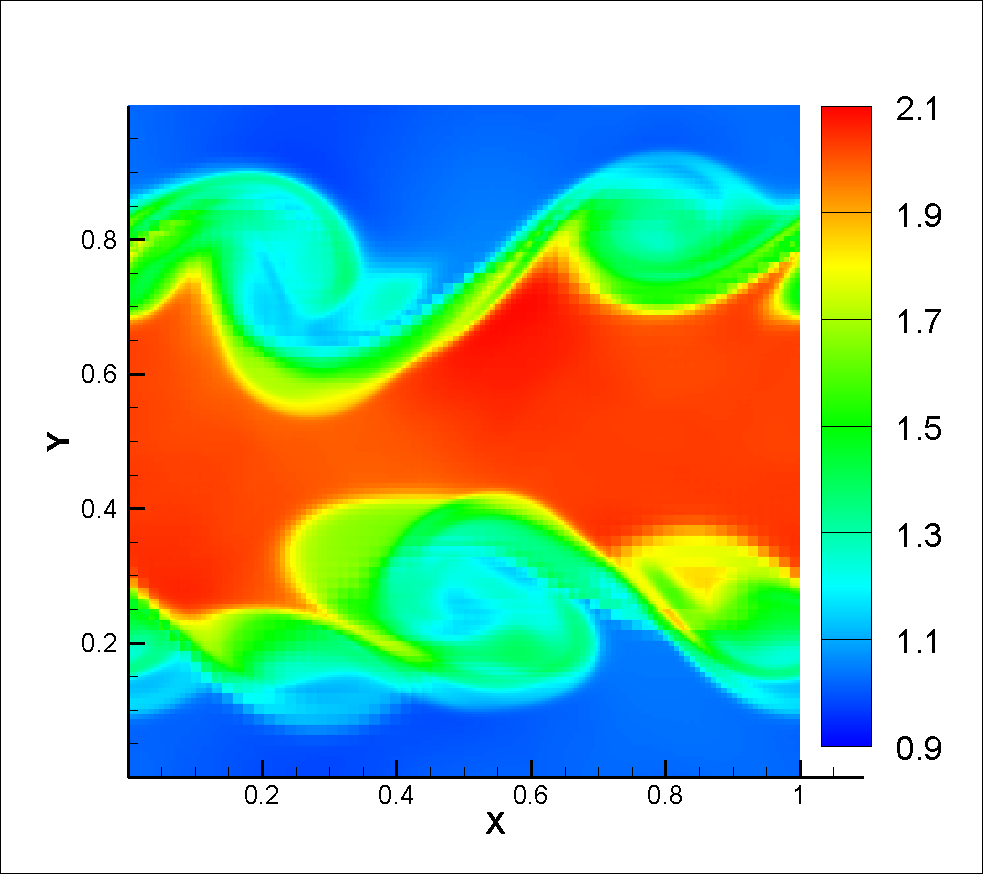}
\caption{$up\ to\ n=256$}
\end{subfigure}
\begin{subfigure}{0.245\linewidth} \centering
\includegraphics[width=1\textwidth]{./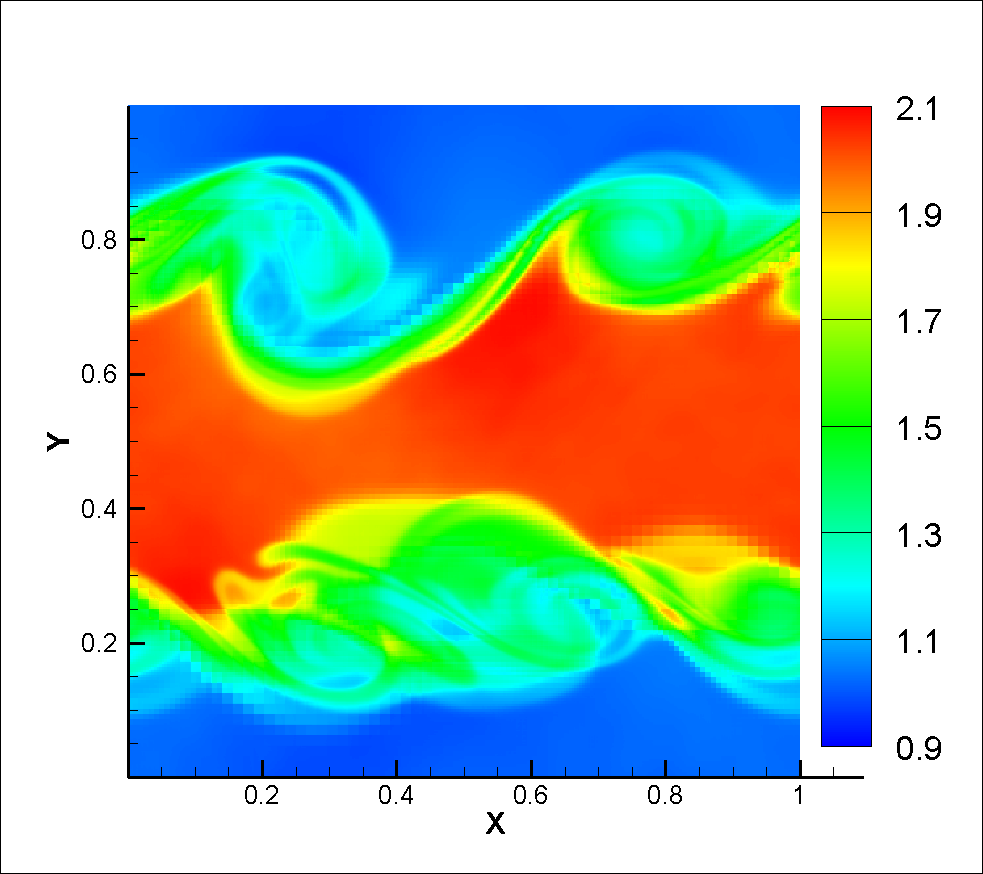}
\caption{$up\ to\ n=512$}
\end{subfigure}
\begin{subfigure}{0.245\linewidth} \centering
\includegraphics[width=1\textwidth]{./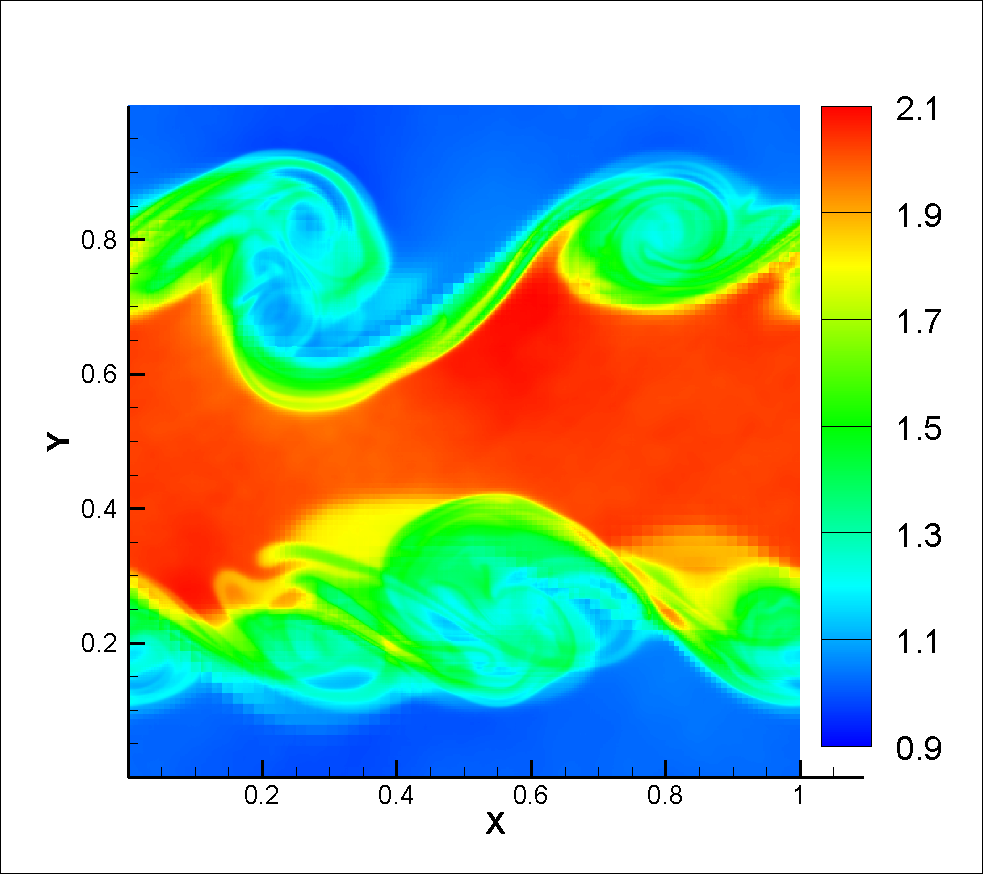}
\caption{$up\ to\ n=1024$}
\end{subfigure}
\begin{subfigure}{0.245\linewidth} \centering
\includegraphics[width=1\textwidth]{./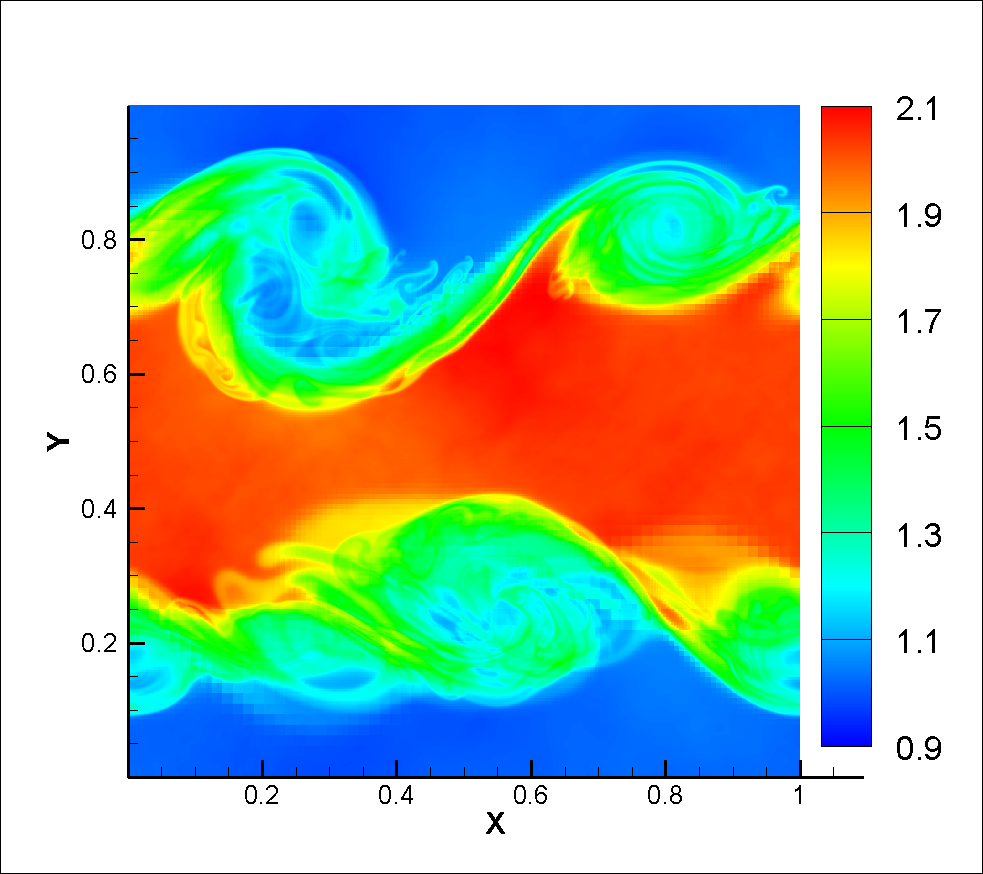}\caption{$up\ to\ n=2048$}
\end{subfigure}
\caption{Experiment~1, Ces\`aro averages of the density computed by the GRP method on  meshes with $j\times j$ cells, $j = 64, 128, \dots, n,$  for the Kelvin-Helmholtz problem.}
\label{fig4_GRP}
\end{figure}

\begin{figure}[h]
\begin{subfigure}{0.245\linewidth} \centering
\includegraphics[width=1\textwidth]{./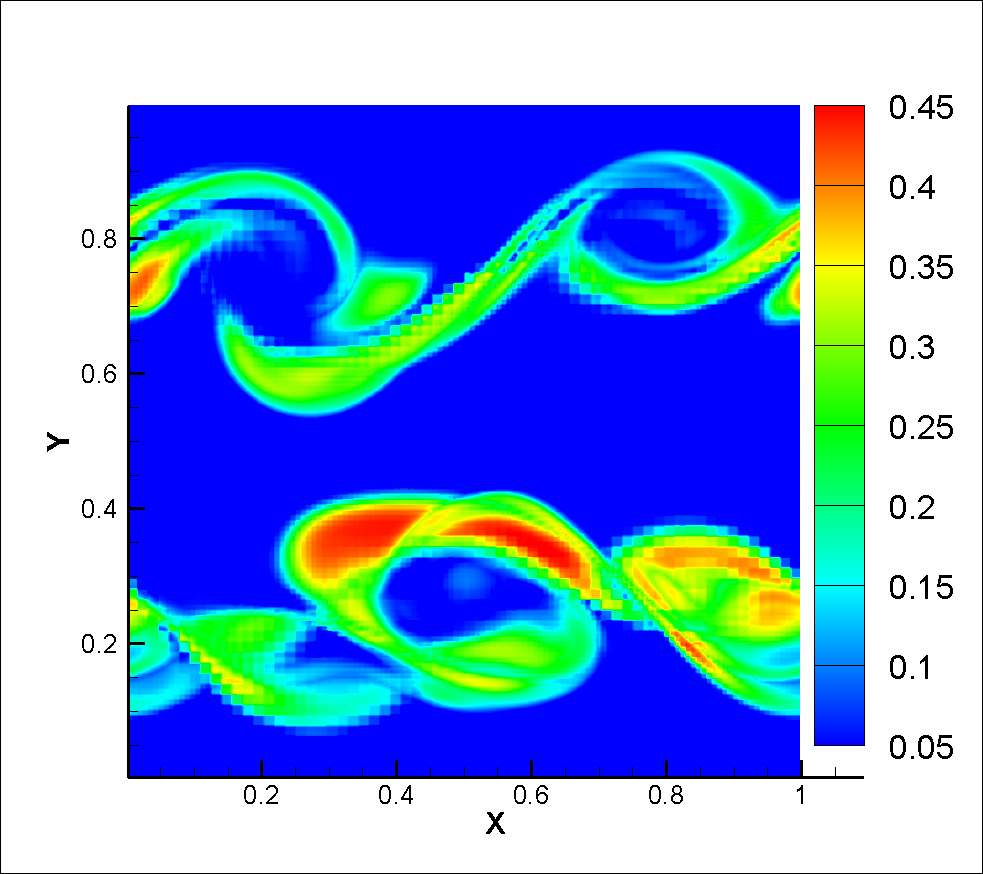}
\caption{$up\ to\ n=256$}
\end{subfigure}
\begin{subfigure}{0.245\linewidth} \centering
\includegraphics[width=1\textwidth]{./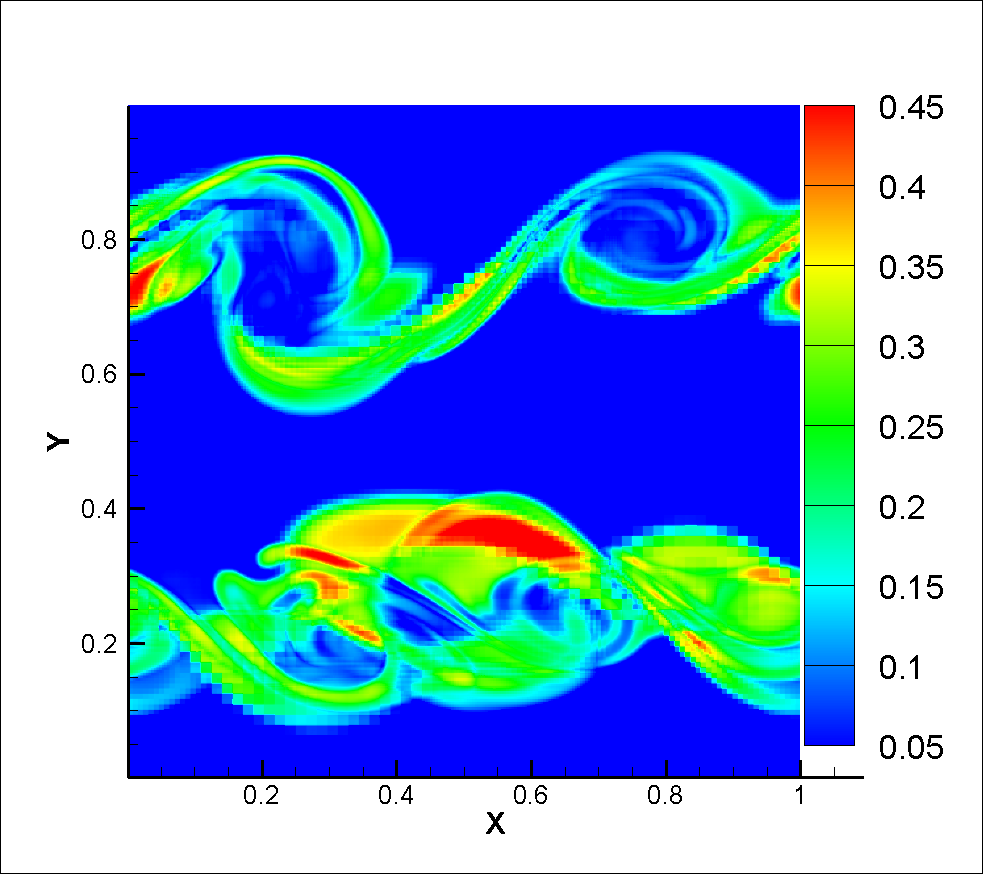}
\caption{$up\ to\ n=512$}
\end{subfigure}
\begin{subfigure}{0.245\linewidth} \centering
\includegraphics[width=1\textwidth]{./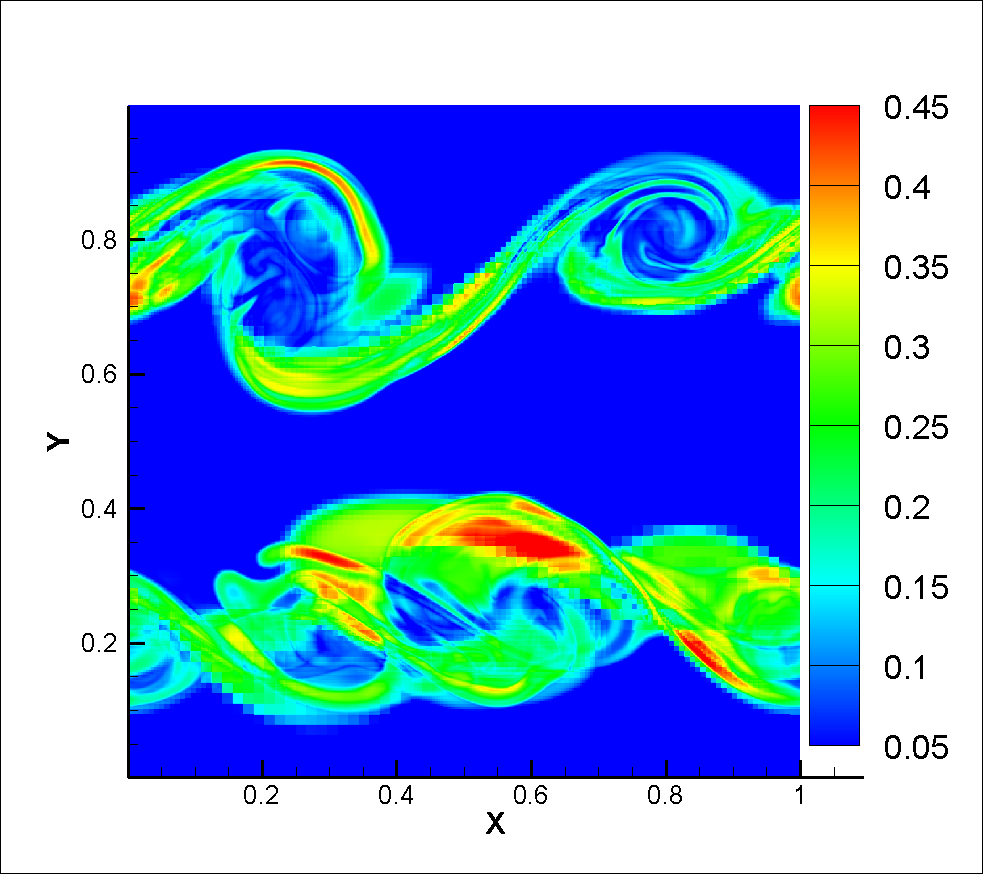}
\caption{$up\ to\ n=1024$}
\end{subfigure}
\begin{subfigure}{0.245\linewidth} \centering
\includegraphics[width=1\textwidth]{./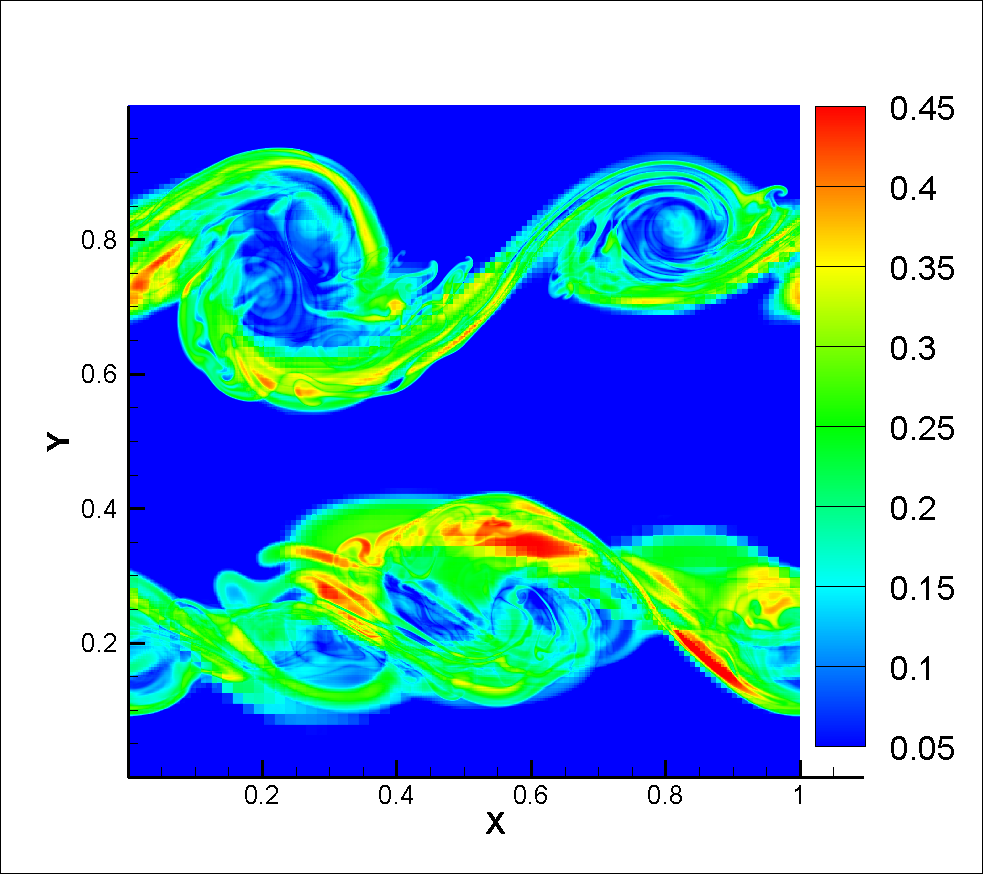}\caption{$up\  to\ n=2048$}
\end{subfigure}
\caption{Experiment~1, first variance of the density computed by the GRP method on  meshes with $j\times j$ cells, $j = 64, 128, \dots, n,$  for the Kelvin-Helmholtz problem.}
\label{fig5_GRP}
\end{figure}

\paragraph{Experiment 2.} The second experiment is the Richtmyer-Meshkov problem \cite{Meshkov, Richtmyer} of
complex interaction of strong shocks with unstable interfaces. The initial data are given as
\[
u_1=u_2=0, \quad
p(x)=\left\{
\begin{array}{ll}
20, & \text{if}\ r<0.1,\\
1, & \text{otherwise,}
\end{array}
\right. \quad
\rho(\bf x)=\left\{
\begin{array}{ll}
2, & \text{if}\ r <I(x,\omega),\\
1, & \text{otherwise,}
\end{array}
\right.
\]
where $r=\sqrt{(x_1-0.5)^2 + (x_2-0.5)^2}$ and
the radial density interface $I(x,\omega)=0.25 + 0.01 Y(x,\omega)$ is perturbed by
\[
Y(x,\omega)=\sum_{n=1}^{m}a^n(\omega)\cos(\phi+b^n(\omega)).
\]
Here $\phi =\arccos(x_2/r)$, and the parameters $a^n, b^n$ are random data chosen in the same way as in Experiment 1.
We have computed numerical solutions by the FLM, upwind FV and GRP schemes until the final time $T=4.$
At this time  the leading shock waves re-entered from the corners due to the periodic boundary conditions and interact with each others
leading to complex small-scales vortices.

Table~\ref{tab2} and Figure~\ref{fig3} demonstrate that there is no convergence of single numerical solutions in the classical sense, see the first
column, but we have $L^1$-convergence of the Ces\`aro averages of  numerical solutions and their first variance  as well as $\K$-convergence of the Wasserstein distance of the corresponding Dirac distributions. Numerical approximation of the density, its Ces\`aro averages and of the first variance at time $T=4$ is illustrated by Figures~\ref{fig_w4}--\ref{fig_w6}. We can again confirm that single numerical solutions do not converge in the classical sense,
but we have $\mathcal{K}$-convergence that is strong in $L^1(\Omega).$

Let us point out  that in both experiments convergence of the Ces\`aro averages of numerical solutions and their first variance 
as well as ${\mathcal K}$-convergence of the
 corresponding Dirac distributions is approximately of order 0.5.

\begin{figure}[hb!]
\begin{subfigure}{\linewidth} \centering
\includegraphics[width=0.24\textwidth]{./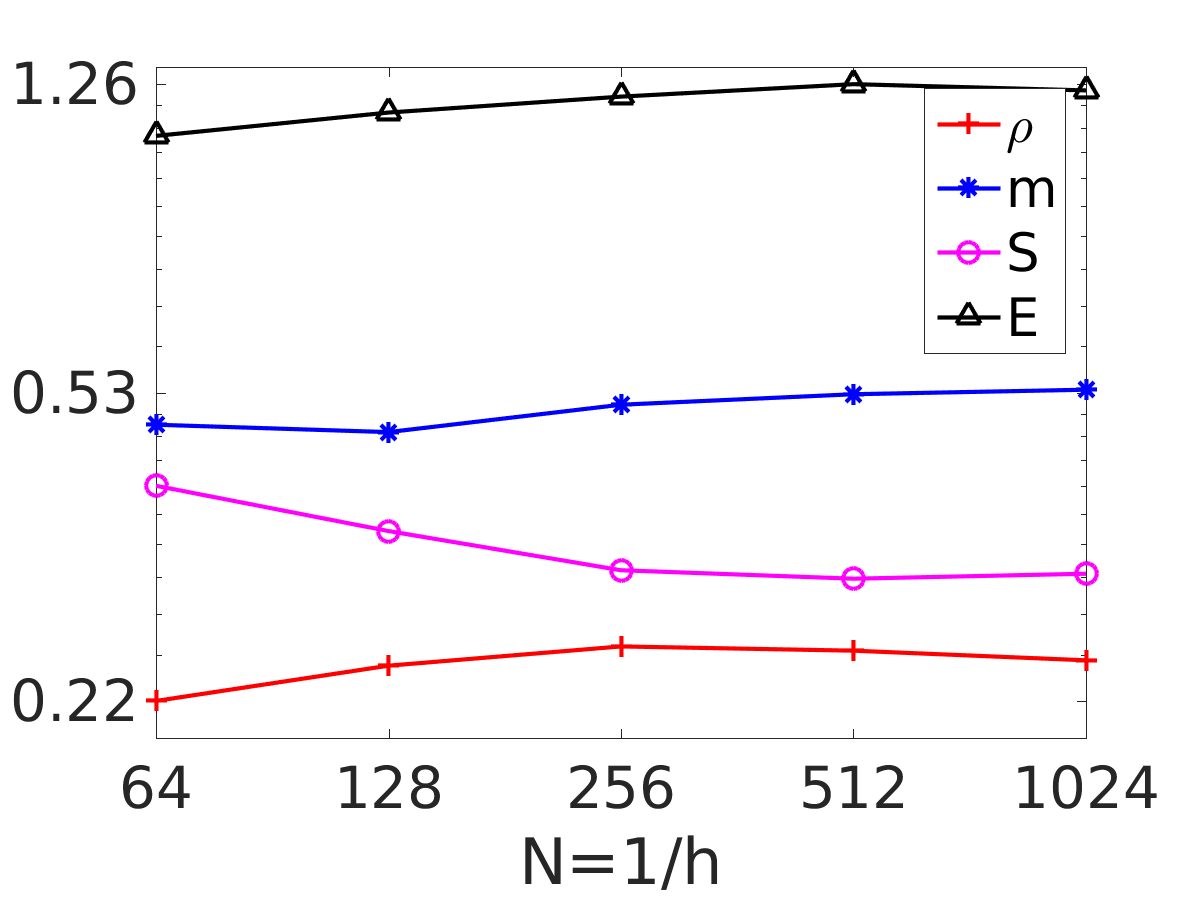}
\includegraphics[width=0.24\textwidth]{./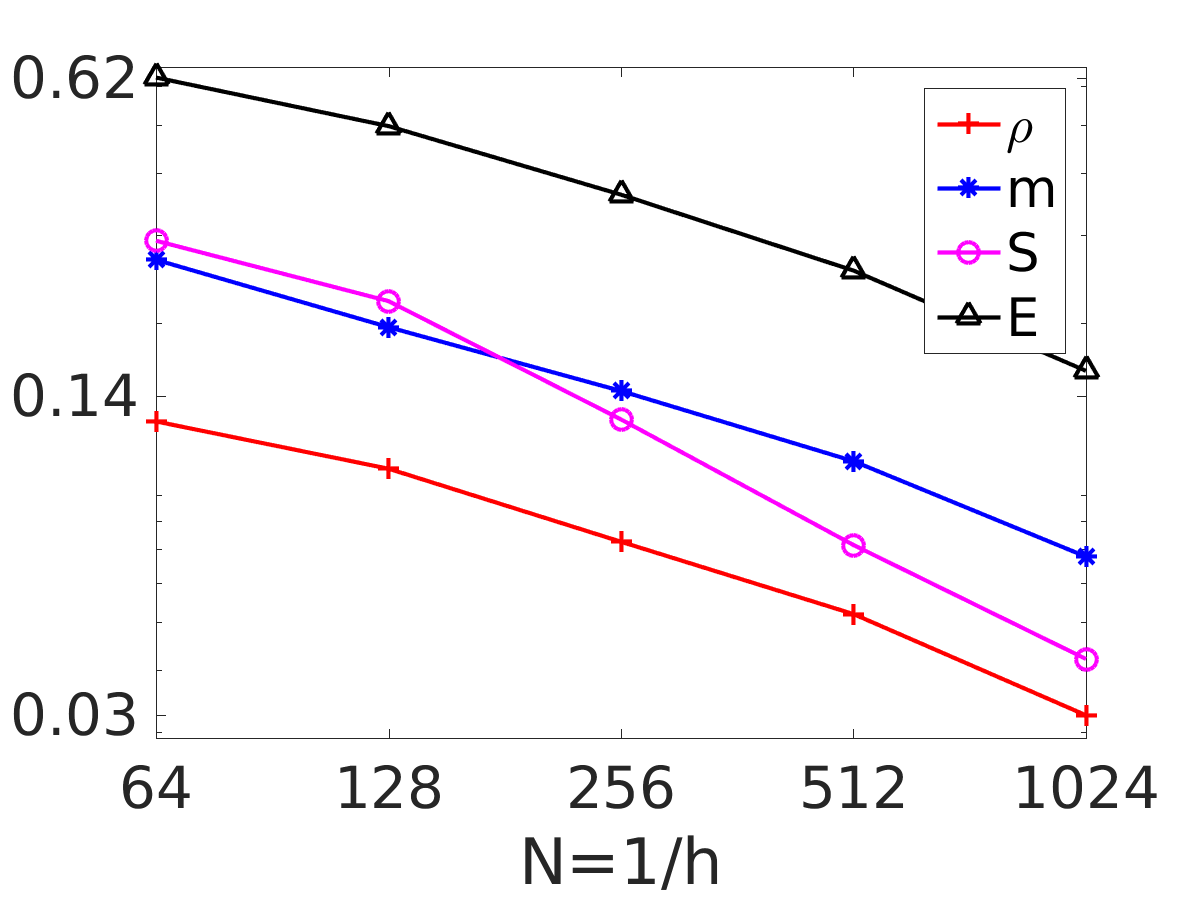}
\includegraphics[width=0.24\textwidth]{./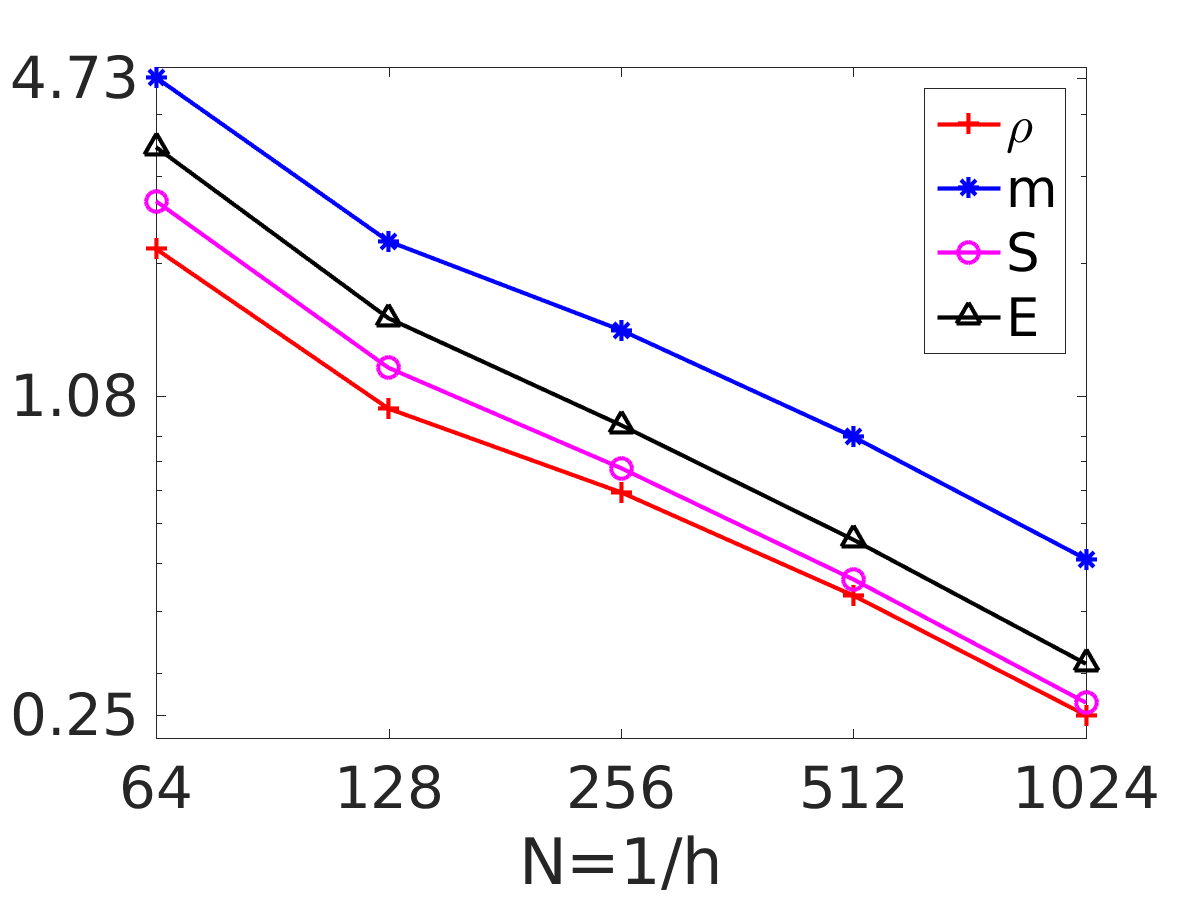}
\includegraphics[width=0.24\textwidth]{./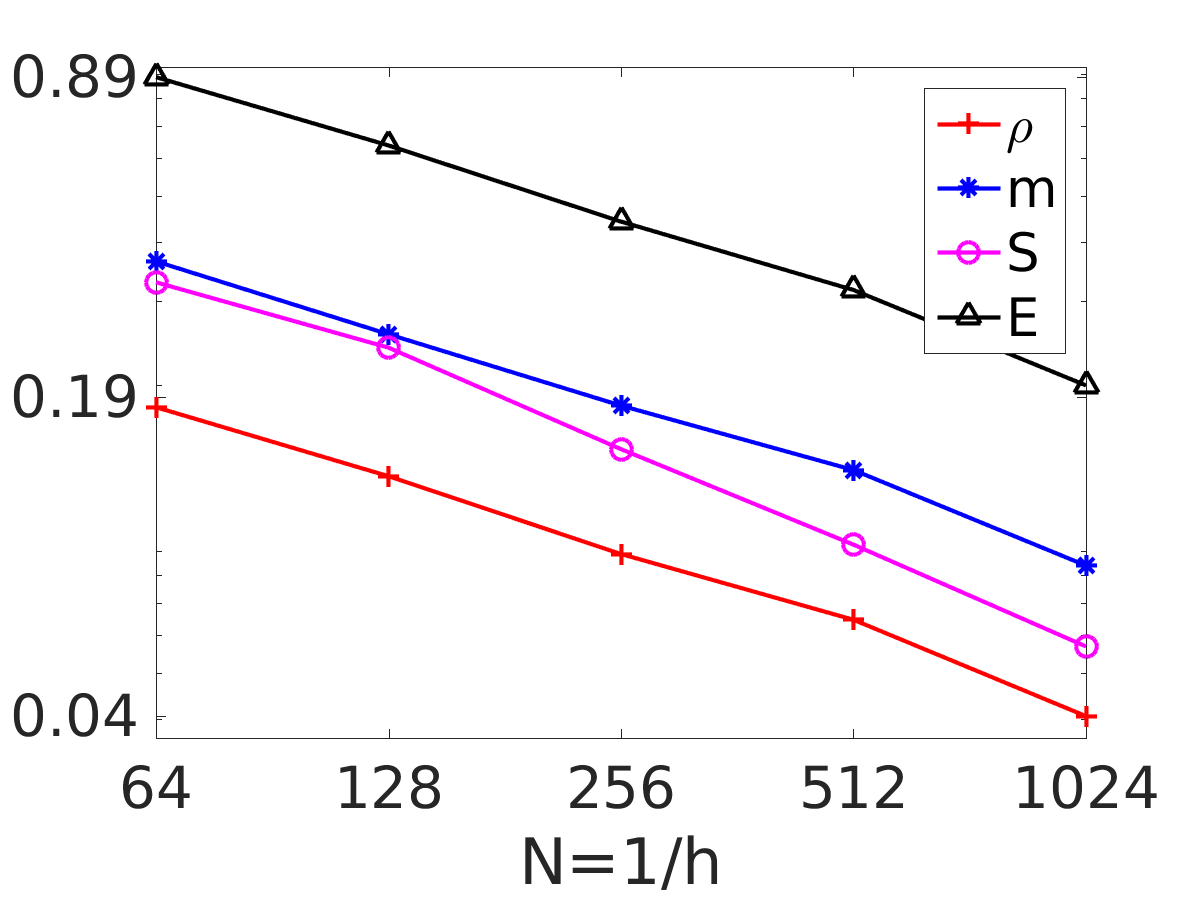}
\caption{FLM scheme }\label{fig8b}
\end{subfigure}
\begin{subfigure}{\linewidth} \centering
\includegraphics[width=0.24\textwidth]{./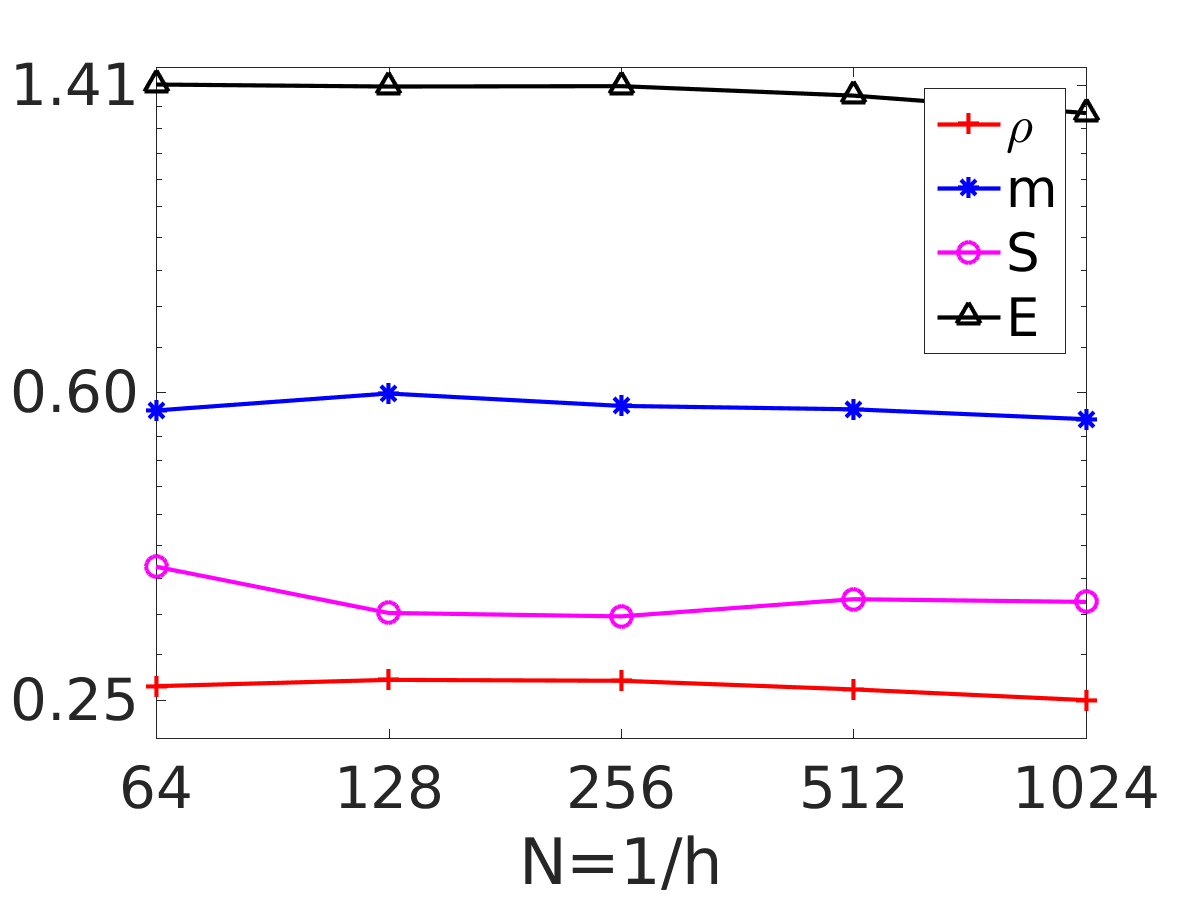}
\includegraphics[width=0.24\textwidth]{./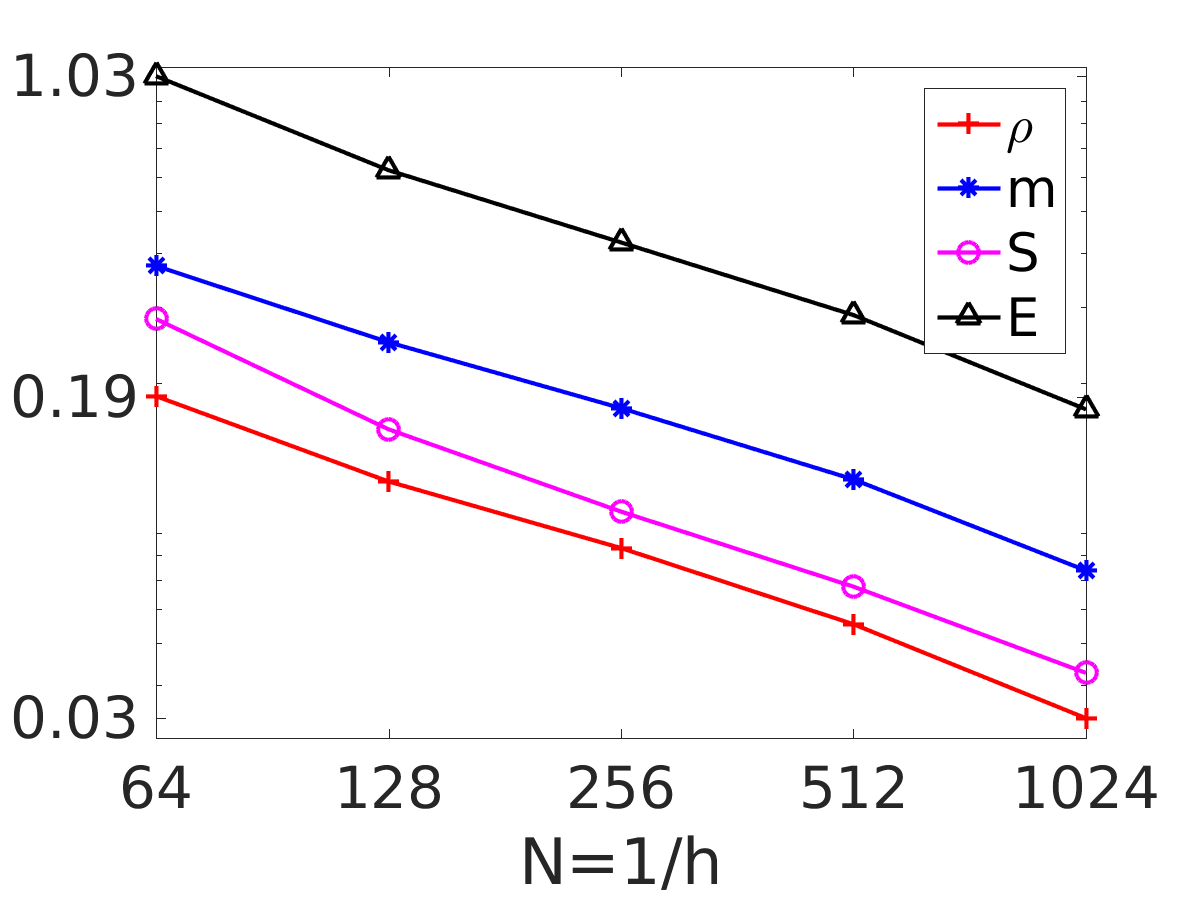}
\includegraphics[width=0.24\textwidth]{./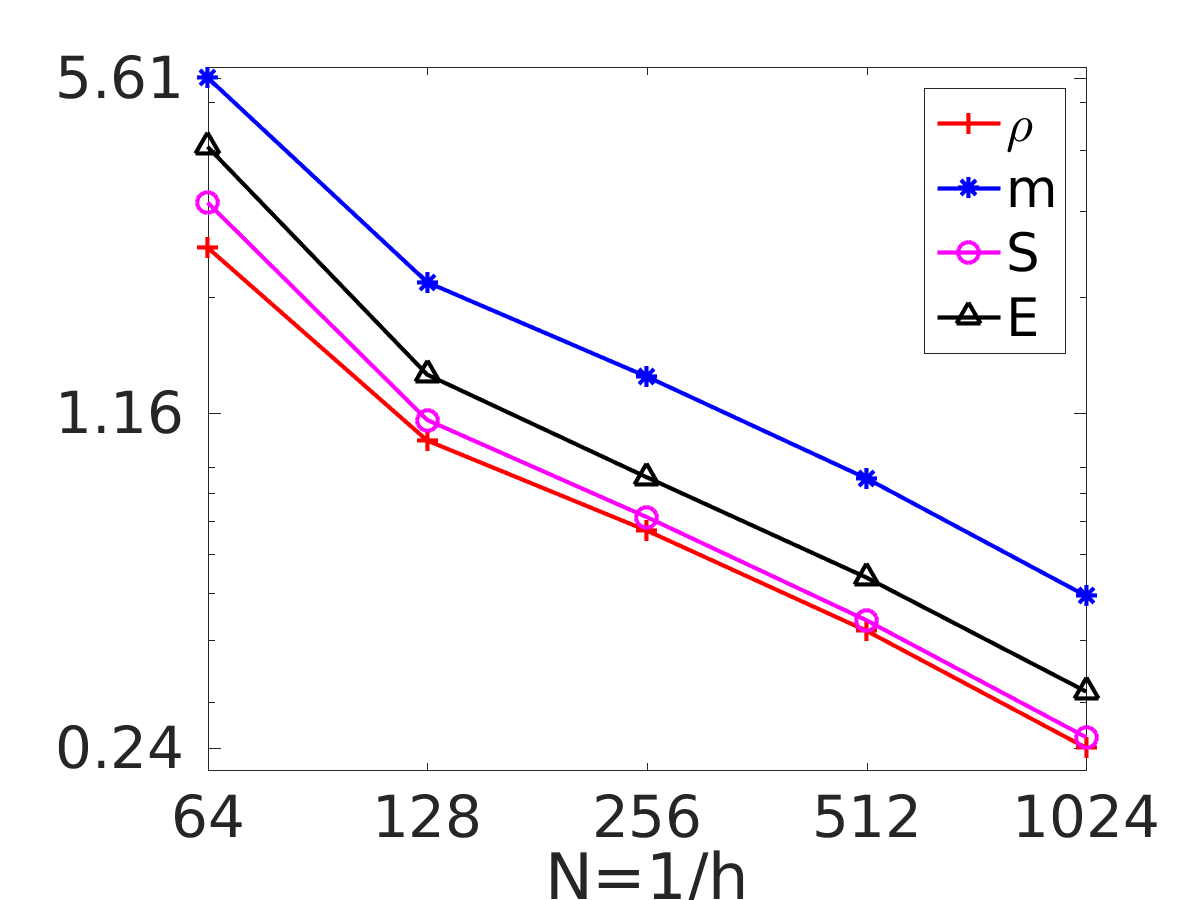}
\includegraphics[width=0.24\textwidth]{./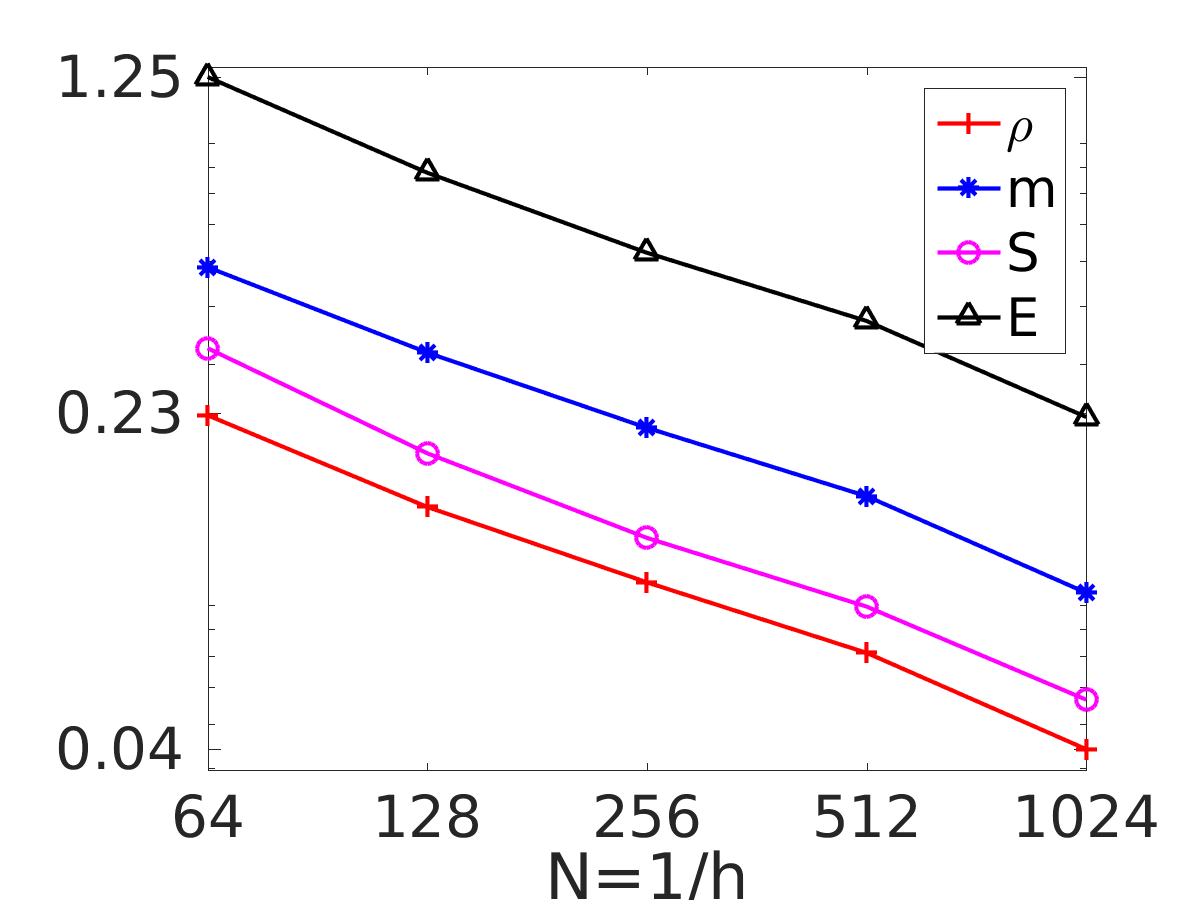}
\caption{upwind FV scheme}\label{fig8c}
\end{subfigure}
\begin{subfigure}{\linewidth} \centering
\includegraphics[width=0.24\textwidth]{./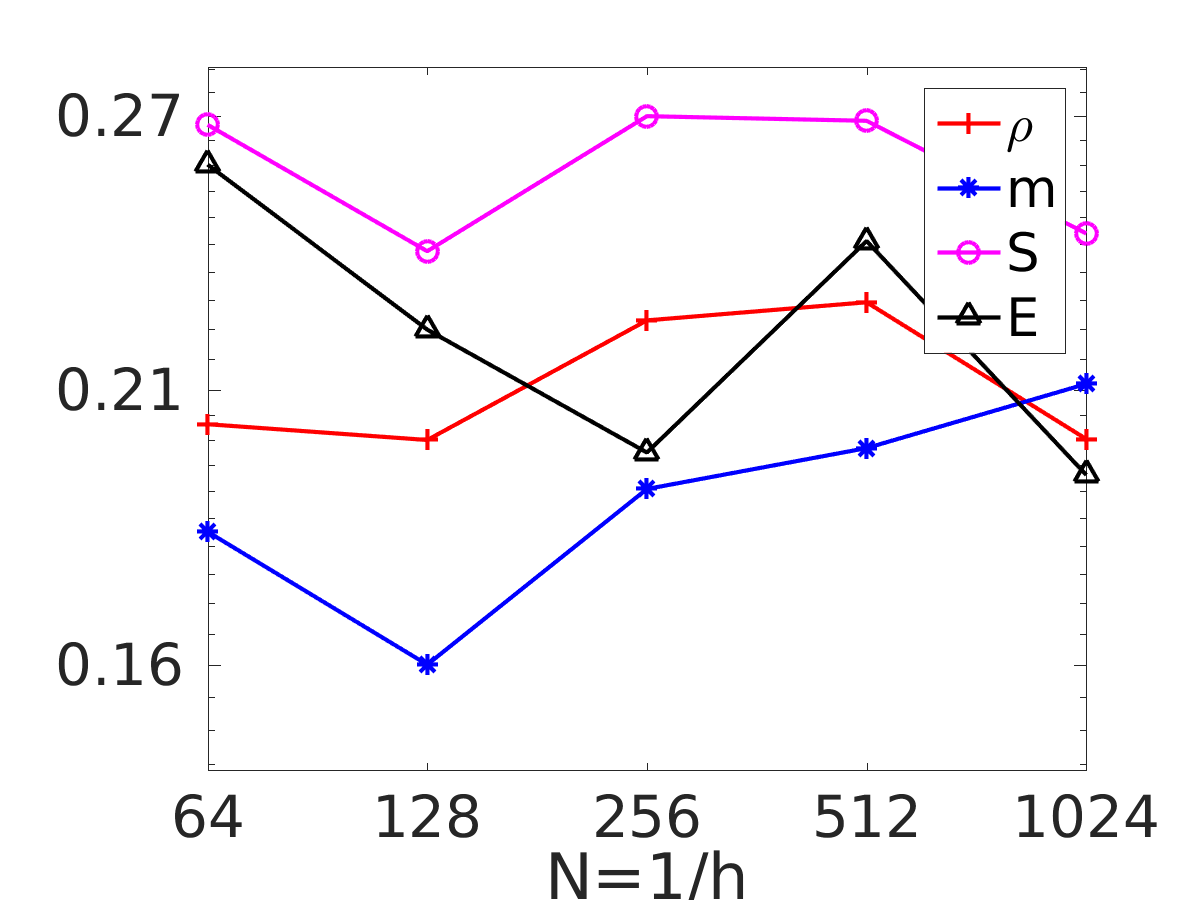}
\includegraphics[width=0.24\textwidth]{./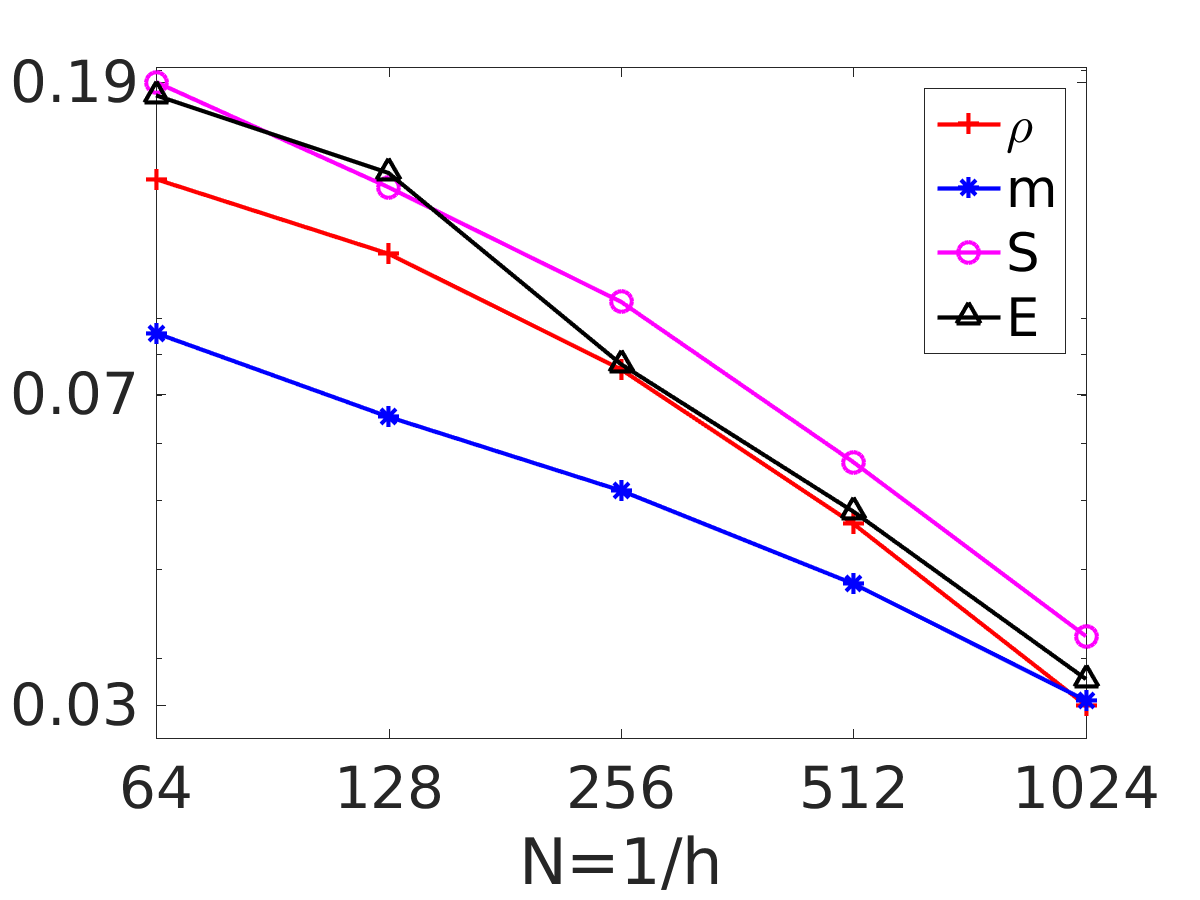}
\includegraphics[width=0.24\textwidth]{./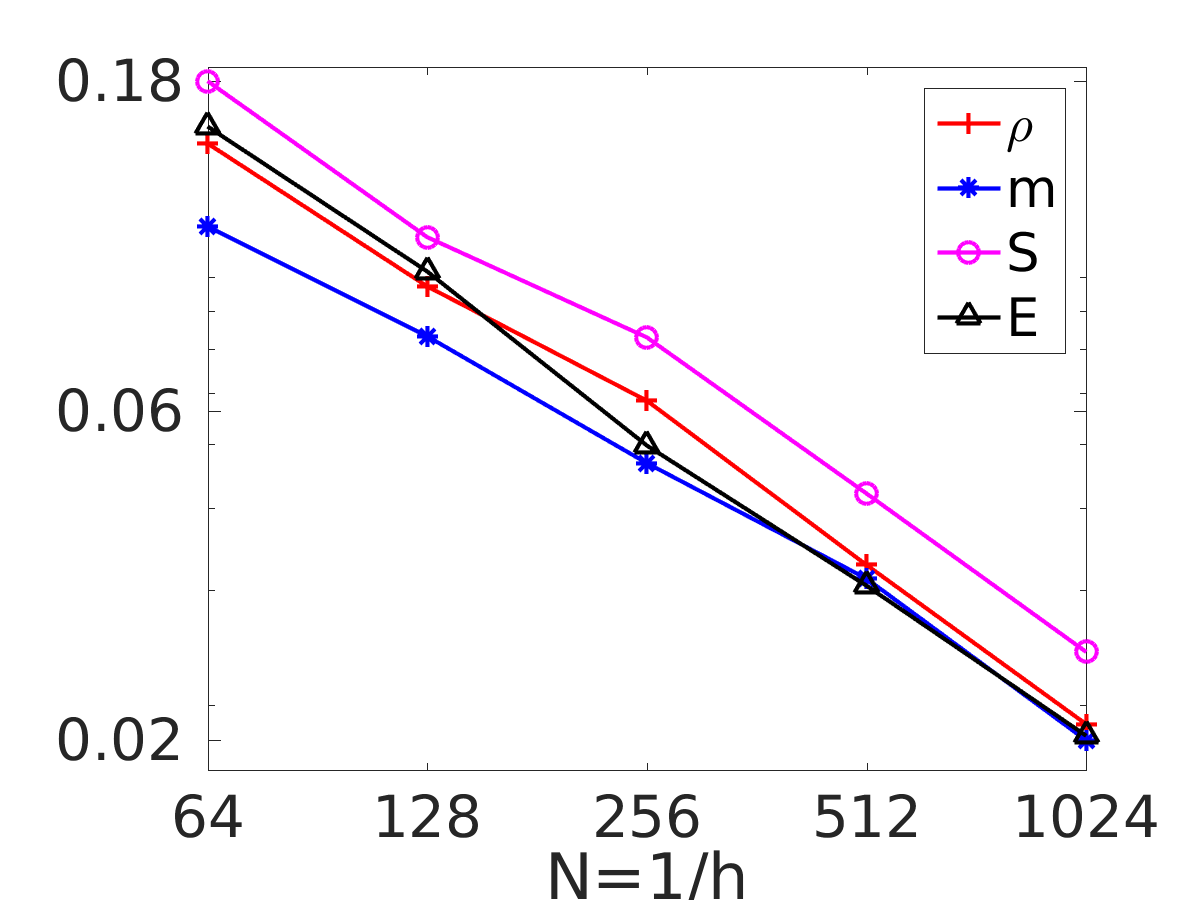}
\includegraphics[width=0.24\textwidth]{./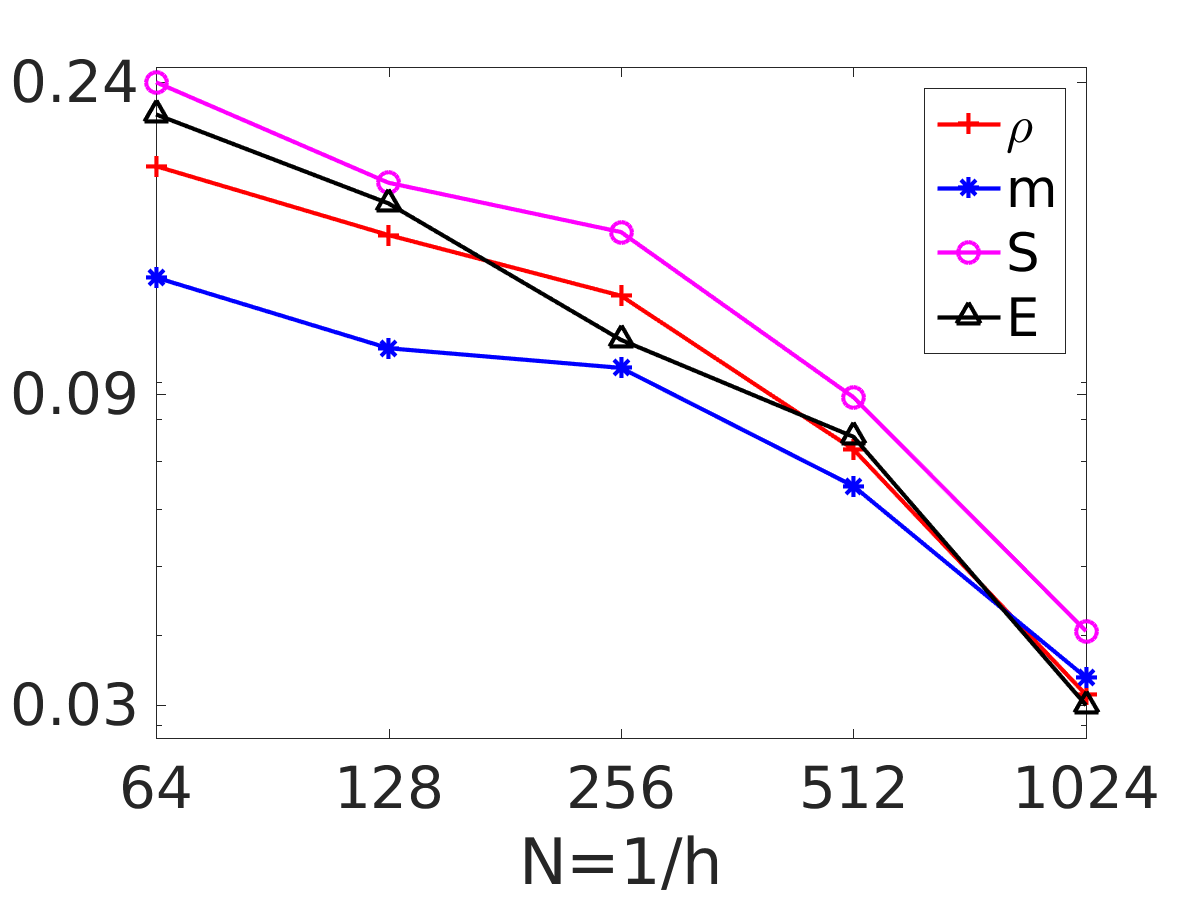}
\caption{GRP scheme}\label{fig8_GRP}
\end{subfigure}
\caption{Experiment~2, convergence study for the Richtmyer-Meshkov problem: $E_1, E_2, E_3$ and $E_4$
errors (left to right). }
\label{fig3}
\end{figure}


\begin{table}[h!]
\caption{Experiment~2, convergence study for the Richtmyer-Meshkov problem: $E_1, E_2, E_3$ and $E_4$
errors for density (left to right).}
\label{tab2}
\begin{tabular}{|c|c|c|c|c|c|c|c|c|}
  \hline
  \multirow{2}{*}{mesh density} & \multicolumn{2}{c|}{$E_1$}  & \multicolumn{2}{c|}{$E_2$} & \multicolumn{2}{c|}{$E_3$} & \multicolumn{2}{c|}{$E_4$} \\
  \cline{2-9}
      & error   & order & error   & order & error   & order  & error   & order \\
   \hline
\multicolumn{9}{c}{(a) \quad FLM  scheme} \\\hline
64	 & 2.19e-01 	 & - 	 & 1.27e-01 	 & - 	 & 2.14e+00 	 & - 	 & 1.80e-01 	 & - 	 \\
128	 & 2.42e-01 	 & -0.14 	 & 1.02e-01 	 & 0.32 	 & 1.02e+00 	 & 1.07 	 & 1.29e-01 	 & 0.48 	 \\
256	 & 2.56e-01 	 & -0.08 	 & 7.25e-02 	 & 0.49 	 & 6.93e-01 	 & 0.56 	 & 8.86e-02 	 & 0.54 	 \\
512	 & 2.53e-01 	 & 0.02 	 & 5.19e-02 	 & 0.48 	 & 4.30e-01 	 & 0.69 	 & 6.45e-02 	 & 0.46 	 \\
1024	 & 2.46e-01 	 & 0.04 	 & 3.25e-02 	 & 0.68 	 & 2.47e-01 	 & 0.80 	 & 4.05e-02 	 & 0.67 	 \\ \hline
\multicolumn{9}{c}{(b) \quad upwind FV scheme} \\\hline
64	 & 2.65e-01 	 & - 	 & 1.86e-01 	 & - 	 & 2.53e+00 	 & - 	 & 2.32e-01 	 & - 	 \\
128	 & 2.70e-01 	 & -0.03 	 & 1.18e-01 	 & 0.66 	 & 1.02e+00 	 & 1.31 	 & 1.47e-01 	 & 0.66 	 \\
256	 & 2.69e-01 	 & 0.00 	 & 8.28e-02 	 & 0.51 	 & 6.69e-01 	 & 0.61 	 & 1.01e-01 	 & 0.54 	 \\
512	 & 2.63e-01 	 & 0.03 	 & 5.52e-02 	 & 0.59 	 & 4.18e-01 	 & 0.68 	 & 7.11e-02 	 & 0.51 	 \\
1024	 & 2.55e-01 	 & 0.04 	 & 3.34e-02 	 & 0.72 	 & 2.41e-01 	 & 0.79 	 & 4.40e-02 	 & 0.69 	 \\ \hline
\multicolumn{9}{c}{(c) \quad GRP scheme} \label{tab2_GRP} \\\hline
64	 & 2.01e-01 	 & - 	 & 1.41e-01 	 & - 	 & 1.44e-01 	 & - 	 & 1.80e-01 	 & - 	 \\
128	 & 1.98e-01 	 & 0.02 	 & 1.11e-01 	 & 0.35 	 & 8.71e-02 	 & 0.72 	 & 1.44e-01 	 & 0.32 	 \\
256	 & 2.23e-01 	 & -0.17 	 & 7.61e-02 	 & 0.54 	 & 5.83e-02 	 & 0.58 	 & 1.19e-01 	 & 0.28 	 \\
512	 & 2.27e-01 	 & -0.03 	 & 4.62e-02 	 & 0.72 	 & 3.27e-02 	 & 0.83 	 & 7.26e-02 	 & 0.71 	 \\
1024	 & 1.98e-01 	 & 0.20 	 & 2.57e-02 	 & 0.85 	 & 1.87e-02 	 & 0.81 	 & 3.30e-02 	 & 1.14 	 \\
\hline
\end{tabular}
\end{table}

\begin{figure}[hb]
\begin{subfigure}{0.245\linewidth} \centering
\includegraphics[width=1\textwidth]{./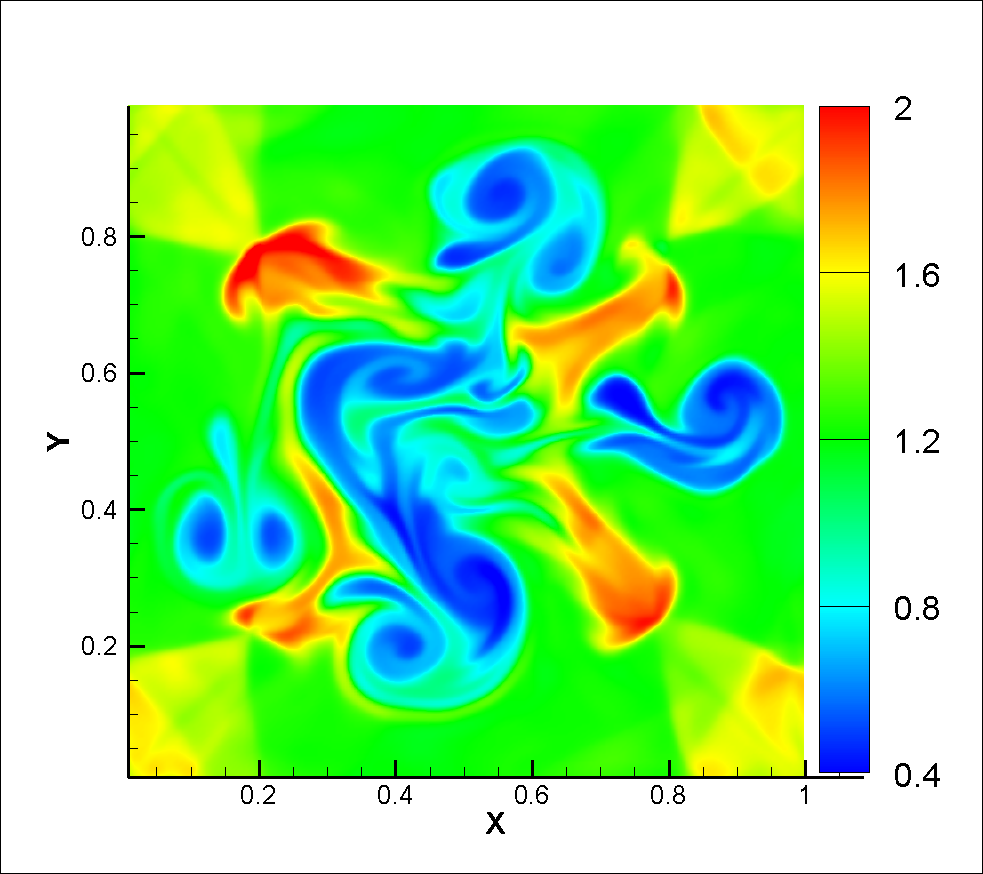}
\caption{$n=256$}
\end{subfigure}
\begin{subfigure}{0.245\linewidth} \centering
\includegraphics[width=1\textwidth]{./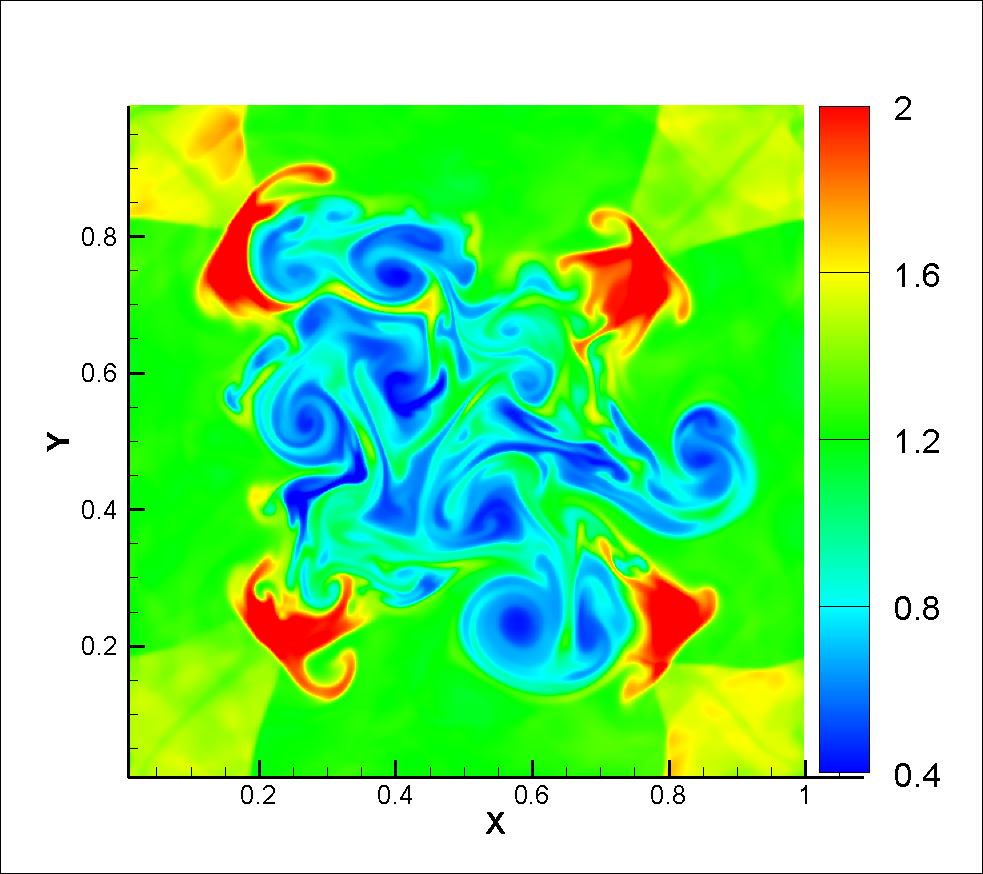}
\caption{$n=512$}
\end{subfigure}
\begin{subfigure}{0.245\linewidth} \centering
\includegraphics[width=1\textwidth]{./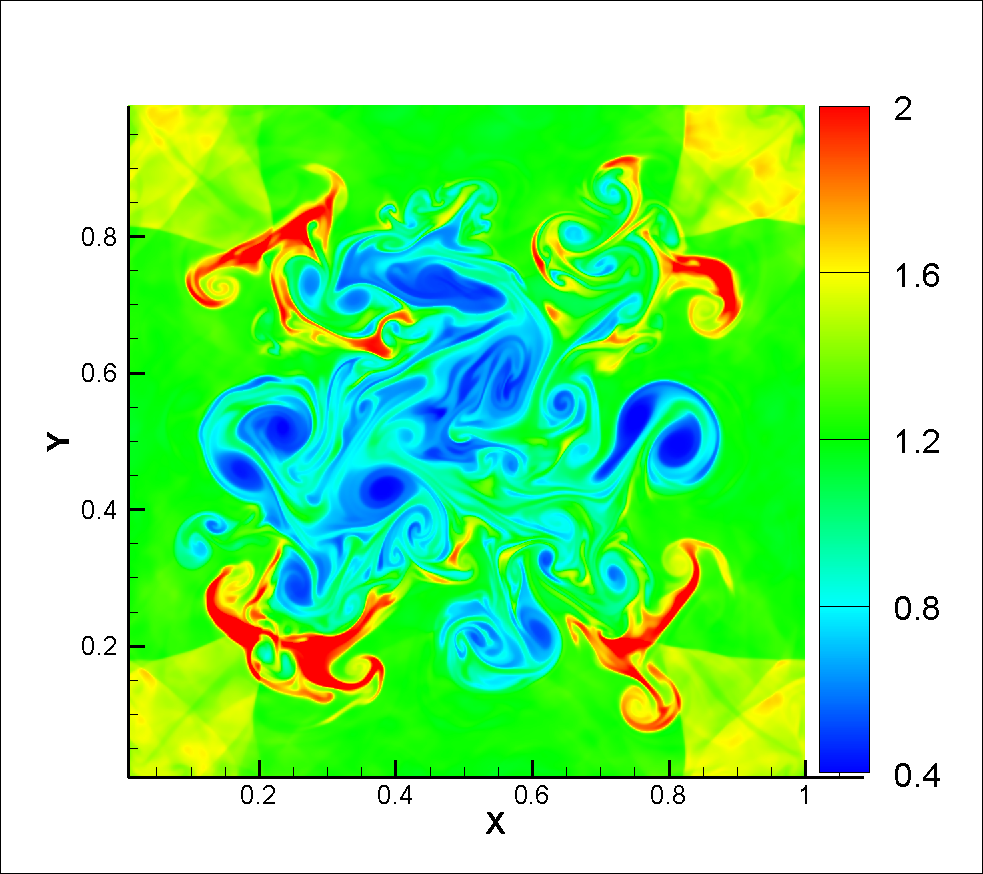}
\caption{$n=1024$}
\end{subfigure}
\begin{subfigure}{0.245\linewidth} \centering
\includegraphics[width=1\textwidth]{./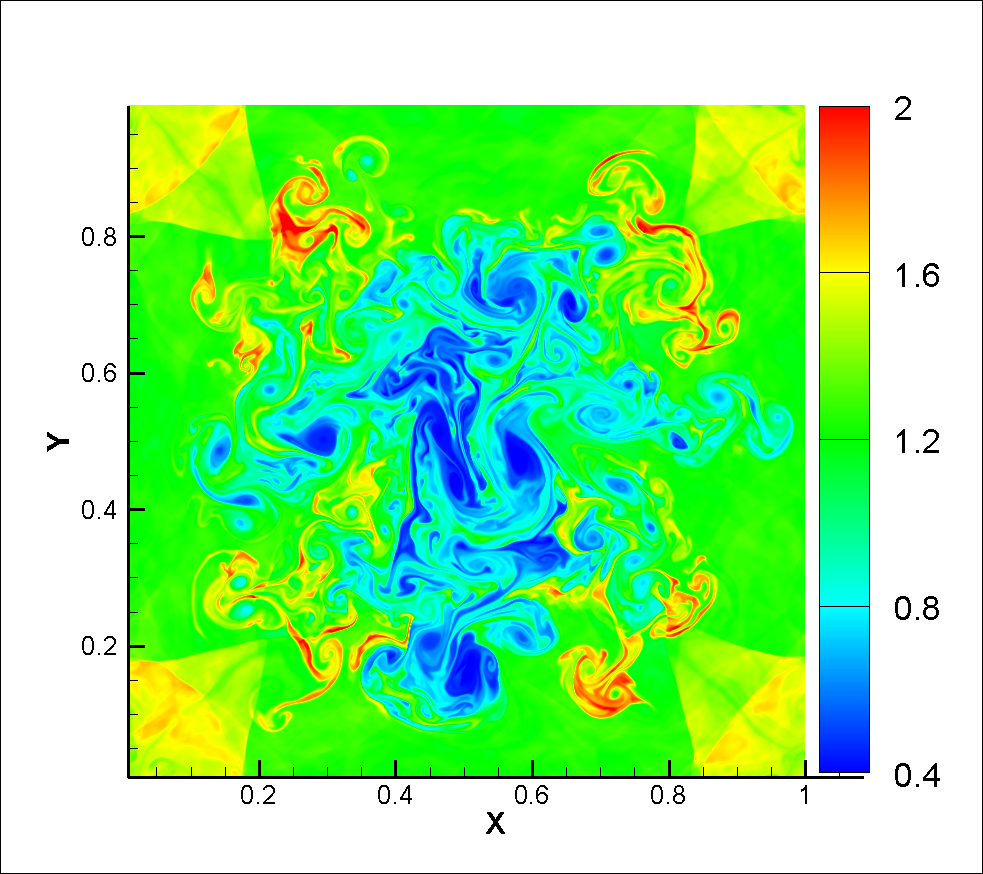}\caption{$n=2048$}
\end{subfigure}
\caption{Experiment~2, density computed by the GRP scheme at $T = 4$ for the Richtmyer-Meshkov
problem on a mesh with $n \times n$ cells.}
\label{fig_w4}
\end{figure}

\begin{figure}[hb]
\begin{subfigure}{0.245\linewidth} \centering
\includegraphics[width=1\textwidth]{./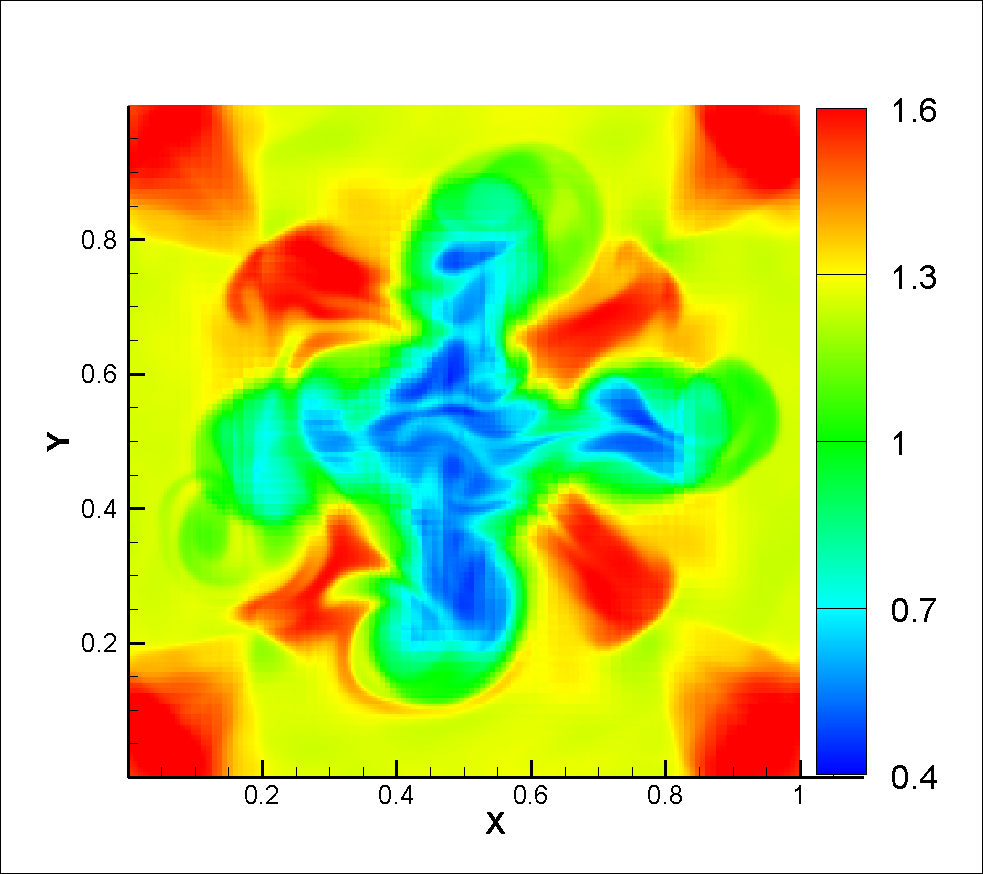}
\caption{$up\ to\ n=256$}
\end{subfigure}
\begin{subfigure}{0.245\linewidth} \centering
\includegraphics[width=1\textwidth]{./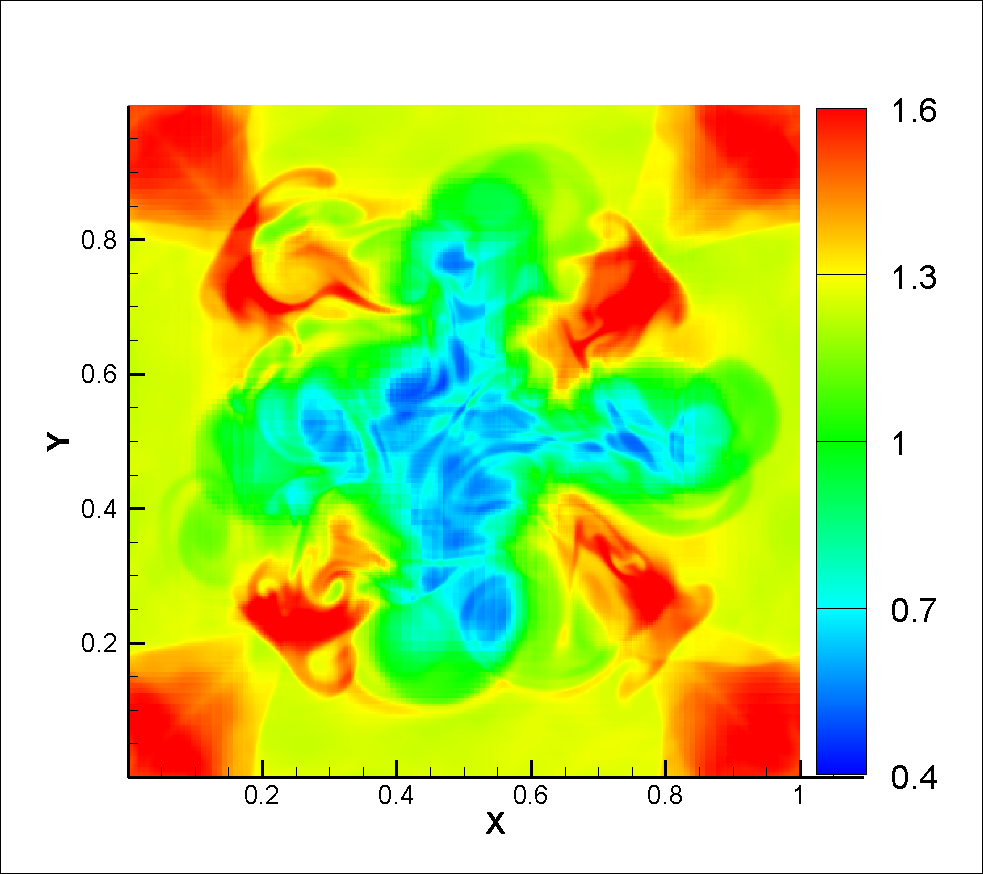}
\caption{$up\ to\ n=512$}
\end{subfigure}
\begin{subfigure}{0.245\linewidth} \centering
\includegraphics[width=1\textwidth]{./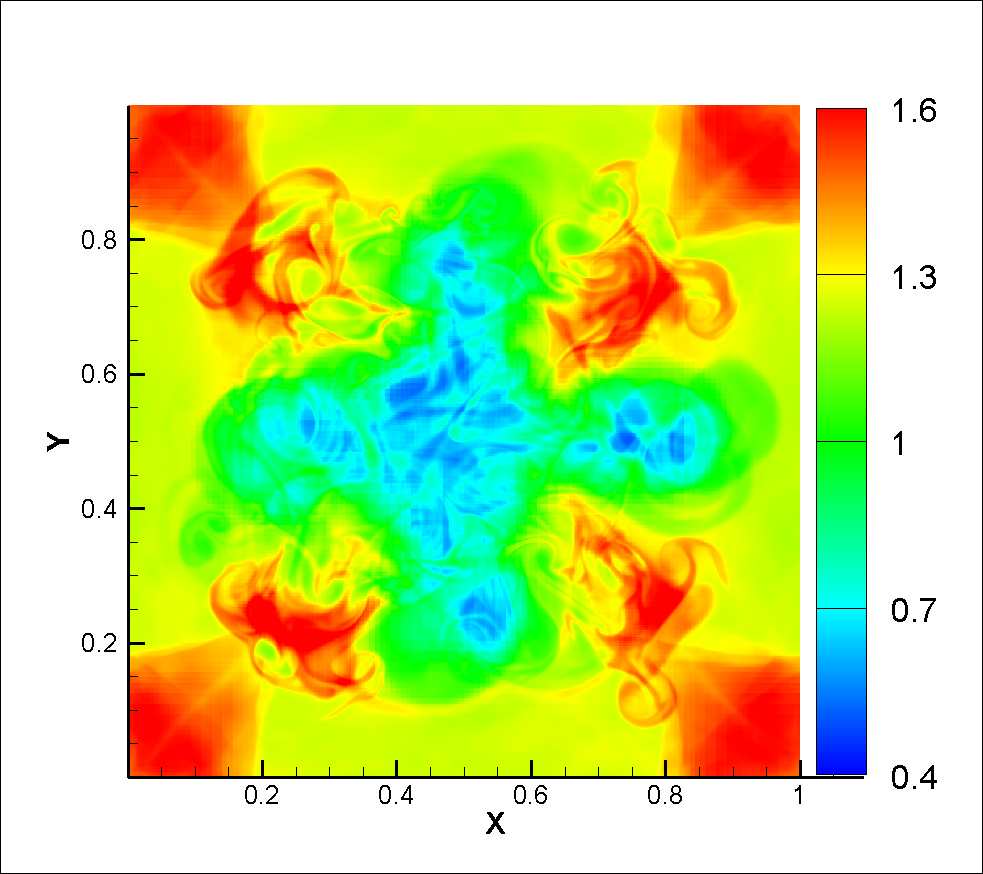}
\caption{$up\ to\ n=1024$}
\end{subfigure}
\begin{subfigure}{0.245\linewidth} \centering
\includegraphics[width=1\textwidth]{./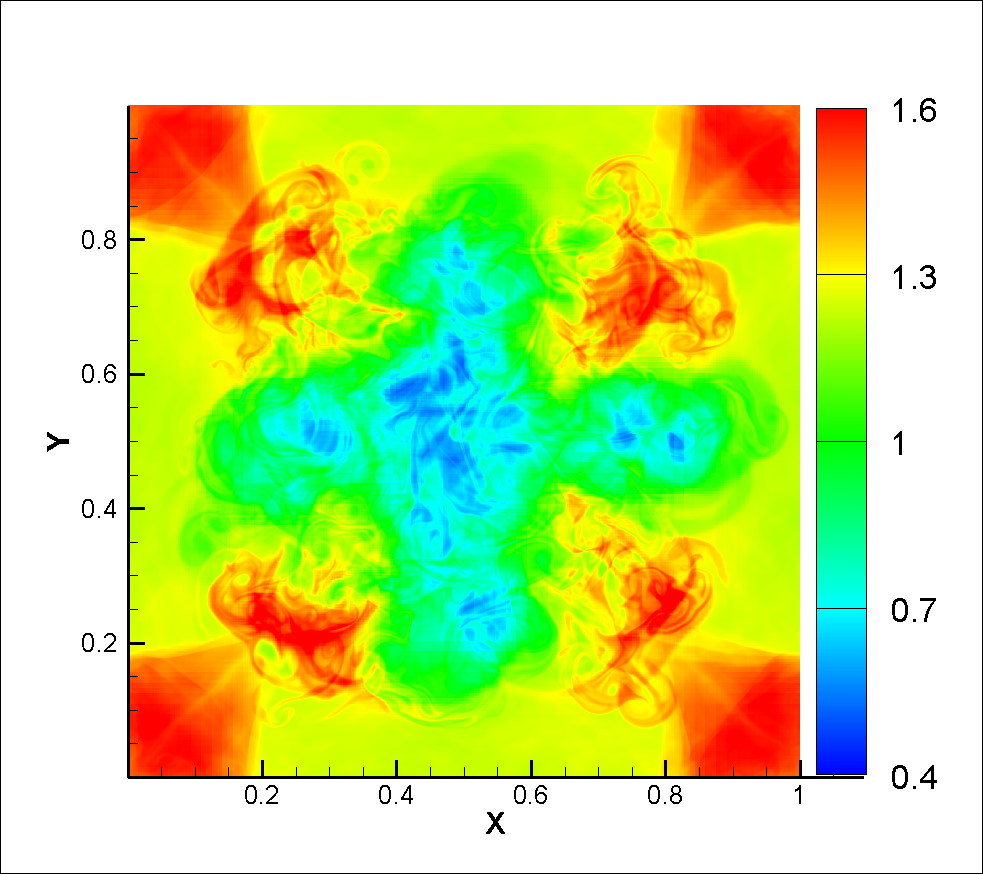}\caption{$up\ to\ n=2048$}
\end{subfigure}
\caption{Experiment~2, Ces\`aro averages of the density computed by the GRP method on  meshes with $j\times j$ cells, $j = 64, 128, \dots, n,$  for the Richtmyer-Meshkov problem.}
\label{fig_w5}
\end{figure}

\begin{figure}[hb]
\begin{subfigure}{0.245\linewidth} \centering
\includegraphics[width=1\textwidth]{./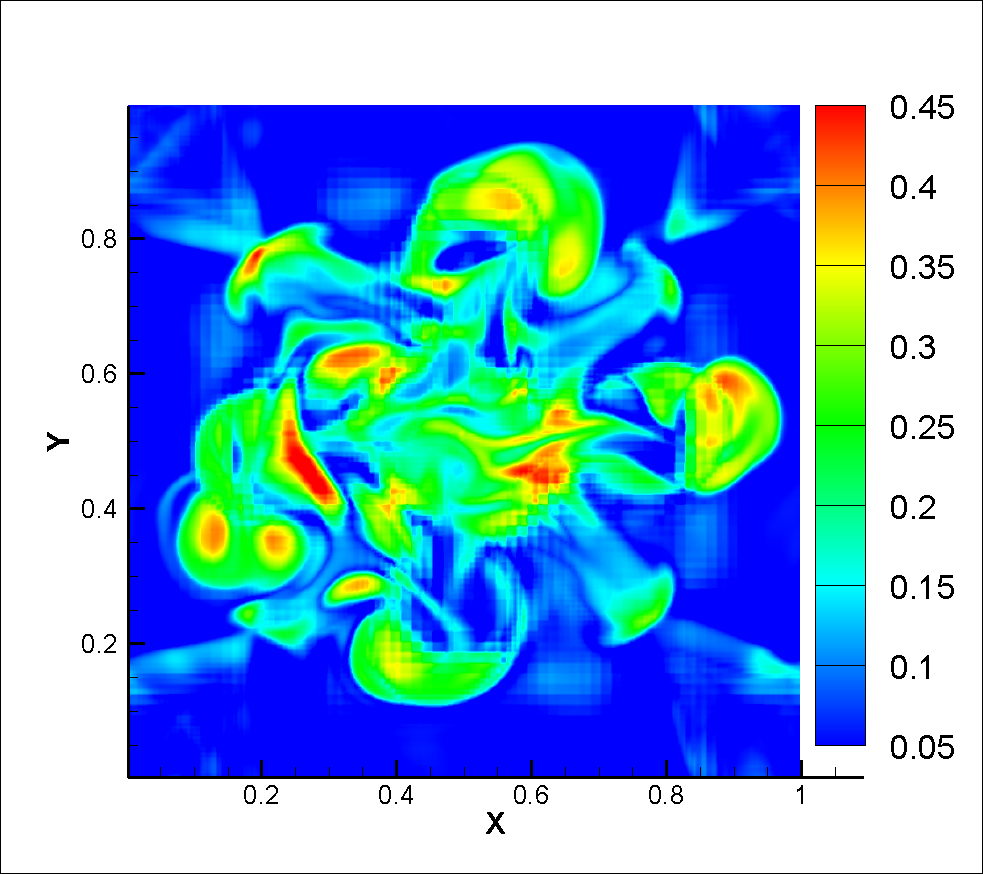}
\caption{$up\ to\ n=256$}
\end{subfigure}
\begin{subfigure}{0.245\linewidth} \centering
\includegraphics[width=1\textwidth]{./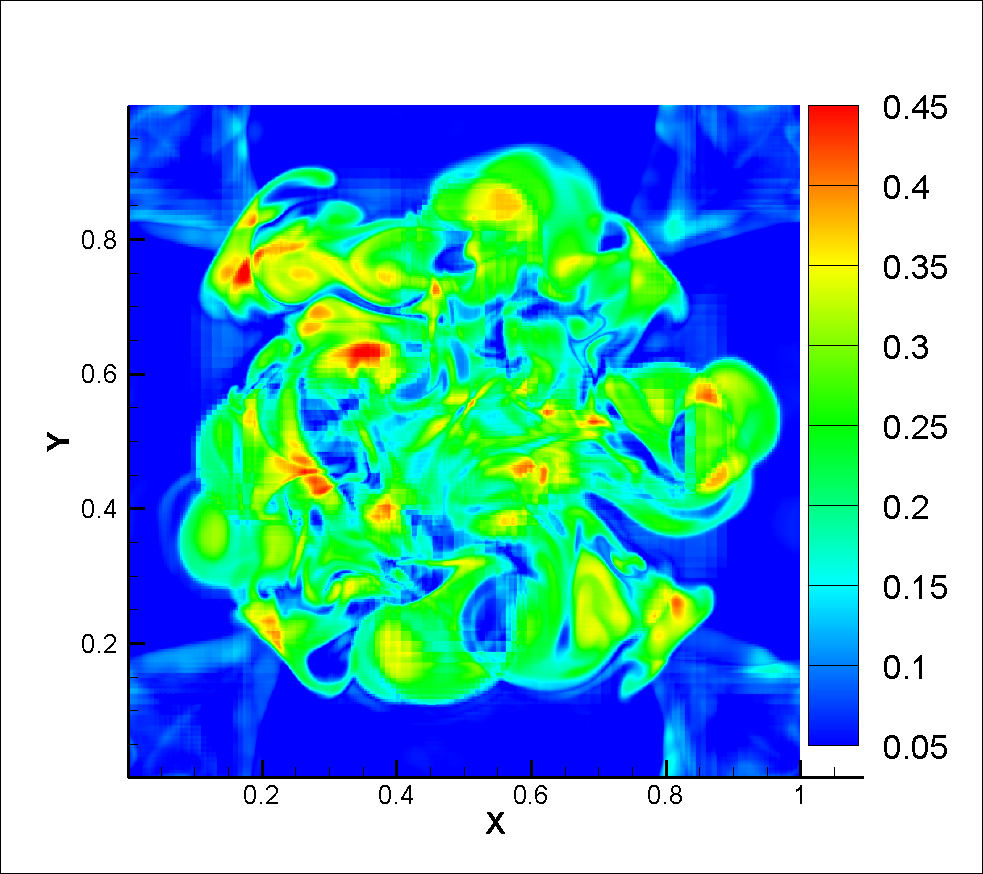}
\caption{$up\ to\ n=512$}
\end{subfigure}
\begin{subfigure}{0.245\linewidth} \centering
\includegraphics[width=1\textwidth]{./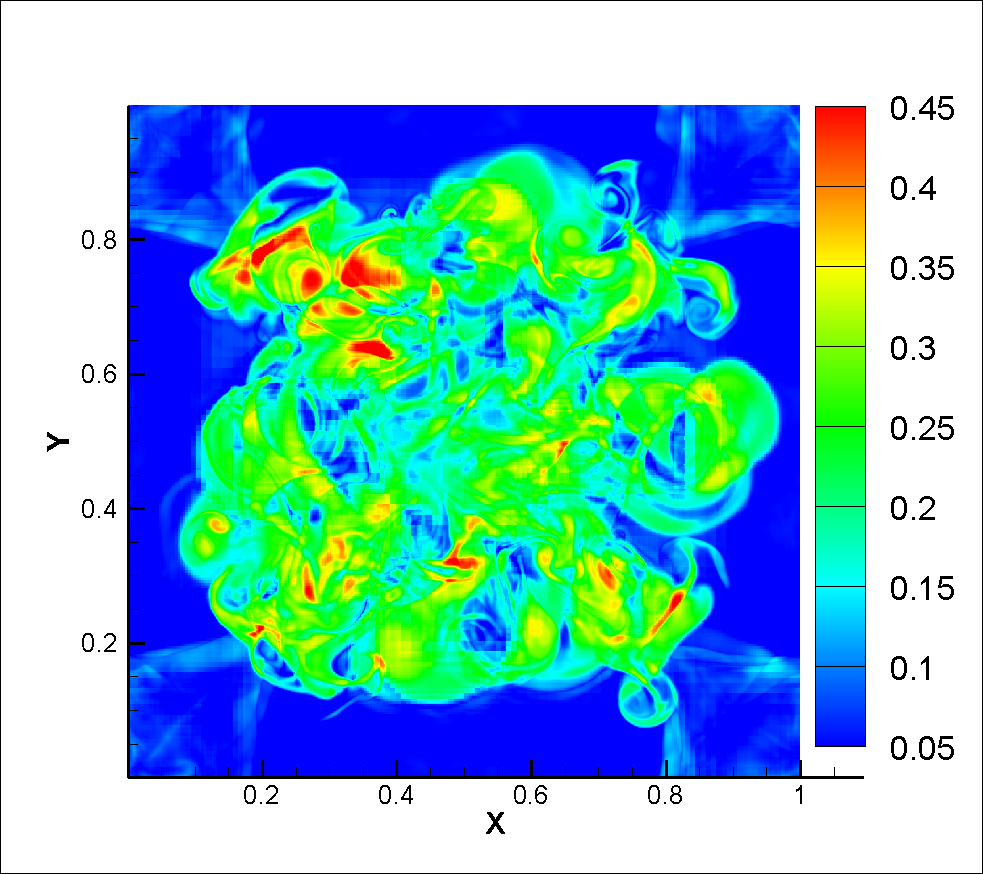}
\caption{$up\ to\ n=1024$}
\end{subfigure}
\begin{subfigure}{0.245\linewidth} \centering
\includegraphics[width=1\textwidth]{./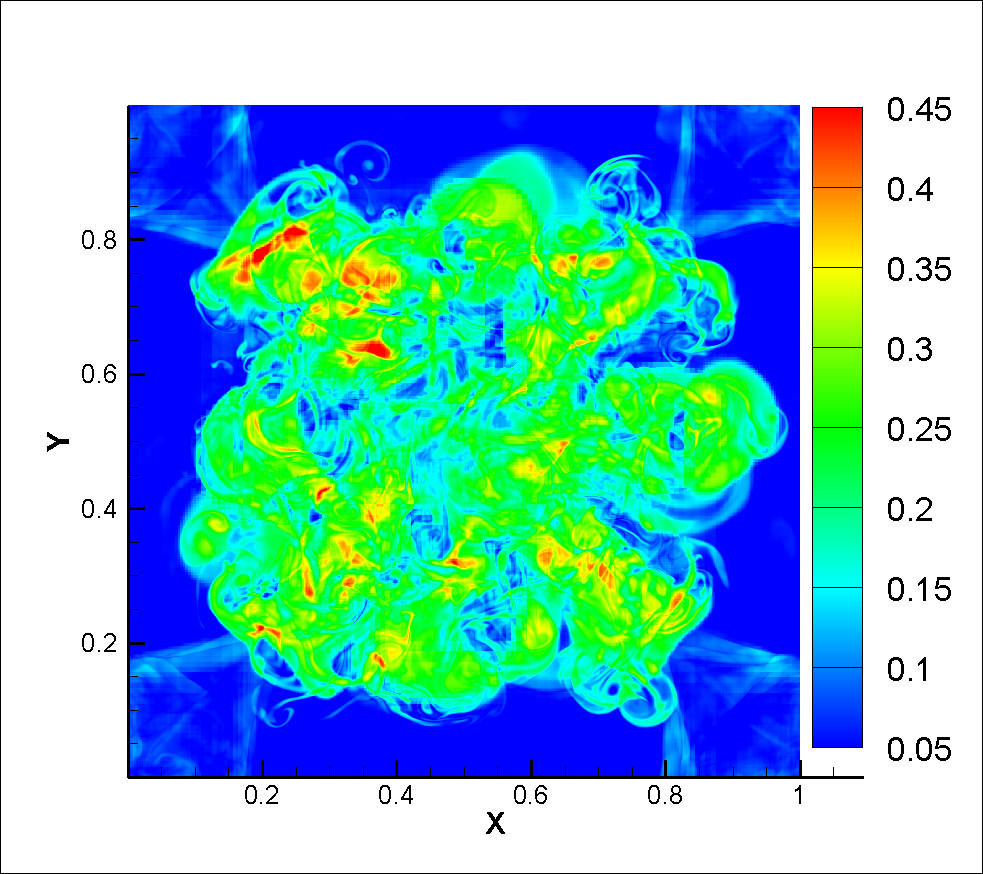}\caption{$up\ to\ n=2048$}
\end{subfigure}
\caption{Experiment~2, first variance of the density computed by the GRP method on  meshes with $j\times j$ cells, $j = 64, 128, \dots, n,$  for the Richtmyer-Meshkov problem.}
\label{fig_w6}
\end{figure}

\clearpage\newpage

\section{Conclusion}

We have developed a new strategy for computing dissipative measure--valued (DMV) solutions of the Euler system of gas dynamics. It is a well-known fact that the Euler equations in two- and three-space dimensions are essentially ill-posed, which is demonstrated by the fact that infinitely many admissible weak solutions exist even for smooth initial data. Therefore the question of the  convergence of numerical schemes, which is studied in the present paper, is of fundamental importance.

We have introduced the concept of consistent approximations and showed that they generate DMV solutions, see Section~\ref{A}.
An example of such a consistent approximation is the FLM finite volume method developed in \cite{FLM_18}, see Section~\ref{Brenner}.
We have also presented an interesting property, that can be seen as a sharper version of the Lax-Wendroff theorem:
a consistent numerical approximations converge either strongly or
their (weak) limit is not a weak solution of the Euler system, see Section~\ref{w}.

In view of these facts a suitable concept of generalized solutions of the Euler system is inevitable.
Applying $\mathcal{K}$-convergence to the Dirac distributions concentrated on the  numerical solutions we showed in Section~\ref{K} that a limiting DMV solution can be effectively computed by the Ces\`aro averages of the consistent approximations.
To this goal, we have studied the convergence of observable quantities, such as the expected value and the first variance, see \eqref{K2}, \eqref{K3}, \eqref{K5}, \eqref{K6}, \eqref{K51},
and showed that the density, momentum and the entropy converge strongly in $L^q((0,T) \times \Omega)$ for  some $q \geq 1$. Note that the energy converges {a.e.} in $(0,T) \times \Omega$.

As an added benefit, we have also obtained that the Ces\`aro averages of the Dirac distributions converge in the Wasserstein distance $W_q$ and  strongly in $L^q((0,T) \times \Omega)$, for some $q \geq 1,$ see Theorem~\ref{Tmain}.

In Section~\ref{Brenner} we have described two numerical schemes for the Euler equations, the FLM finite volume method \cite{FLM_18} and the standard second order GRP method \cite{ben2003, ben2006 ,ben2007}.  Both methods  confirm our theoretical results on $\K$-convergence. This is demonstrated in Section~\ref{numerics}, where we solve numerically two well-known benchmarks, the Kelvin-Helmholtz and the Richtmyer-Meshkov instability problems. As expected,
single solutions computed by both methods do not converge due to new fine scale structures arising on refined meshes. On the other hand, we have
demonstrated $\K$-convergence of the corresponding Dirac distributions as well as the strong convergence of Ce\`saro averages of the numerical solutions and their first variance.

$\K$-convergence can be seen as a new tool in numerical analysis of ill-posed partial differential equations. In particular, it can  be applied to other well-established numerical schemes for the Euler equations in order to study their convergence. In future it will be interesting to extend the concept of  dissipative measure--valued solutions and $\K$-convergence to general hyperbolic conservation laws.

\end{document}